\documentclass[12pt]{amsart}
\usepackage[utf8]{inputenc}
\usepackage[english]{babel}
\usepackage{amssymb,amsmath,amsthm,amsfonts,xcolor,enumerate,longtable}
\usepackage{times}
\usepackage{cite}
\usepackage[mathcal]{euscript}
\usepackage{hyperref}

\newtheorem{theorem}{Theorem}
\newtheorem{lemma}[theorem]{Lemma}
\newtheorem{proposition}[theorem]{Proposition}
\newtheorem{remark}[theorem]{Remark}

\newtheorem{definition}[theorem]{Definition}

\begin{document}

\title[Non-split central extensions of naturally graded 2-filiform Leibniz algebras]
{Classification of all non-split central extensions of naturally graded 2-filiform non-Lie Leibniz algebras}

\author{Kobiljon Abdurasulov}
\address{[Kobiljon Abdurasulov] University of Beira Interior, Rua Marquês D'Ávila e Bolama, 6201-001, Covilhã, Portugal}
\email{abdurasulov0505@mail.ru}

\author{Jobir Adashev}
\address{[Jobir Adashev] Institute of Mathematics, Uzbekistan  Academy of  Sciences, 9 University street, 100174, Tashkent,  Uzbekistan}
\email{adashevjq@mail.ru, adashevjq@gmail.com}

\begin{abstract} This paper deals with the naturally graded 2-filiform non-Lie Leibniz algebras over the field of complex numbers. There are two finite series of such algebras, defined in every dimension starting from five, and two further algebras which occur only in dimension five. For each of these algebras we classify, up to isomorphism, all of its non-split central extensions, of every possible dimension. The one-dimensional central extensions are already known from an earlier work, so the main part of the paper is devoted to the extensions of dimension two and higher. As usual, the classification is reduced to the description of the orbits of the automorphism group of the algebra on the subspaces of its second cohomology space with trivial coefficients. For each algebra and each dimension we give the complete list of these orbits together with the multiplication tables of the corresponding algebras, and we point out the extensions whose annihilator coincides with the added central ideal. Among the algebras obtained there are 2-filiform Leibniz algebras which are not naturally graded; this class of algebras has not been systematically studied so far.
\end{abstract}

\subjclass {17A32, 17B30.}
\keywords{Leibniz algebra, 2-filiform algebra, naturally graded, characteristic sequence, 2-cocycles, central extension.}

\maketitle

\section{Introduction}

The idea of a central extension is one of the basic tools of the structure theory of algebras. A central extension of an algebra is a larger algebra which contains a central ideal (an ideal annihilated by all products) such that the quotient by this ideal recovers the original algebra. Central extensions are also important outside algebra: in quantum mechanics a symmetry of a classical system is, in general, realized on the quantum level only projectively, and as a consequence the symmetry group and the symmetry Lie algebra of a quantized system are central extensions of the classical ones. Standard examples are the Heisenberg algebra, which is a central extension of a commutative Lie algebra, and the Virasoro algebra, which is the universal central extension of the Witt algebra \cite{bauerle,cahen}. Such centrally extended algebras appear in conformal field theory and in string theory.

Leibniz algebras are a non-antisymmetric generalization of Lie algebras: the Leibniz identity says that right multiplications act as derivations, and every Lie algebra is a Leibniz algebra. A large part of the structure theory of Leibniz algebras consists of extending classical results on Lie algebras, and in the nilpotent case one of the most fruitful approaches is to study algebras with a large nilindex and their natural gradation. Filiform and 2-filiform Leibniz algebras are defined by means of the characteristic sequence, and the naturally graded algebras of these classes were described in \cite{Omirov1,Comm-2-fil}.

The method of central extensions goes back to Skjelbred and Sund \cite{ss78}, who used it to classify nilpotent Lie algebras of small dimension. The key observation is that every nilpotent algebra with a non-zero annihilator is a central extension of a nilpotent algebra of smaller dimension, and that the isomorphism classes of such extensions correspond to the orbits of the automorphism group of the smaller algebra on the Grassmannians of its second cohomology space. This approach has been developed and applied to many classes of Lie and non-Lie algebras, see \cite{hac16,ha17,zusmanovich,klp20}. In particular, central extensions of filiform and null-filiform Leibniz algebras were studied in \cite{is11,omirov}, of filiform associative algebras in \cite{kkl18}, and of filiform Zinbiel algebras in \cite{Comm-ex-Zin}.

The one-dimensional central extensions of the naturally graded 2-filiform non-Lie Leibniz algebras were studied in \cite{Adashev2023}. In the present paper we go further and classify all non-split central extensions of these algebras, of every dimension. Namely, we consider the two series $\mu_n^1$, $\mu_n^2$ $(n\geq5)$ and the two five-dimensional algebras $\mu_5^3$, $\mu_5^4$, and for each of them we determine, up to isomorphism, the central extensions of all dimensions $s$ for which the second cohomology space admits $s$-dimensional subspaces, that is, $s\leq5$ for $\mu_n^1,\mu_n^2$ and $s\leq4$ for $\mu_5^3,\mu_5^4$. For $s=1$ we recall the results of \cite{Adashev2023}; the cases $s\geq2$ are new. In every case we describe the orbits of the automorphism group on the Grassmannian of the second cohomology space, select the orbits which give extensions without central components and with annihilator equal to the added central ideal, and list the multiplication tables of the resulting algebras in the Appendix.

\section{Preliminaries}

In this section we recall some basic notions and concepts used throughout the paper.

\begin{definition}
A vector space with a bilinear bracket $(\mathcal{L},[\cdot,\cdot])$
is called a Leibniz algebra if for any $x,y,z\in \mathcal{L}$ the so-called
Leibniz identity
\[\big[x,[y,z]\big]=\big[[x,y],z\big]-\big[[x,z],y\big]\]
holds.
\end{definition}

For a Leibniz algebra $\mathcal{L}$ we consider the series
\[
\mathcal{L}^1=\mathcal{L}, \qquad \mathcal{L}^{i+1}=[\mathcal{L}^i, \mathcal{L}], \qquad i\geq 1.
\]

We say that  a Leibniz algebra $\mathcal{L}$ is {\it nilpotent} if $\mathcal{L}^{k}=\{0\}$ for some $k \in \mathbb{N}$. The smallest integer $k$ satisfying $\mathcal{L}^{k}=\{0\}$ is called the  {\it index of nilpotency} or {\it nilindex} of $\mathcal{L}$.

Evidently, for an $n$-dimensional nilpotent Leibniz algebra $\mathcal{L}$ we have $\mathcal{L}^{n+1}=\{0\}.$ An $n$-dimensional Leibniz algebra $\mathcal{L}$ is called null-filiform if the algebra has a maximum nilpotency index. For a nilpotent algebra, the condition of null-filiformity is equivalent to the condition that the algebra is one-generated.

\begin{definition} Given a nilpotent Leibniz algebra $\mathcal{L}$, put
$\mathcal{L}_i=\mathcal{L}^i/\mathcal{L}^{i+1}, \ 1 \leq i\leq k-1$, and ${\rm gr} \mathcal{L} = \mathcal{L}_1 \oplus
\mathcal{L}_2\oplus\dots \oplus \mathcal{L}_{k-1}$. Then $[\mathcal{L}_i,\mathcal{L}_j]\subseteq \mathcal{L}_{i+j}$ and we
obtain the graded algebra ${\rm gr} \mathcal{L}$. If ${\rm gr} \mathcal{L}$ and $\mathcal{L}$ are isomorphic,
denoted by ${\rm gr} \mathcal{L}\cong \mathcal{L}$, we say that the algebra $\mathcal{L}$ is naturally
graded.
\end{definition}

For any element $x$ of $\mathcal{L}$ we define the  right multiplication operator as
\[R_x \colon  \mathcal{L} \rightarrow \mathcal{L},  \quad z \mapsto [z,x], \ z\in\mathcal{L}.\]

Let $x\in\mathcal{L}\setminus\mathcal{L}^2$ and for the nilpotent right multiplication operator $R_x$, let us define the decreasing sequence $C(x)=(n_1,n_2, \dots, n_k)$ that consists of the dimensions of the Jordan blocks of the operator $R_x$. Endow the set of these sequences with the lexicographic order, i.e. $C(x)=(n_1,n_2, \dots, n_k)\leq C(y)=(m_1,m_2, \dots, m_s)$ means that there is an $i\in\mathbb{N}$ such that $n_j=m_j$ for all $j<i$ and $n_i<m_i$.

\begin{definition} The sequence $C(\mathcal{L})=\max\limits_{x\in\mathcal{L}\setminus\mathcal{L}^2}C(x)$ is defined to be the characteristic sequence of the Leibniz algebra $\mathcal{L}$.
\end{definition}

As in the case of Lie algebras, it can be shown that the characteristic
sequence is an isomorphism invariant.

\begin{definition}
A Leibniz algebra $\mathcal{L}$ is called 2-filiform if $C(\mathcal{L})=(n-2,1,1).$
\end{definition}

For a naturally graded 2-filiform nonsplit non-Lie Leibniz algebra the following result holds \cite{Comm-2-fil}.

\begin{theorem} \label{thm2.4} Let $\mathcal{L}$ be an $n$-dimensional ($n\geq 6$) naturally graded 2-filiform nonsplit non-Lie Leibniz
algebra. Then $\mathcal{L}$ is isomorphic to one of the following pairwise
nonisomorphic algebras:
\[\mu_n^1: \quad [e_i,e_1]=e_{i+1}, \  1\leq i \leq {n-3}, \ [e_1,e_{n-1}]=e_{n},\]
\[\mu_n^2:  \quad [e_i,e_1]=e_{i+1}, \ 1\leq i \leq {n-3},\  [e_1,e_{n-1}]=e_2+e_{n}, \
[e_i,e_{n-1}]=e_{i+1}, \  2\leq i \leq {n-3},\]
where all omitted products are equal to zero.
\end{theorem}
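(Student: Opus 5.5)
Theorem \ref{thm2.4} is quoted from \cite{Comm-2-fil}, so the proof I have in mind simply retraces the classification done there. It proceeds by analysing the graded structure that the characteristic sequence $(n-2,1,1)$ forces.

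\textbf{Step 1: adapted basis and grading.} Choose $x=e_1\in\mathcal{L}\setminus\mathcal{L}^2$ realizing $C(\mathcal{L})=(n-2,1,1)$. Then $R_{e_1}$ has one Jordan block of length $n-2$ and two blocks of length $1$, which gives a basis with $e_{i+1}=[e_i,e_1]$ for $1\le i\le n-3$, together with two further vectors $e_{n-1},e_n$. Since $\mathcal{L}$ is naturally graded, I will show that $\dim\mathcal{L}_1=2$ or $3$, and I would split into these two cases.
\begin{itemize}
\item If $\dim \mathcal{L}_1=3$, then $\mathcal{L}_1=\langle e_1,e_{n-1},e_n\rangle$ and $\mathcal{L}_i=\langle e_i\rangle$ for $i\ge2$.
\item If $\dim\mathcal{L}_1=2$, then $\mathcal{L}_1=\langle e_1,e_{n-1}\rangle$, and $e_n$ lies in some $\mathcal{L}_t$ with $t\ge2$.
\end{itemize}
In the second case I will use the gradation to restrict every bracket to lie in the correct graded component.

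\textbf{Step 2: Leibniz identity constraints.} Next I write general brackets such as $[e_1,e_{n-1}]$, $[e_{n-1},e_1]$, $[e_i,e_{n-1}]$, $[e_{n-1},e_{n-1}]$ and $[e_1,e_1]$, with unknown coefficients compatible with the grading. Applying the Leibniz identity to triples $(e_i,e_1,e_{n-1})$ and $(e_1,e_{n-1},e_1)$, and inducting on $i$, expresses all $[e_i,e_{n-1}]$ in terms of a few parameters. Maximality of the characteristic sequence then gives further conditions. For example, $R_{e_1+te_{n-1}}$ cannot have a Jordan block of length $n-1$, which forces certain brackets $[e_{n-3+?},\cdot]$ to vanish and bounds where $e_n$ can appear. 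Nonsplitness excludes $e_n$ lying in the annihilator as a direct summand, and non-Lie excludes the antisymmetric solutions.

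\textbf{Step 3: normalization by basis changes.} Finally I use graded basis changes $e_1'=Ae_1+Be_{n-1}$, $e_{n-1}'=Ce_1+De_{n-1}$ and rescalings of $e_n$ to reduce the remaining parameters to $0$ or $1$. I expect the surviving families to be exactly $\mu_n^1$, with $e_n=[e_1,e_{n-1}]$ in degree $2$, and $\mu_n^2$, with $[e_1,e_{n-1}]=e_2+e_n$ and $R_{e_{n-1}}$ acting like $R_{e_1}$ on $e_2,\dots,e_{n-3}$. For pairwise nonisomorphism I would compare invariants. One is $\dim$ of the right annihilator. Another is whether $R_{e_{n-1}}$ restricted to $\mathcal{L}^2$ is zero for every generator complement, which holds for $\mu_n^1$ and fails for $\mu_n^2$.

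\textbf{Main obstacle.} I expect the hardest part to be the case analysis in Step 2, namely placing $e_n$ in the right graded component and eliminating the extra parameters. It is also where the hypothesis $n\ge6$ has to enter, since in dimension $5$ additional algebras survive, namely $\mu_5^3$ and $\mu_5^4$. I would handle it by systematically solving the Leibniz identity recursively in $i$ and then checking the characteristic sequence for generic elements $e_1+te_{n-1}$.
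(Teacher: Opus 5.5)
The gap is in Step 1. The Jordan form of $R_{e_1}$ only gives \emph{some} vector $y$ such that $y,[y,e_1],\dots$ is a chain of length $n-2$. You may take $y=e_1$, i.e.\ $e_{i+1}=[e_i,e_1]$ for $1\le i\le n-3$, only if $R_{e_1}^{n-3}(e_1)\neq0$, and in particular only if $[e_1,e_1]\neq0$. Nothing in the hypotheses gives this. ``Non-Lie'' only says that some square $[x,x]$ is non-zero, and all squares may lie in $\mathcal{L}^3$, with the bracket $\mathcal{L}_1\times\mathcal{L}_1\to\mathcal{L}_2$ alternating.

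This case is not empty. On $\langle e_1,\dots,e_6\rangle$ put
\[
\begin{array}{l}
[e_i,e_1]=-[e_1,e_i]=e_{i+1}\ (2\le i\le 4),\quad [e_3,e_2]=-[e_2,e_3]=e_4+e_6,\\
{}[e_3,e_3]=e_5,\quad [e_2,e_4]=-2e_5,\quad [e_1,e_6]=[e_2,e_6]=2e_5,
\end{array}
\]
with all other products zero. This algebra has the following properties.
\begin{itemize}
\item It is graded by $\deg e_1=\deg e_2=1$, $\deg e_3=2$, $\deg e_4=\deg e_6=3$, $\deg e_5=4$, and it is generated in degree $1$.
\item The Leibniz identity holds: one checks the triples of total degree $\le4$ directly, and all other triples give zero on both sides.
\item For every $x$ one has $R_x^4=0$ and ${\rm rank}\,R_x\le 3$, while $R_{e_1}$ has Jordan type $(4,1,1)$. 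Hence $C(\mathcal{L})=(4,1,1)$.
\item It is non-Lie, since $[e_3,e_3]=e_5$.
\item It is nonsplit: as $\dim\mathcal{L}/\mathcal{L}^2=2$, a splitting would have two one-generated summands, and that gives a characteristic sequence with only two parts.
\end{itemize}
Here $[x,x]\in\langle e_5\rangle$ for every $x$, so no element heads the long chain. Moreover $\dim\mathcal{L}^2/\mathcal{L}^3=1$, whereas this dimension is $2$ for both $\mu_6^1$ and $\mu_6^2$. So the case you skip cannot be closed by any argument. As quoted, the list $\mu_n^1,\mu_n^2$ is complete only under the extra hypothesis that a characteristic vector heads the long chain (the ``first type'' setting). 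Your proof must state and use that hypothesis, rather than read it off the Jordan form.

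Within the first-type setting your plan is reasonable, but Steps 2 and 3 are only announced (``I expect'', ``$[e_{n-3+?},\cdot]$''). The actual content is not done: excluding nonsplit algebras with $\dim\mathcal{L}_1=3$, forcing $e_n\in\mathcal{L}_2$, and solving the Leibniz identities for $[e_i,e_{n-1}]$.

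Your non-isomorphism invariants also fail. Both one-sided annihilators have the same dimensions ($n-2$ and $3$) in $\mu_n^1$ and $\mu_n^2$. In $\mu_n^2$ the generator $e_{n-1}-e_1$ also acts trivially on $\mathcal{L}^2$ from the right, so your second test does not separate the two algebras. A working invariant is $S=\{y:[\mathcal{L}^2,y]=0\}$. It equals $\langle e_{n-1}\rangle\oplus\mathcal{L}^2$ in $\mu_n^1$, where every $y\in S$ has $[y,y]=0$. It equals $\langle e_{n-1}-e_1\rangle\oplus\mathcal{L}^2$ in $\mu_n^2$, where $[e_{n-1}-e_1,e_{n-1}-e_1]=-e_n$. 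Alternatively, compare $\dim{\rm Aut}$, which is $n+3$ for $\mu_n^1$ and $n+2$ for $\mu_n^2$ by Proposition \ref{prop-aut}.
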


\begin{theorem} \label{thm2.5} Let $\mathcal{L}$ be a 5-dimensional naturally graded 2-filiform nonsplit non-Lie Leibniz algebra. Then $\mathcal{L}$ is isomorphic to one of the following pairwise nonisomorphic
algebras:
\[\mu_5^1: \quad [e_1,e_1]=e_{2}, \ [e_2,e_1]=e_{3}, \ [e_1,e_{4}]=e_{5},\]
\[\mu_5^2: \quad [e_1,e_1]=e_{2}, \ [e_2,e_1]=e_{3}, \ [e_1,e_{4}]=e_2+e_{5},\ [e_2,e_{4}]=e_3,\]
\[\mu_5^3: \quad [e_1,e_1]=e_{2}, \ [e_2,e_1]=e_{3}, \ [e_1,e_{4}]=ie_2+e_{5},\ [e_2,e_{4}]=ie_3,\ [e_5,e_{4}]=e_3,\]
\[\mu_5^4: \quad [e_1,e_1]=e_{2}, \ [e_2,e_1]=e_{3}, \ [e_1,e_{4}]=e_{5},\ [e_5,e_{4}]=e_3,\]
where $i$ is the imaginary unit and  all omitted products are equal to zero.
\end{theorem}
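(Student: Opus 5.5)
Theorem~\ref{thm2.5} is quoted from \cite{Comm-2-fil}, but to prove it directly I would start from the natural gradation. For a naturally graded 2-filiform algebra with $C(\mathcal{L})=(3,1,1)$ in dimension five, I pick a characteristic element $e_1\in\mathcal{L}\setminus\mathcal{L}^2$ with $C(e_1)=(3,1,1)$. Then $R_{e_1}$ has one Jordan block of size $3$ and two of size $1$. In the non-Lie case one may take $e_1$ with $[e_1,e_1]\neq0$ (a standard argument: if every generic element had square zero, polarization would give antisymmetry of the bracket on generators and force a Lie algebra). This produces a basis $e_1$, $e_2=[e_1,e_1]$, $e_3=[e_2,e_1]$, with $[e_3,e_1]=0$, completed by $e_4,e_5$ spanning the kernel complement.

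Natural gradation splits into two possibilities: $\dim\mathcal{L}_1=2$ with $\mathcal{L}_1=\langle e_1,e_4\rangle$, or $\dim\mathcal{L}_1=3$. The second case, with gradation type $(3,1,1)$, is excluded because the nonsplit condition together with $\mathcal{L}_2=\langle e_2\rangle$ and $\mathcal{L}_3=\langle e_3\rangle$ forces a split summand. So the structure is $\mathcal{L}_1=\langle e_1,e_4\rangle$, $\mathcal{L}_2=\langle e_2,e_5\rangle$, $\mathcal{L}_3=\langle e_3\rangle$. The grading restricts the unknown products to
\[
[e_1,e_4]=\alpha_1e_2+\alpha_2e_5,\quad [e_4,e_1]=\beta_1e_2+\beta_2e_5,\quad [e_4,e_4]=\gamma_1e_2+\gamma_2e_5,
\]
\[
[e_i,e_j]\in\langle e_3\rangle \text{ for } i\in\{2,5\},\ j\in\{1,4\}.
\]
Since $e_4,e_5\in\ker R_{e_1}$, we get $[e_4,e_1]=0$ and $[e_5,e_1]=0$.

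Next I impose the Leibniz identity on generator triples, using the fact that right multiplications are derivations:
\begin{itemize}
\item $R_{e_4}$ acting on $[e_1,e_1]=e_2$ gives $[e_2,e_4]=[e_1,e_4]\,\text{-terms}$;
\item the identity on $(e_1,e_4,e_1)$ and on $(e_4,e_4,e_1)$ kills various $\gamma$ terms.
\end{itemize}
The characteristic-sequence condition then forces $[e_4,e_4]=0$: for a generic $x=e_1+te_4$, $R_x$ must not have a larger block than $3$. Nonsplitness forces $e_5$ to be genuinely produced, so after rescaling $e_5$ we may take $\alpha_2=1$. This leaves a small family of parameters: $[e_1,e_4]=\alpha e_2+e_5$, $[e_2,e_4]=\alpha e_3$, $[e_5,e_4]=\delta e_3$. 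The remaining Leibniz constraints, such as $[e_1,[e_4,e_4]]$ and $[e_4,[e_1,e_4]]$, should produce one quadratic relation linking $\alpha$ and $\delta$, presumably of the form $\delta(\alpha^2+\delta)\cdot(\ldots)=0$ or similar.

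The main obstacle is the normalization under graded base changes $e_1'=Ae_1+Be_4$, $e_4'=Ce_1+De_4$, and I would handle it as follows:
\begin{itemize}
\item Compute the transformed parameters, expected to be a projective action on $(\alpha,\delta)$.
\item Show that if $\delta=0$ then $\alpha=0$ or $\alpha\neq0$ gives $\mu_5^1$ and $\mu_5^2$ respectively, by scaling $\alpha$ to $1$.
\item Show that if $\delta\ne0$ then scaling makes $\delta=1$, and the Leibniz or base-change constraint forces $\alpha^2+1=0$ or $\alpha=0$. Since $\alpha\to-\alpha$ is realized, $\alpha=\pm i$ reduces to $\alpha=i$, giving $\mu_5^3$ and $\mu_5^4$.
\end{itemize}

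Pairwise non-isomorphism I would establish by invariants:
\begin{itemize}
\item the dimension of the annihilator or center;
\item whether $e_3\in[\mathcal{L}^2,\mathcal{L}]$ arises from $e_5$;
\item the rank of the symmetric part of the bracket on $\mathcal{L}_1$, or more specifically whether the quadratic form $x\mapsto$ the $e_2$-component of $[x,x]$ degenerates jointly with the $e_3$-map.
\end{itemize}
For the last point I would compare the $e_3$-valued bilinear form $(x,y)\mapsto[[x,x],y]$ on $\mathcal{L}_1$ across the four algebras. Its discriminant distinguishes the cases $\alpha=i,\ \delta=1$ from $\alpha=0,\ \delta=1$.
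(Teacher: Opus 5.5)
The paper does not prove this theorem; it quotes it from \cite{Comm-2-fil}. Your outline therefore has to stand on its own, and as written it has two genuine gaps.

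\textbf{First gap: the opening reduction.} Choosing a characteristic $e_1$ with $[e_1,e_1]\neq0$ does not give $[[e_1,e_1],e_1]\neq0$. That is, it does not place $e_1$ at the top of the length-$3$ Jordan block of $R_{e_1}$. A priori $e_1$ could lie in $\ker R_{e_1}^2$, with $[e_1,e_1]$ at the bottom of the long chain. This is exactly the first-type/second-type dichotomy that the literature treats separately.

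What you need is that $x\mapsto[[x,x],x]$ is not identically zero. Since this and $R_x^2\neq0$ are both Zariski-open conditions, a common $e_1$ then exists. In dimension five this follows from the grading, which you should analyse \emph{before} choosing $e_1$; you silently assume type $(2,2,1)$.
\begin{itemize}
\item Types $(2,1,2)$ and $(2,1,1,1)$ are excluded directly.
\item Type $(3,1,1)$ yields a Lie or a split algebra.
\item In type $(2,2,1)$, the absence of $x$ with $C(x)=(3,2)$ forces ${\rm rank}\,R_x|_{\mathcal{L}_1}\le1$ for all $x\in\mathcal{L}_1$. Since $[\mathcal{L}_1,\mathcal{L}_1]$ is two-dimensional, these maps share a common kernel, so $[y,x]=\lambda(y)\Phi(x)$ with $\Phi\colon\mathcal{L}_1\to\mathcal{L}_2$ bijective. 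Leibniz then gives $[\mathcal{L},\mathcal{L}^2]=0$ and $[[y,x],z]=\lambda(y)Q(x,z)$ with $Q$ symmetric and non-zero. Hence $[[x,x],x]=\lambda(x)Q(x,x)\not\equiv0$.
\end{itemize}

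\textbf{Second gap: the final normalization.} Once you reach $[e_1,e_1]=e_2$, $[e_2,e_1]=e_3$, $[e_1,e_4]=\alpha e_2+e_5$, $[e_2,e_4]=\alpha e_3$, $[e_5,e_4]=\delta e_3$ (all other products zero), the Leibniz identity holds for \emph{every} $(\alpha,\delta)\in\mathbb{C}^2$. The eight triples from $\{e_1,e_4\}$ check directly, and all others land in $\mathcal{L}^4=0$. Every member also has $C=(3,1,1)$. So there is no relation $\delta(\alpha^2+\delta)(\dots)=0$, and nothing forces $\alpha\in\{0,\pm i\}$ when $\delta=1$.

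Scaling does not rescue this. The change $e_1\mapsto Ae_1$, $e_4\mapsto De_4$ acts by $(\alpha,\delta)\mapsto(t\alpha,t^2\delta)$ with $t=D/A$, which preserves $\alpha^2/\delta$. A scaling argument would therefore leave a spurious one-parameter family.

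What is missing is the shear $e_1\mapsto e_1+Be_4$, together with the two invariants that actually separate the classes:
\begin{itemize}
\item $\delta\neq0$ if and only if $\dim{\rm Ann}(\mathcal{L})=1$;
\item $\alpha^2+\delta=0$ if and only if $R_{e_4}^2=0$. Here $[[e_1,e_4],e_4]=(\alpha^2+\delta)e_3$, and $e_4$ spans $\{x:[x,\mathcal{L}]\subseteq\mathcal{L}^3\}$ modulo $\mathcal{L}^2$, so this condition is intrinsic.
\end{itemize}
With these the classification goes through:
\begin{itemize}
\item If $\delta(\alpha^2+\delta)\neq0$, the shear with $B=-\alpha/(\alpha^2+\delta)$ kills $\alpha$, and rescaling gives $\mu_5^4$.
\item If $\delta\neq0=\alpha^2+\delta$, rescaling gives $\alpha=\pm i$, $\delta=1$, i.e.\ $\mu_5^3$.
\item If $\delta=0$, one gets $\mu_5^1$ or $\mu_5^2$ according as $\alpha=0$ or not.
\end{itemize}

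The same two invariants prove pairwise non-isomorphism. Yours do not suffice. ``Whether $e_3$ arises from $e_5$'' is basis-dependent and fails to separate $\mu_5^1$ from $\mu_5^2$. The map $(x,y)\mapsto[[x,x],y]$ is not bilinear. The cubic $x\mapsto[[x,x],x]$ on $\mathcal{L}_1$ would work when combined with $\dim{\rm Ann}$: its root multiplicities are $(3)$, $(2,1)$, $(2,1)$, $(1,1,1)$ for $\mu_5^1,\dots,\mu_5^4$.

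\textbf{Smaller slips.}
\begin{itemize}
\item A non-zero $e_5$-component of $[e_4,e_4]$ is excluded because it makes ${\rm rank}\,R_x=3$, i.e.\ $C(x)=(3,2)$. It is not excluded because of a block longer than $3$.
\item $\alpha_2\neq0$ follows from $\dim\mathcal{L}_2=2$, not from nonsplitness.
\end{itemize}
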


Note that $\mu_5^1$ and $\mu_5^2$ are the algebras $\mu_n^1$ and $\mu_n^2$ of Theorem \ref{thm2.4} for $n=5$; therefore below the algebras $\mu_n^1$ and $\mu_n^2$ are considered for all $n\geq 5$.

\subsection{Basic definitions and methods.} We use in this paper the notation and methods described in \cite{hac16,ha17,is11}. Further, in this section we give some important definitions.

Let $(\mathcal{L}, [-,-])$ be a Leibniz algebra and $\mathbb V$ be a vector space. Then the $\mathbb C$-linear space ${\rm Z}^{2}(\mathcal{L},\mathbb{V})$ is defined as the set of all  bilinear maps $\theta\colon\mathcal{L}\times\mathcal{L}\longrightarrow{\mathbb V}$
such that
\[\theta(x, [y,z])=\theta([x,y],z)-\theta([x,z], y).\]
Such maps are said to be {\it cocycles}. Let $f\colon \mathcal{L}\longrightarrow {\mathbb V}$ be a linear map, and define
$\delta f(x, y)=f([x,y])$. Then $\delta f\in {\rm Z}^{2}(\mathcal{L},{\mathbb V})$, i.e. $\delta f$ is a cocycle, called a {\it coboundary}. The set of all
coboundaries is denoted by ${\rm B}^{2}(\mathcal{L},{\mathbb V})=\{\theta =\delta f\ : f\in{\rm Hom}(\mathcal{L},{\mathbb V})\} $. Clearly, ${\rm B}^{2}(\mathcal{L},\mathbb V)$ is a linear subspace of ${\rm Z}^{2}(\mathcal{L},{\mathbb V})$. We call the quotient space, denoted by ${\rm H}^{2}(\mathcal{L},{\mathbb V})={\rm Z}^{2}(\mathcal{L},{\mathbb V})\big/{\rm B}^{2}(\mathcal{L},{\mathbb V}),$ the {\it second cohomology} space of $\mathcal{L}$ by ${\mathbb V}.$

The automorphism group $Aut(\mathcal{L})$ acts on ${\rm Z}^{2}(\mathcal{L},{\mathbb V})$ by $\phi\theta (x,y)
=\theta(\phi(x),\phi(y))$. Also, $\theta\in{\rm B}^{2}(\mathcal{L},{\mathbb V})$ if and only if $\phi\theta\in{\rm B}^{2}(\mathcal{L},{\mathbb V})$, so $Aut(\mathcal{L})$ acts on ${\rm H}^{2}(\mathcal{L},{\mathbb V})$.

Let $\mathcal{L}$ be a Leibniz  algebra of dimension $m<n,$ and ${\mathbb V}$ be a $\mathbb C$-vector
space of dimension $n-m$. For any $\theta \in {\rm Z}^{2}(\mathcal{L},{\mathbb V})$, on the linear space $\mathcal{L}_{\theta } :=\mathcal{L}\oplus{\mathbb V}$ define the bilinear product by $[x+x^{\prime},y+y^{\prime}]_{\mathcal{L}_{\theta }}=
[x,y] +\theta(x,y) $ for all $x,y\in \mathcal{L},x^{\prime },y^{\prime }\in {\mathbb V}$.
The algebra $\mathcal{L}_{\theta }$ is a Leibniz  algebra which is called an $(n-m)$-{\it dimensional central extension} of $\mathcal{L}$ by ${\mathbb V}$. Indeed, $\mathcal{L}_{\theta}$ is a Leibniz algebra if and only if $\theta \in {\rm Z}^{2}(\mathcal{L}, {\mathbb V})$.

Let $\theta$ be a cocycle. Then the annihilator of
$\mathcal{L}_{\theta}$ is equal to $Ann(\mathcal{L}_{\theta}) = (Ann(\theta) \cap Ann(\mathcal{L}) )\oplus \mathbb{V},$ where  $Ann(\theta)=\{ x\in \mathcal{L}:\theta (x,\mathcal{L})+\theta(\mathcal{L},x)=0\}$ is the {\it annihilator} of $\theta $ and $Ann(\mathcal{L}) =\{x\in\mathcal{L}:  [x,\mathcal{L}]+[\mathcal{L}, x] =0\}$ is the {\it annihilator} of the algebra $\mathcal{L}$. Note that if $x\in Ann(\mathcal{L})$, then $\delta f(x,\mathcal{L})=\delta f(\mathcal{L},x)=0$; hence the space $Ann(\theta)\cap Ann(\mathcal{L})$ does not depend on the choice of the representative of the cohomology class of $\theta$.

The result of the following lemma shows that every nilpotent algebra with a non-zero annihilator is a central extension of a smaller-dimensional nilpotent algebra.

\begin{lemma} Let $\mathcal{L}$ be an $n$-dimensional Leibniz algebra such that $\dim(Ann(\mathcal{L}))=m\neq0$. Then there exists, up to isomorphism, a unique $(n-m)$-dimensional Leibniz  algebra $\mathcal{L}'$ and a bilinear map $\theta \in {\rm Z}^{2}(\mathcal{L}', {\mathbb V})$ with $Ann(\mathcal{L}')\cap Ann(\theta)=0$, where $\mathbb{V}$ is a vector space of dimension $m,$ such that $\mathcal{L}\cong {\mathcal{L}'}_{\theta}$ and $\mathcal{L}/Ann(\mathcal{L})\cong \mathcal{L}'$.
\end{lemma}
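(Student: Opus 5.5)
We prove existence directly and then uniqueness of $\mathcal{L}'$. Put $\mathbb{V}=Ann(\mathcal{L})$, which is an ideal of $\mathcal{L}$ of dimension $m$, and set $\mathcal{L}'=\mathcal{L}/Ann(\mathcal{L})$, a Leibniz algebra of dimension $n-m$ with the induced bracket $[\cdot,\cdot]'$. This already gives the last claim, $\mathcal{L}/Ann(\mathcal{L})\cong\mathcal{L}'$. Choose a complementary subspace $W$ with $\mathcal{L}=W\oplus \mathbb{V}$. Let $\pi\colon\mathcal{L}\to\mathcal{L}'$ be the projection and $p\colon\mathcal{L}\to\mathbb{V}$ the projection along $W$. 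Then $\pi|_W\colon W\to\mathcal{L}'$ is a linear isomorphism; let $s$ denote its inverse. Define $\theta\colon\mathcal{L}'\times\mathcal{L}'\to\mathbb{V}$ by
\[\theta(x,y)=p\big([s(x),s(y)]\big).\]

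First we check that $\theta\in{\rm Z}^2(\mathcal{L}',\mathbb{V})$ and that $\mathcal{L}\cong\mathcal{L}'_\theta$. For $x,y\in\mathcal{L}'$ we have
\[[s(x),s(y)]=s([x,y]')+\theta(x,y),\]
because the $W$-component of the left-hand side is sent by $\pi$ to $[x,y]'$. Consider the linear isomorphism $\Phi\colon\mathcal{L}'\oplus\mathbb{V}\to\mathcal{L}$, $\Phi(x+v)=s(x)+v$. Since $\mathbb{V}$ is the annihilator, the $\mathbb{V}$-parts contribute nothing to brackets, so
\[[\Phi(x+v),\Phi(y+w)]=[s(x),s(y)]=\Phi\big([x,y]'+\theta(x,y)\big).\]
Thus $\Phi$ is an isomorphism from $\mathcal{L}'_\theta$, with the product defined in the paper, onto $\mathcal{L}$. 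In particular $\mathcal{L}'_\theta$ is a Leibniz algebra, and by the remark in the paper this forces $\theta$ to be a cocycle. One can also verify the cocycle identity directly: apply $p$ to the Leibniz identity for $s(x),s(y),s(z)$ and use that brackets with elements of $\mathbb{V}$ vanish.

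Next we show $Ann(\mathcal{L}')\cap Ann(\theta)=0$. Suppose $x$ lies in this intersection. Then for every $y\in\mathcal{L}'$,
\[[s(x),s(y)]=s([x,y]')+\theta(x,y)=0,\]
and similarly $[s(y),s(x)]=0$. Brackets with $\mathbb{V}$ vanish as well, so $s(x)\in Ann(\mathcal{L})=\mathbb{V}$. But $s(x)\in W$ and $W\cap\mathbb{V}=0$, hence $s(x)=0$ and $x=0$. Consistently, the formula $Ann(\mathcal{L}'_\theta)=(Ann(\theta)\cap Ann(\mathcal{L}'))\oplus\mathbb{V}$ then returns exactly $\mathbb{V}$.

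For uniqueness, suppose $\mathcal{L}\cong\mathcal{L}''_{\theta'}$ for some Leibniz algebra $\mathcal{L}''$ and cocycle $\theta'\in{\rm Z}^2(\mathcal{L}'',\mathbb{V}')$ with $Ann(\mathcal{L}'')\cap Ann(\theta')=0$ and $\dim\mathbb{V}'=m$. By the annihilator formula, $Ann(\mathcal{L}''_{\theta'})=\mathbb{V}'$. Quotienting by this ideal gives $\mathcal{L}''_{\theta'}/\mathbb{V}'\cong\mathcal{L}''$. An isomorphism $\mathcal{L}\to\mathcal{L}''_{\theta'}$ carries $Ann(\mathcal{L})$ onto $Ann(\mathcal{L}''_{\theta'})$, so
\[\mathcal{L}''\cong\mathcal{L}/Ann(\mathcal{L})=\mathcal{L}'.\]

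There is no real obstacle. The only point requiring care is that uniqueness is claimed for $\mathcal{L}'$ only, not for $\theta$, which is determined just up to the choice of complement $W$ and the action of automorphisms. We also rely on the annihilator formula stated before the lemma, which follows immediately from $[x+x',y+y']=[x,y]+\theta(x,y)$.
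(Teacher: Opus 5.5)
Your proof is correct. The paper states this lemma without proof, relying on the central-extension literature it cites, and your argument matches the standard one from that literature: $\mathcal{L}'=\mathcal{L}/Ann(\mathcal{L})$, $\theta$ is the $Ann(\mathcal{L})$-component of $[s(x),s(y)]$ for a linear section $s$, and uniqueness of $\mathcal{L}'$ follows from $Ann(\mathcal{L}'_\theta)=\mathbb V$ (you are also right that only $\mathcal{L}'$, not $\theta$, is unique).
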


However, in order to solve the isomorphism problem we need to study the action of $Aut(\mathcal{L})$ on ${\rm H}^{2}(\mathcal{L},{\mathbb V})$. To do this, let us fix $e_{1},\ldots ,e_{s}$ a basis of ${\mathbb V}$, and $\theta \in {\rm Z}^{2}(\mathcal{L},{\mathbb V})$. Then $\theta $ can be uniquely written as $\theta(x,y)=\sum\limits_{i=1}^{s}\theta_{i}(x,y)e_{i}$, where $\theta _{i}\in{\rm Z}^{2}(\mathcal{L},\mathbb{C})$. Moreover, $Ann(\theta)=Ann(\theta_{1})\cap Ann(\theta_{2})\cap\ldots \cap Ann(\theta _{s})$. Further, $\theta \in {\rm B}^{2}(\mathcal{L},{\mathbb V})$ if and only if all $\theta _{i}\in {\rm B}^{2}(\mathcal{L}, \mathbb C)$.

Let $\mathcal{L}=I_1\oplus I_2$ be the direct sum of two ideals. Suppose that $I_2$ is contained in the centre of $\mathcal{L}$. Then $I_2$ is called a central component of $\mathcal{L}$.

A Leibniz algebra $\mathcal{L}=I_1\oplus I_2$ with central component $I_2$ is a trivial central extension of $I_1$ by $I_2,$ i.e. Leibniz algebras with central components are simply obtained by taking direct sums of Leibniz algebras of smaller dimension with trivial Leibniz algebras. Therefore, when constructing Leibniz algebras with a non trivial center as central extensions we want to avoid constructing those with central components. The following lemma will help us to exclude Leibniz algebras with central components.

\begin{lemma}\label{lem-comp} Let $\theta (x,y)=\sum\limits_{i=1}^{s}\theta_{i}(x,y)e_{i}\in {\rm Z}^{2}(\mathcal{L},\mathbb{V})$ and $Ann(\theta)\cap Ann(\mathcal{L}) =0$. Then $\mathcal{L}_{\theta}$ has an annihilator component if and only if $[\theta_{1}],[\theta_{2}],\ldots,[\theta_{s}]$ are linearly dependent in ${\rm H}^{2}(\mathcal{L},\mathbb{C})$.
\end{lemma}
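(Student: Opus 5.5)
We prove both implications directly, working in $\mathcal{L}_\theta=\mathcal{L}\oplus\mathbb{V}$ with $\mathbb{V}=\langle e_1,\dots,e_s\rangle$.

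\emph{Dependence implies a central component.} Suppose $\sum_i c_i[\theta_i]=0$ with not all $c_i$ zero. Apply a linear change of basis of $\mathbb{V}$: this replaces the $\theta_i$ by linear combinations, does not change the isomorphism class of $\mathcal{L}_\theta$, and does not change $Ann(\theta)$. We may therefore assume $[\theta_s]=0$, that is, $\theta_s=\delta f$ for some $f\in{\rm Hom}(\mathcal{L},\mathbb{C})$. Now use the linear isomorphism $\phi\colon\mathcal{L}_\theta\to\mathcal{L}_{\theta'}$ given by $x\mapsto x-f(x)e_s$ on $\mathcal{L}$ and the identity on $\mathbb{V}$. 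Here $\theta'$ has components $\theta_1,\dots,\theta_{s-1}$ and $\theta_s-\delta f=0$. It is a homomorphism because
\[
[\phi x,\phi y]=[x,y]+\theta'(x,y)=\phi([x,y])+\theta(x,y)-\delta f(x,y)e_s+f([x,y])e_s,
\]
and the last two terms cancel. In $\mathcal{L}_{\theta'}$ no product has an $e_s$-component, and $e_s$ is central. Hence $\langle e_s\rangle$ is an ideal in the centre, $\mathcal{L}\oplus\langle e_1,\dots,e_{s-1}\rangle$ is an ideal complementing it, and $\langle e_s\rangle$ is a central component.

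\emph{A central component implies dependence.} Suppose $\mathcal{L}_\theta=I_1\oplus I_2$ with $I_2\neq0$ central. Both ideals are ideals of $\mathcal{L}_\theta$, so
\[
\mathcal{L}_\theta^2=[I_1,I_1]\subseteq I_1,
\]
and therefore $I_2\cap\mathcal{L}_\theta^2=0$. Also $I_2\subseteq Ann(\mathcal{L}_\theta)=(Ann(\theta)\cap Ann(\mathcal{L}))\oplus\mathbb{V}=\mathbb{V}$, using the hypothesis $Ann(\theta)\cap Ann(\mathcal{L})=0$. Thus $\mathbb{V}\cap\mathcal{L}_\theta^2\neq\mathbb{V}$, and we can pick a nonzero functional $\lambda$ on $\mathbb{V}$ vanishing on $\mathbb{V}\cap\mathcal{L}_\theta^2$.

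We next extend $\lambda$ to a functional $\Lambda$ on $\mathcal{L}_\theta$ that vanishes on $\mathcal{L}_\theta^2$. This is possible because $\lambda$ is zero on $\mathbb{V}\cap\mathcal{L}_\theta^2$: first define $\Lambda$ on $\mathbb{V}+\mathcal{L}_\theta^2$ as $\lambda$ on $\mathbb{V}$ and $0$ on $\mathcal{L}_\theta^2$, which is consistent, then extend arbitrarily. Set $f(x)=-\Lambda(x)$ for $x\in\mathcal{L}$ and write $c_i=\lambda(e_i)$, not all zero.

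For $x,y\in\mathcal{L}$ we have $[x,y]_{\mathcal{L}_\theta}\in\mathcal{L}_\theta^2$, so
\[
0=\Lambda([x,y]+\theta(x,y))=-f([x,y])+\sum_i c_i\theta_i(x,y).
\]
Hence $\sum_i c_i\theta_i=\delta f$, and the classes $[\theta_i]$ are linearly dependent.

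The delicate step is the converse direction. One must use the hypothesis $Ann(\theta)\cap Ann(\mathcal{L})=0$ to force $I_2\subseteq\mathbb{V}$, and then extend $\lambda$ consistently across $\mathcal{L}_\theta^2$.
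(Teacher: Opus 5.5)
The paper gives no proof of this lemma. It is stated as a standard tool of the Skjelbred--Sund method, taken from the literature the paper cites for its methods, so there is no in-paper argument to compare against.

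Your proof is correct and complete, and it is the usual argument. In one direction, after a change of basis of $\mathbb V$, you absorb a coboundary component by the shift $x\mapsto x-f(x)e_s$, which splits off $\langle e_s\rangle$. In the other direction, you use $Ann(\mathcal{L}_\theta)=\mathbb V$ to place the central component inside $\mathbb V$. You then build a functional on $\mathcal{L}_\theta$ that kills $\mathcal{L}_\theta^2$ but not $\mathbb V$, and this is exactly a nontrivial relation $\sum_i c_i\theta_i=\delta f$.

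Two small remarks:
\begin{itemize}
\item You can get $\Lambda$ slightly more directly. Compose the projection $\mathcal{L}_\theta\to I_2$ along $I_1$ with any nonzero functional on $I_2$. It vanishes on $\mathcal{L}_\theta^2\subseteq I_1$ automatically and is nonzero on $I_2\subseteq\mathbb V$, so no extension step is needed.
\item You are right that the hypothesis $Ann(\theta)\cap Ann(\mathcal{L})=0$ is used only in the converse, and it is genuinely needed there. A central direct summand $\mathbb{C}z$ of $\mathcal{L}$ with $z\in Ann(\theta)$ remains a central component of $\mathcal{L}_\theta$, whatever the classes $[\theta_i]$ are.
\end{itemize}
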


Let $G_{s}({\rm H}^{2}(\mathcal{L},{\mathbb C})) $ be the {\it Grassmannian} of subspaces of dimension $s$ in ${\rm H}^{2}(\mathcal{L},{\mathbb C})$. There is a natural action of $Aut(\mathcal{L})$ on $G_{s}({\rm H}^{2}(\mathcal{L},{\mathbb C})) $ defined by
\[\phi\langle[\theta_{1}],[\theta_{2}],\dots, [\theta_{s}]\rangle=\langle[\phi\theta_{1}], [\phi\theta_{2}], \dots, [\phi\theta_{s}] \rangle\]
for $W=\langle
[\theta_{1}],[\theta_{2}],\dots,[\theta_{s}]\rangle\in G_{s}({\rm H}^{2}(\mathcal{L},{\mathbb C}))$ and $\phi\in Aut(\mathcal{L})$.

Note that if $\{[\theta_{1}],[\theta_{2}],\dots, [\theta_{s}]\}$ is linearly independent, then $\{[\phi\theta_{1}], [\phi\theta_{2}], \dots, [\phi\theta_{s}]\}$ is linearly independent too. So we can introduce the set
\[
T_{s}(\mathcal{L}) =\{W=\langle[\theta_{1}], [\theta_{2}],\dots, [\theta_{s}]\rangle\in
G_{s}({\rm H}^{2}(\mathcal{L},{\mathbb C})) : \bigcap\limits_{i=1}^{s}Ann(\theta _{i})\cap Ann(\mathcal{L})=0\},\]
which is stable under the action of $Aut(\mathcal{L})$.

Now, let ${\mathbb V}$ be an $s$-dimensional linear space and let us denote by $E(\mathcal{L},{\mathbb V})$ the set of all {\it non-split $s$-dimensional central extensions} of $\mathcal{L}$ by ${\mathbb V}$. We can write
\[
E(\mathcal{L},{\mathbb V})=\{\mathcal{L}_{\theta}:\theta(x,y)=\sum_{i=1}^{s}\theta_{i}(x,y)e_{i} \ \ \text{and} \ \ \langle [\theta_{1}], [\theta_{2}],\dots, [\theta_{s}]\rangle\in T_{s}(\mathcal{L})\}. \]

We also have the next result, which is proved in \cite{is11}.

\begin{lemma}\label{lem-iso} Let $\mathcal{L}_{\theta },\mathcal{L}_{\vartheta }\in E(\mathcal{L},{\mathbb V})$. Suppose that $\theta(x,y)=\sum\limits_{i=1}^{s}\theta_{i}(x,y)e_{i}$ and $\vartheta(x,y)=\sum\limits_{i=1}^{s}\vartheta_{i}(x,y)e_{i}$. Then the Leibniz algebras $\mathcal{L}_{\theta}$ and $\mathcal{L}_{\vartheta } $ are isomorphic if and only if \[\operatorname{Orb}\langle[\theta_{1}], [\theta_{2}],\dots,[\theta_{s}]\rangle=\operatorname{Orb}\langle [\vartheta_{1}], [\vartheta_{2}], \dots, [\vartheta_{s}]\rangle .\]
\end{lemma}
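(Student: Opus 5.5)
To prove Lemma \ref{lem-iso} I will check the two implications directly. The tools are the explicit form of an isomorphism between central extensions and the condition $Ann(\theta)\cap Ann(\mathcal{L})=0$.

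\emph{Sufficiency.} Suppose the orbits coincide. Then there is $\phi\in Aut(\mathcal{L})$ with $\langle[\phi\vartheta_1],\dots,[\phi\vartheta_s]\rangle=\langle[\theta_1],\dots,[\theta_s]\rangle$. Hence there is an invertible matrix $(a_{ij})$ and linear maps $f_i\colon\mathcal{L}\to\mathbb{C}$ such that
\[
\phi\vartheta_i=\sum_j a_{ij}\theta_j+\delta f_i .
\]
I define $\Phi\colon\mathcal{L}\oplus\mathbb{V}\to\mathcal{L}\oplus\mathbb{V}$ by
\[
\Phi(x)=\phi(x)+\sum_i f_i(x)e_i,\qquad \Phi(e_j)=\sum_i a_{ij}e_i
\]
(indices arranged so that the identity below holds). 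A direct computation then gives $[\Phi x,\Phi y]_{\vartheta}=\Phi([x,y]_\theta)$. The point is that the $\mathcal{L}$-parts agree because $\phi$ is an automorphism. The $\mathbb{V}$-parts differ exactly by $\sum_i\bigl(\phi\vartheta_i-\delta f_i\bigr)(x,y)\,e_i$, and this matches $\sum_j\theta_j(x,y)\Phi(e_j)$. So $\Phi$ is an isomorphism $\mathcal{L}_\theta\cong\mathcal{L}_\vartheta$.

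\emph{Necessity.} This is the main step. Let $\Psi\colon\mathcal{L}_\theta\to\mathcal{L}_\vartheta$ be an isomorphism. By the annihilator formula, $Ann(\mathcal{L}_\theta)=(Ann(\theta)\cap Ann(\mathcal{L}))\oplus\mathbb{V}=\mathbb{V}$, and likewise $Ann(\mathcal{L}_\vartheta)=\mathbb{V}$. This is precisely where the hypothesis $\langle[\theta_i]\rangle\in T_s(\mathcal{L})$ is used. Since isomorphisms preserve annihilators, $\Psi(\mathbb{V})=\mathbb{V}$.

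Consequently $\Psi$ induces an automorphism $\phi$ of $\mathcal{L}_\theta/\mathbb{V}\cong\mathcal{L}\cong\mathcal{L}_\vartheta/\mathbb{V}$. Concretely, I write
\[
\Psi(x)=\phi(x)+\sum_i f_i(x)e_i,\qquad \Psi(e_j)=\sum_i a_{ij}e_i ,
\]
with $(a_{ij})$ invertible. Comparing the $\mathbb{V}$-components of $\Psi([x,y]_\theta)=[\Psi x,\Psi y]_\vartheta$ reverses the computation of the sufficiency part. It yields $\phi\vartheta_i=\sum_j a_{ij}\theta_j+\delta f_i$, where $\delta f_i(x,y)=f_i([x,y])$ is a coboundary. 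Passing to cohomology, $\phi\langle[\vartheta_i]\rangle=\langle[\theta_j]\rangle$, so the two orbits coincide.

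The only delicate point is the identification $Ann(\mathcal{L}_\theta)=\mathbb{V}$, which forces $\Psi$ to preserve $\mathbb{V}$. Once that is established, the rest is bookkeeping of indices and careful tracking of the coboundary terms.
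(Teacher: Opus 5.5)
Your proof is correct and follows essentially the same route as the paper. The paper cites this lemma from \cite{is11}, but its proof of the refinement, Lemma \ref{lem-orb}, is the same argument: an isomorphism preserving $\mathbb V$ decomposes as $\Phi(x)=\phi(x)+f(x)$, $\Phi(v)=Av$, giving $A\theta=\phi\vartheta-\delta f$, and conversely; and $Ann(\mathcal{L}_\theta)=\mathbb V$ for $W_\theta\in T_s(\mathcal{L})$ forces every isomorphism to map $\mathbb V$ onto $\mathbb V$. The invertibility of $(a_{ij})$ in your sufficiency part relies on the classes $[\theta_j]$ and $[\phi\vartheta_i]$ being linearly independent. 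This holds automatically because elements of $T_s(\mathcal{L})$ are $s$-dimensional subspaces.
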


In the present paper we determine the $Aut(\mathcal{L})$-orbits on the whole Grassmannian $G_s({\rm H}^{2}(\mathcal{L},{\mathbb C}))$, and we indicate which orbits lie in $T_s(\mathcal{L})$. The meaning of the orbits which do not lie in $T_s(\mathcal{L})$ is explained by the following lemma, which is a slight refinement of Lemma \ref{lem-iso}.

\begin{lemma}\label{lem-orb} Let $\theta,\vartheta\in {\rm Z}^{2}(\mathcal{L},{\mathbb V})$ be such that the classes $[\theta_1],\dots,[\theta_s]$, as well as $[\vartheta_1],\dots,[\vartheta_s]$, are linearly independent, and let $W_\theta=\langle[\theta_1],\dots,[\theta_s]\rangle$, $W_\vartheta=\langle[\vartheta_1],\dots,[\vartheta_s]\rangle$. Then $W_\theta$ and $W_\vartheta$ lie in the same $Aut(\mathcal{L})$-orbit if and only if there exists an isomorphism $\Phi\colon \mathcal{L}_{\theta}\to\mathcal{L}_{\vartheta}$ with $\Phi(\mathbb V)=\mathbb V$. If moreover $W_\theta\in T_s(\mathcal{L})$, then $Ann(\mathcal{L}_\theta)=\mathbb V$, so every isomorphism $\mathcal{L}_{\theta}\to\mathcal{L}_{\vartheta}$ maps $\mathbb V$ onto $\mathbb V$.
\end{lemma}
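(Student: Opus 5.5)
The plan is to make the correspondence between isomorphisms of extensions preserving $\mathbb V$ and pairs (automorphism of $\mathcal{L}$, change of basis of $\mathbb V$) completely explicit, and then read off both directions.

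\emph{From orbits to isomorphisms.} Suppose $\phi\in Aut(\mathcal{L})$ satisfies $\phi W_\theta=W_\vartheta$. Since both spanning sets are linearly independent, there is an invertible matrix $(a_{ij})$ with
\[ [\phi\vartheta_i]=\sum_j a_{ij}[\theta_j], \qquad i=1,\dots,s. \]
(Apply the hypothesis to $\phi^{-1}$, or equivalently rewrite the $\phi\vartheta_i$ in terms of the $\theta_j$.) Lifting to cocycles, we get
\[ \vartheta_i(\phi x,\phi y)=\sum_j a_{ij}\theta_j(x,y)+f_i([x,y]) \]
for some linear maps $f_i\colon\mathcal{L}\to\mathbb C$. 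Let $A\colon\mathbb V\to\mathbb V$ be the linear map determined by the matrix $(a_{ij})$. Define
\[ \Phi(x+v)=\phi(x)-\sum_i f_i(x)e_i+A'(v), \]
where $A'$ is chosen so that $\Phi(\theta(x,y))$ equals the $\mathbb V$-part of $\vartheta(\phi x,\phi y)$ after removing the coboundary term. A direct check on products gives
\[ \Phi([x,y]+\theta(x,y))=\phi[x,y]+\vartheta(\phi x,\phi y)=[\Phi x,\Phi y]_{\vartheta}, \]
using that $\mathbb V$ is central in both algebras. Since $\phi$ and $A'$ are invertible, $\Phi$ is bijective, and clearly $\Phi(\mathbb V)=\mathbb V$. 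The only real work in this step is fixing the index conventions so that the transpose or inverse of the matrix lands in the right place.

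\emph{From isomorphisms to orbits.} Conversely, let $\Phi\colon\mathcal{L}_\theta\to\mathcal{L}_\vartheta$ be an isomorphism with $\Phi(\mathbb V)=\mathbb V$. Then $\Phi$ induces an isomorphism
\[ \phi\colon \mathcal{L}=\mathcal{L}_\theta/\mathbb V\to\mathcal{L}_\vartheta/\mathbb V=\mathcal{L}, \]
so $\phi\in Aut(\mathcal{L})$. Writing $\Phi(x)=\phi(x)+g(x)$ with $g\colon\mathcal{L}\to\mathbb V$, and $\Phi|_{\mathbb V}=A$, we compare the $\mathbb V$-components of $\Phi[x,y]_\theta=[\Phi x,\Phi y]_\vartheta$. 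This yields
\[ A\theta(x,y)+g([x,y])=\vartheta(\phi x,\phi y). \]
Expanding in the basis $e_i$ shows that each $[\phi\vartheta_i]$ is a linear combination of the $[\theta_j]$. Since $A$ is invertible and the classes are independent, we get $\phi W_\vartheta=W_\theta$, so the two subspaces lie in the same orbit.

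\emph{The annihilator statement.} By the formula recalled earlier, $Ann(\mathcal{L}_\theta)=(Ann(\theta)\cap Ann(\mathcal{L}))\oplus\mathbb V$. The condition $W_\theta\in T_s(\mathcal{L})$ says exactly that $\bigcap_i Ann(\theta_i)\cap Ann(\mathcal{L})=0$, and $Ann(\theta)=\bigcap_i Ann(\theta_i)$. Hence $Ann(\mathcal{L}_\theta)=\mathbb V$. Any isomorphism maps the annihilator onto the annihilator, so it maps $\mathbb V$ into $Ann(\mathcal{L}_\vartheta)\supseteq\mathbb V$. Since $\mathcal{L}_\theta\cong\mathcal{L}_\vartheta$, the two annihilators have the same dimension, namely $s$, which forces $Ann(\mathcal{L}_\vartheta)=\mathbb V$ and $\Phi(\mathbb V)=\mathbb V$.

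No step is a genuine obstacle. The only delicate point is the bookkeeping of the matrix $A$ against the cohomology classes in both directions, in particular checking that independence of the $[\theta_i]$ is what makes the span comparison an equality of subspaces.
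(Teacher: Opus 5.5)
Your proposal is correct and follows essentially the same route as the paper: both directions rest on comparing $\mathbb V$-components to obtain $A\theta=\phi\vartheta-\delta f$, and your dimension argument $\Phi(\mathbb V)=\Phi(Ann(\mathcal{L}_\theta))=Ann(\mathcal{L}_\vartheta)\supseteq\mathbb V$ spells out the last assertion, which the paper leaves implicit. The one slip is a sign in your definition of $\Phi$: with your convention $\vartheta_i(\phi x,\phi y)=\sum_j a_{ij}\theta_j(x,y)+f_i([x,y])$ you need $\Phi(x+v)=\phi(x)+\sum_i f_i(x)e_i+A'v$ with $A'e_j=\sum_i a_{ij}e_i$, because with the minus sign the two sides of your displayed identity differ by $2\sum_i f_i([x,y])e_i$.
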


\begin{proof} Let $\Phi$ be an isomorphism with $\Phi(\mathbb V)=\mathbb V$. Then $\Phi$ induces an automorphism $\phi$ of $\mathcal{L}\cong\mathcal{L}_\theta/\mathbb V\cong \mathcal{L}_\vartheta/\mathbb V$, and we can write $\Phi(x)=\phi(x)+f(x)$ for $x\in\mathcal{L}$ and $\Phi(v)=A v$ for $v\in\mathbb V$, where $f\in {\rm Hom}(\mathcal{L},\mathbb V)$ and $A\in GL(\mathbb V)$. The equality $\Phi([x,y]_{\mathcal{L}_\theta})=[\Phi(x),\Phi(y)]_{\mathcal{L}_\vartheta}$ gives $A\theta(x,y)=\vartheta(\phi(x),\phi(y))-f([x,y])$, i.e. $A\theta=\phi\vartheta-\delta f$. Hence every $[\theta_i]$ is a linear combination of the classes $[\phi\vartheta_j]$, and since both systems are linearly independent we get $W_\theta=\phi W_\vartheta$. Conversely, if $W_\theta=\phi W_\vartheta$, then $A\theta=\phi\vartheta-\delta f$ for some $A\in GL(\mathbb V)$ and $f\in{\rm Hom}(\mathcal{L},\mathbb V)$, and the same formulas define an isomorphism $\Phi$ with $\Phi(\mathbb V)=\mathbb V$. The last assertion follows from $Ann(\mathcal{L}_{\theta}) = (Ann(\theta) \cap Ann(\mathcal{L}) )\oplus \mathbb{V}$.
\end{proof}

Thus there exists a one-to-one correspondence between the set of $Aut(\mathcal{L})$-orbits on $T_{s}(\mathcal{L})$ and the set of isomorphism classes of $E(\mathcal{L},{\mathbb V})$, and the orbits on $G_s({\rm H}^{2}(\mathcal{L},{\mathbb C}))$ classify all $s$-dimensional central extensions $\mathcal{L}_\theta$ without central components, up to isomorphisms preserving $\mathbb V$. For a given Leibniz algebra $\mathcal{L}'$ of dimension $n-s$, we present the following procedure:

\begin{center}
\textsl{The classification procedure:}
\end{center}

\begin{itemize}
\item For a given Leibniz algebra $\mathcal{L}'$ of dimension $n-s$, determine ${\rm Z}^{2}(\mathcal{L}',{\mathbb C}),$  ${\rm B}^{2}(\mathcal{L}',{\mathbb C}),$ ${\rm H}^{2}(\mathcal{L}',{\mathbb C})$ and $Ann(\mathcal{L}')$.

\item Determine the set of $Aut(\mathcal{L}')$-orbits on $G_{s}({\rm H}^{2}(\mathcal{L}',{\mathbb C}))$ and select the orbits lying in $T_{s}(\mathcal{L}')$.

\item For each orbit, construct the Leibniz algebra corresponding to its representative.
\end{itemize}

For $s\geq 3$ we work, exactly as for $s=1,2$, directly with the subspaces $W\in G_s({\rm H})$, where ${\rm H}={\rm H}^{2}(\mathcal{L},{\mathbb C})$, and with bases $\theta_1,\dots,\theta_s$ of $W$. We shall use the following simple observations. Let ${\rm H}_0\subseteq {\rm H}$ be a subspace which is invariant under $Aut(\mathcal{L})$. Then $\phi(W\cap {\rm H}_0)=\phi(W)\cap{\rm H}_0$ and $\phi(W+{\rm H}_0)=\phi(W)+{\rm H}_0$ for every $\phi\in Aut(\mathcal{L})$. Hence $\dim(W\cap{\rm H}_0)$, the orbit of the subspace $V=W\cap {\rm H}_0$ and the orbit of the image of $W$ in ${\rm H}/{\rm H}_0$ do not change when $W$ is replaced by $\phi(W)$. Therefore we may first bring $V$ (or the image of $W$ in ${\rm H}/{\rm H}_0$) to a normal form by a suitable choice of the parameters of the automorphism; after that, two subspaces $W$ and $W'$ for which $V$ has the same normal form lie in the same orbit if and only if they are related by an automorphism which maps this normal form to itself. Only these automorphisms are used in the remaining part of the normalization.

We shall also use the following simple observation. Suppose that, in a suitable basis $\nabla_1,\dots,\nabla_m$ of ${\rm H}$, a subgroup of $Aut(\mathcal{L})$ acts diagonally: $\alpha_i'=\chi_i\alpha_i$, $i=1,\dots,m$. Every $W\in G_s({\rm H})$ has a unique basis in reduced row echelon form (with respect to the order $\nabla_1,\dots,\nabla_m$); the pivot positions $p_1<\dots<p_s$ and the set of non-zero entries of this basis do not change under the diagonal action, and a non-zero entry $x_{rj}$ of the $r$-th basis vector is transformed as $x_{rj}'=\chi_j\chi_{p_r}^{-1}x_{rj}$. Choosing the parameters of the automorphism we normalize as many entries as possible to $1$. If the whole group acts diagonally, the representatives obtained in this way for different pivot positions or different sets of non-zero entries lie in different orbits, and two representatives with the same pattern lie in the same orbit if and only if they are related by an automorphism stabilizing the normalized entries.

Let us introduce some notations. Let $\mathcal{L}$ be a Leibniz algebra with a basis $e_{1},e_{2},\dots,e_{n}$. Then by $\Delta_{i,j}$ we will denote the bilinear form $\Delta_{i,j} \colon \mathcal{L}\times \mathcal{L}\longrightarrow {\mathbb C}$ with $\Delta _{i,j}(e_{l},e_{m})=\delta_{il}\delta_{jm}$. Then the set $\{\Delta_{i,j}:1\leq i, j\leq n\}$ is a basis for the linear space of the bilinear forms on $\mathcal{L}$. Then every $\theta\in{\rm Z}^{2}(\mathcal{L},{\mathbb C})$ can be uniquely written as $\theta=\sum\limits_{1\leq i,j\leq n}c_{ij}\Delta_{{i},{j}}$, where $c_{ij}\in{\mathbb C}$. The $s$-dimensional central extensions of $\mu_n^t$ will be denoted by $\omega^{t,s}_k$ ($k=1,2,\dots$); the central ideal $\mathbb V$ is spanned by $e_{n+1},\dots,e_{n+s}$ (respectively by $e_6,\dots,e_{5+s}$ for $\mu_5^3$ and $\mu_5^4$), and the cocycle $\theta=\sum_{k}\theta_k e_{n+k}$ corresponds to a basis $[\theta_1],\dots,[\theta_s]$ of the subspace $W$. A representative $W$ which lies in $T_s$ is marked by an asterisk.

\section{Central extensions of naturally graded 2-filiform Leibniz algebras}

\begin{proposition}\label{prop10} Let $\mu_n^1,\mu_n^2$ $(n\geq 5)$, $\mu_5^3$ and $\mu_5^4$ be the 2-filiform Leibniz algebras defined in Theorems \ref{thm2.4} and \ref{thm2.5}. Then:
\begin{itemize}
\item A basis of ${\rm Z}^{2}(\mu_k^t,\mathbb{C})$ is formed by the following cocycles
\[\begin{array}{l}
{\rm Z}^{2}(\mu_n^1,\mathbb{C})=\langle\Delta_{1,1},\Delta_{1,n-1},\Delta_{i,1}\ (2\leq i\leq n-1), \Delta_{n-1,n-1}, \Delta_{2,n-1}+\Delta_{n,1}, \Delta_{n,n-1}\rangle, \\[1mm]
{\rm Z}^{2}(\mu_n^2,\mathbb{C})=\langle\Delta_{1,1},\Delta_{1,n-1},\Delta_{i,1}+\Delta_{i,n-1}\ (2\leq i\leq n-2), \Delta_{n-1,1}, \Delta_{n-1,n-1}, -\Delta_{2,1}+\Delta_{n,1}, \Delta_{n,n-1}\rangle, \\[1mm]
{\rm Z}^{2}(\mu_5^3,\mathbb{C})=\langle\Delta_{1,1},\Delta_{1,4},-i\Delta_{2,1}+\Delta_{2,4}, \Delta_{4,1},\Delta_{4,4}, i\Delta_{2,1}+\Delta_{5,1}, \Delta_{5,4}\rangle, \\[1mm]
{\rm Z}^{2}(\mu_5^4,\mathbb{C})=\langle\Delta_{1,1},\Delta_{1,4},\Delta_{2,1}, \Delta_{4,1},\Delta_{4,4}, \Delta_{2,4}+\Delta_{5,1}, \Delta_{5,4}\rangle.
\end{array}\]
\item A basis of ${\rm B}^2(\mu_k^t,\mathbb{C})$  is formed by the following coboundaries
\[\begin{array}{l}
{\rm B}^{2}(\mu_n^1,\mathbb{C})=\langle\Delta_{i,1}\ (1\leq i\leq n-3), \Delta_{1,n-1}\rangle, \\[1mm]
{\rm B}^{2}(\mu_n^2,\mathbb{C})=\langle\Delta_{i,1}+\Delta_{i,n-1}\ (1\leq i\leq n-3), \Delta_{1,n-1}\rangle, \\[1mm]
{\rm B}^{2}(\mu_5^3,\mathbb{C})=\langle\Delta_{1,1}+i\Delta_{1,4}, \Delta_{2,1}+i\Delta_{2,4}+\Delta_{5,4}, \Delta_{1,4}\rangle, \\[1mm]
{\rm B}^{2}(\mu_5^4,\mathbb{C})=\langle\Delta_{1,1}, \Delta_{2,1}+\Delta_{5,4}, \Delta_{1,4}\rangle.
\end{array}\]
\item A basis of ${\rm H}^2(\mu_k^t,\mathbb{C})$ is formed by the following classes
\[\begin{array}{l}
{\rm H}^2(\mu_n^1,\mathbb{C})=\langle[\Delta_{n-2,1}], [\Delta_{n-1,1}], [\Delta_{n-1,n-1}], [\Delta_{2,n-1}+\Delta_{n,1}], [\Delta_{n,n-1}] \rangle,\\[1mm]
{\rm H}^2(\mu_n^2,\mathbb{C})=\langle[\Delta_{n-2,1}+\Delta_{n-2,n-1}], [\Delta_{n-1,1}], [\Delta_{n-1,n-1}], [-\Delta_{2,1}+\Delta_{n,1}], [\Delta_{n,n-1}] \rangle,\\[1mm]
{\rm H}^{2}(\mu_5^3,\mathbb{C})=\langle [-i\Delta_{2,1}+\Delta_{2,4}], [\Delta_{4,1}], [\Delta_{4,4}], [i\Delta_{2,1}+ \Delta_{5,1}]\rangle, \\[1mm]
{\rm H}^{2}(\mu_5^4,\mathbb{C})=\langle[\Delta_{2,1}], [\Delta_{4,1}], [\Delta_{4,4}], [\Delta_{2,4}+\Delta_{5,1}]\rangle.
\end{array}\]
\end{itemize}
In particular, $\dim {\rm H}^2(\mu_n^1,\mathbb{C})=\dim {\rm H}^2(\mu_n^2,\mathbb{C})=5$ and $\dim {\rm H}^2(\mu_5^3,\mathbb{C})=\dim {\rm H}^2(\mu_5^4,\mathbb{C})=4$. Moreover,
$Ann(\mu_n^1)=Ann(\mu_n^2)=\langle e_{n-2},e_n\rangle$ and $Ann(\mu_5^3)=Ann(\mu_5^4)=\langle e_3\rangle$.
\end{proposition}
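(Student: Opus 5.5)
The plan is a direct computation, done separately for each of the four algebras. We write a general bilinear form $\theta=\sum_{i,j}c_{ij}\Delta_{i,j}$ and impose the cocycle identity
\[
\theta(e_a,[e_b,e_c])=\theta([e_a,e_b],e_c)-\theta([e_a,e_c],e_b)
\]
on all basis triples $(e_a,e_b,e_c)$. Because the multiplication tables are sparse, most triples give trivial equations. The useful ones come from choosing $b,c\in\{1,n-1\}$ (respectively $b,c\in\{1,4\}$ in dimension five), since only right multiplications by these generators are nonzero.

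For $\mu_n^1$ we proceed as follows.
\begin{itemize}
\item Taking $c=1$ and $b$ arbitrary gives $\theta(e_a,e_{b+1})$ in terms of $\theta(e_{a+1},e_b)$ and $\theta([e_a,e_1],e_b)$.
\item Taking $b=1,c=1$ gives $\theta(e_a,e_2)=\theta(e_{a+1},e_1)-\theta(e_{a+1},e_1)=0$ for $a\le n-3$. Iterating with $b=i$ kills all $c_{a,j}$ with $2\le j\le n-2$.
\item Taking $c=n-1$, so that $[e_1,e_{n-1}]=e_n$, links $c_{a,n}$ with $c_{a+1,n-1}$ and $c_{n,\cdot}$. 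This produces the relation forcing $c_{2,n-1}=c_{n,1}$, and it shows that $c_{i,n-1}=0$ for $3\le i\le n-2$.
\end{itemize}
Collecting the surviving free parameters yields exactly the listed basis of ${\rm Z}^2$. Its dimension is $n+4$.

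The same scheme works for $\mu_n^2$, except that the extra products $[e_i,e_{n-1}]=e_{i+1}$ and $[e_1,e_{n-1}]=e_2+e_n$ couple $c_{i,1}$ with $c_{i,n-1}$. This coupling explains the combinations $\Delta_{i,1}+\Delta_{i,n-1}$ and $-\Delta_{2,1}+\Delta_{n,1}$. For $\mu_5^3$ and $\mu_5^4$ the algebra is five-dimensional, so the verification is a finite, fully explicit check on the $25$ coefficients.

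To double-check each ${\rm Z}^2$, we verify directly that every listed form satisfies the identity. We also confirm that the linear system derived above has no further solutions, by counting its rank.

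The coboundaries are immediate. Since $\delta f(x,y)=f([x,y])$, the space ${\rm B}^2$ is spanned by the forms $\delta e_k^*$, where $e_k^*$ runs over the dual basis vectors of $[\mathcal L,\mathcal L]$. For $\mu_n^1$ we get $\delta e_{k+1}^*=\Delta_{k,1}$ for $1\le k\le n-3$ and $\delta e_n^*=\Delta_{1,n-1}$. For $\mu_n^2$ we get $\delta e_2^*=\Delta_{1,1}+\Delta_{1,n-1}$, $\delta e_{k+1}^*=\Delta_{k,1}+\Delta_{k,n-1}$ for $2\le k\le n-3$, and $\delta e_n^*=\Delta_{1,n-1}$. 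For $\mu_5^3$ we get $\delta e_2^*=\Delta_{1,1}+i\Delta_{1,4}$, $\delta e_3^*=\Delta_{2,1}+i\Delta_{2,4}+\Delta_{5,4}$ and $\delta e_5^*=\Delta_{1,4}$; the case of $\mu_5^4$ is analogous. These forms are visibly linearly independent, because they have distinct leading monomials.

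The cohomology space ${\rm H}^2={\rm Z}^2/{\rm B}^2$ is then obtained by deleting from the ${\rm Z}^2$ basis the elements that lie in ${\rm B}^2$, or that become them after subtracting coboundaries. For $\mu_n^1$ this removes $\Delta_{1,1},\dots,\Delta_{n-3,1}$ and $\Delta_{1,n-1}$, leaving the five listed classes and giving dimension $(n+4)-(n-2)=5$. For $\mu_5^3$ we use $\Delta_{1,1}\equiv -i\Delta_{1,4}\equiv 0$ and $\Delta_{5,4}\equiv -\Delta_{2,1}-i\Delta_{2,4}$ modulo ${\rm B}^2$, which leaves $7-3=4$ classes. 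The annihilators are read directly off the multiplication tables: $e_{n-2}$ and $e_n$ neither multiply nor appear as multipliers in $\mu_n^{1,2}$, and in $\mu_5^{3,4}$ only $e_3$ has this property, since $e_5$ multiplies $e_4$.

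The main obstacle is making sure the cocycle system has been solved completely, with no solution overlooked. This matters especially for $\mu_n^2$, where the two families of right multiplications interlock, and near the top indices $n-2,n-1,n$, where the chains $e_i\mapsto e_{i+1}$ terminate and boundary cases arise. We would handle this by organizing the equations by the weight of the natural grading (with $e_1$ and $e_{n-1}$ of degree $1$, $e_i$ of degree $i$ and $e_n$ of degree $2$). Each graded piece then gives a small system, which we solve separately.
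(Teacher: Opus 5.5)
Your route is the same direct computation the paper relies on; the paper states the proposition without a written proof. You solve the cocycle equations on basis triples, obtain ${\rm B}^2$ from the forms $\delta e_k^*$ with $e_k$ running over a basis of $[\mathcal{L},\mathcal{L}]$, and pass to the quotient. The target lists are right, but your treatment of $\mu_n^1$ contains an error, and it shows in your count.

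\emph{The count.} ${\rm Z}^2(\mu_n^1)$ has dimension $n+3$, since the listed basis has $2+(n-2)+3$ elements, not $n+4$. Also $(n+4)-(n-2)=6$, not $5$.

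\emph{The source of the error.} The triples with $c=n-1$ do not give $c_{2,n-1}=c_{n,1}$. The only such triple involving these coefficients is $(e_1,e_1,e_{n-1})$, and it gives $c_{1,n}=c_{2,n-1}-c_{n,1}$. Moreover $c_{1,n}$ occurs in no other equation, because $e_1\notin[\mathcal{L},\mathcal{L}]$. The relation $c_{n,1}=c_{2,n-1}$ comes from the triple $(e_1,e_{n-1},e_1)$. Its left side is $\theta(e_1,[e_{n-1},e_1])=0$ and its right side is $\theta(e_n,e_1)-\theta(e_2,e_{n-1})$. Only these two triples together give $c_{1,n}=0$. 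Drop the second one and $\Delta_{1,n}+\Delta_{2,n-1}$ satisfies every remaining equation, so the dimension rises to exactly the $n+4$ you state.

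\emph{The same misattribution elsewhere.} The vanishing $c_{i,n-1}=0$ for $3\le i\le n-2$ comes from $(e_{i-1},e_{n-1},e_1)$. The triple $(e_{i-1},e_1,e_{n-1})$ only gives $c_{i-1,n}=c_{i,n-1}$. For $\mu_n^2$ the triples $(e_a,e_{n-1},e_1)$ are again the decisive ones. They give $c_{2,1}+c_{n,1}=c_{2,n-1}$ and $c_{j,1}=c_{j,n-1}$ for $3\le j\le n-2$. This is exactly where the basis elements $\Delta_{i,1}+\Delta_{i,n-1}$ and $-\Delta_{2,1}+\Delta_{n,1}$ come from.

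Three smaller points also need fixing.

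\emph{Which triples to use.} Your opening restriction to $b,c\in\{1,n-1\}$ is too narrow: with both indices in that set, columns $3,\dots,n-2$ are unconstrained. You need $c\in\{1,n-1\}$ with $b$ arbitrary, which is what you actually do later.

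\emph{The column-$2$ equation.} The triple $(e_a,e_1,e_1)$ gives $c_{a,2}=0$ for every $a$, not only $a\le n-3$; for $a\ge n-2$ both terms on the right vanish. You need every $a$ for the induction $c_{a,j}=-c_{a+1,j-1}$ that kills columns $2,\dots,n-2$; it already uses $c_{n-2,2}$.

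\emph{The annihilators.} Showing that $e_{n-2}$ and $e_n$ (respectively $e_3$) annihilate gives only one inclusion. For the other, take a general $x=\sum x_ie_i$. Then $[x,e_1]=0$ forces $x_i=0$ for $i\le n-3$, and the $e_n$-component of $[e_1,x]$ is $x_{n-1}$. In dimension $5$ use $[x,e_1]$, $[x,e_4]$ and $[e_1,x]$ in the same way.

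With these corrections the argument is complete.
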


\begin{remark}\label{rem-34} No cocycle of $\mu_5^3$ or of $\mu_5^4$ involves the index $3$, i.e. $e_3\in Ann(\theta)$ for every $\theta\in {\rm Z}^{2}(\mu_5^t,\mathbb{C})$, $t=3,4$. Since $Ann(\mu_5^3)=Ann(\mu_5^4)=\langle e_3\rangle$, we get $T_s(\mu_5^3)=T_s(\mu_5^4)=\emptyset$ for all $s$: every central extension $(\mu_5^t)_\theta$, $t=3,4$, has the annihilator $\langle e_3\rangle\oplus\mathbb V$ of dimension $s+1$. Therefore the $s$-dimensional central extensions of $\mu_5^3$ and $\mu_5^4$ can also be obtained as $(s+1)$-dimensional central extensions of the four-dimensional algebra $\mu_5^3/\langle e_3\rangle\cong\mu_5^4/\langle e_3\rangle$, which is given by $[e_1,e_1]=e_2,\ [e_1,e_4]=e_5$ (for $\mu_5^3$ one replaces $e_5$ by $ie_2+e_5$).
\end{remark}

\begin{proposition}\label{prop-aut} The automorphism groups of the algebras $\mu_n^1,\mu_n^2,\mu_5^3,\mu_5^4$ consist of the following linear maps $\phi_k^n$ (the $j$-th column of a matrix is formed by the coordinates of $\phi(e_j)$):
{\small
\[\phi_{1}^n= \begin{pmatrix}
a_{1} &0  & 0 &\dots  &  0&  0& 0 \\
a_{2} &a_{1}^2  & 0 & \dots &  0& 0 & 0 \\
a_{3}& a_1a_2 & a_{1}^3 &\dots  &  0&  0& 0 \\
\vdots & \vdots & \vdots &\ddots  & \vdots &  \vdots& \vdots \\
a_{n-2} &a_1a_{n-3}  &a_1^2a_{n-4}  &\dots  & a_{1}^{n-2} &  b_{n-2}& 0 \\
a_{n-1} &0  &0  &\dots  & 0 &  b_{n-1}& 0 \\
a_{n} & a_1a_{n-1} & 0 & \dots &  0& b_n & a_1b_{n-1} \\
\end{pmatrix},\]
\[\phi_{2}^n=\begin{pmatrix}
a_{1} &0  & 0 &\dots  &  0&  0& 0 \\
a_{2} &a_{1}b_{n-1}  & 0 & \dots &  0& 0 & 0 \\
a_{3}& a_2b_{n-1} & a_{1}b_{n-1}^2 &\dots  &  0&  0& 0 \\
\vdots & \vdots & \vdots &\ddots  & \vdots &  \vdots& \vdots \\
a_{n-2} &a_{n-3}b_{n-1}  &a_{n-4}b_{n-1}^2  &\dots  & a_{1}b_{n-1}^{n-3} &  b_{n-2}& 0 \\
a_{n-1} &0  &0  &\dots  & 0 &  b_{n-1}& 0 \\
a_{n} & a_1a_{n-1} & 0 & \dots &  0& b_n & a_1^2 \\
\end{pmatrix},\quad b_{n-1}=a_1+a_{n-1},\]
\[\phi_{3}^5=
\begin{pmatrix}
a_{1} &0  & 0 &  0& 0 \\
a_{2} &a_{1}(a_1+ia_4)  & 0 & 0 & -a_1a_4 \\
a_{3}& a_{2}(a_1+ia_4)+a_4a_5 &a_{1}^2b_4  &  b_3& a_1a_5-a_2a_4+ia_4a_5 \\
a_{4} &0  &0  &  b_{4} & 0 \\
a_{5} & a_1a_4 & 0 & 0 & a_{1}(a_1+ia_4) \\
\end{pmatrix}, \quad b_{4}=a_1+2ia_{4},\]
\[\phi_{4}^5=
\begin{pmatrix}
a_{1} &0  & 0 &  0& 0 \\
a_{2} &a_{1}^2  & 0 & 0 & 0 \\
a_{3}& a_{1}a_2 &a_{1}^3  &  b_3& \varepsilon a_1a_5 \\
0 &0  &0  &  \varepsilon a_{1} & 0 \\
a_{5} & 0 & 0 & 0 & \varepsilon a_{1}^{2} \\
\end{pmatrix}, \quad \varepsilon=\pm1,\]}
where $a_1\neq0$, $b_{n-1}\neq0$, $b_4\neq0$ and the remaining parameters are arbitrary. We have $\det(\phi_1^n)=a_1^{\frac{n^2-3n+4}{2}}b_{n-1}^2$, $\det(\phi_2^n)=a_1^{n}b_{n-1}^{\frac{n^2-5n+8}{2}}$, $\det(\phi_3^5)=a_1^6(a_1+2ia_4)^3$ and $\det(\phi_4^5)=a_1^{9}$.
\end{proposition}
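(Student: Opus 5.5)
The statement to prove is Proposition \ref{prop-aut}, describing $Aut$ of the four algebras. The approach is to determine every automorphism from its values on the generators. Each of these algebras is generated by $e_1$ and $e_{n-1}$ (by $e_1,e_4$ when $n=5$), since the remaining basis vectors are iterated products of these two. Hence an automorphism $\phi$ is fixed by $\phi(e_1)=\sum_k a_ke_k$ and $\phi(e_{n-1})=\sum_k b_ke_k$, and all other columns follow: $\phi(e_{i+1})=[\phi(e_i),\phi(e_1)]$ for $i\le n-3$, and $\phi(e_n)=[\phi(e_1),\phi(e_{n-1})]$ (for $\mu_n^2$, minus $\phi(e_2)$). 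We then impose the remaining defining relations and the zero products. Conversely, every map of the displayed shape with nonzero determinant preserves all products, and we verify this on the generators using the Leibniz identity. Bijectivity is then exactly the determinant condition.

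The first constraint is that $\phi(e_{n-1})$ must lie outside $\mathcal{L}^2$, and every $\phi$ must preserve the filtration $\mathcal{L}^k$. Next we impose the zero products, in particular $[e_{n-1},e_1]=0$ and $[e_{n-1},e_{n-1}]=0$.

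\begin{itemize}
\item For $\mu_n^1$, the identity $[\phi(e_{n-1}),\phi(e_1)]=0$ forces $b_1=0$ and then $b_2=\dots=b_{n-3}=0$, because $b_1a_1e_2+\dots$ must vanish. The identity $[\phi(e_{n-1}),\phi(e_{n-1})]=0$ then forces nothing further. The vanishing of $[\phi(e_1),\phi(e_i)]$ for $2\le i\le n-2$ and of $[\phi(e_i),\phi(e_{n-1})]$ gives the zero entries in the lower rows. The $n$-th column is $a_1b_{n-1}e_n$ plus corrections, and we check that these corrections vanish. For the relation $[e_1,e_1]=e_2$, note that $[\phi(e_1),\phi(e_1)]$ picks up $a_1a_{n-1}e_n$, which gives the $(n,2)$ entry.
\item For $\mu_n^2$, right multiplication by $e_{n-1}$ acts like that by $e_1$ on $e_2,\dots,e_{n-3}$. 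Therefore $R_{\phi(e_1)}$ acts on $\mathcal{L}^2$ as $(a_1+a_{n-1})$ times the shift. This produces the factors $b_{n-1}=a_1+a_{n-1}$; we show that the $e_{n-1}$ coefficient of $\phi(e_{n-1})$ must equal $a_1+a_{n-1}$.
\end{itemize}

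For $\mu_5^3$ and $\mu_5^4$ we do a direct $5\times5$ computation with the same method. The sign $\varepsilon=\pm1$ comes from a quadratic condition: the relation $[e_5,e_4]=e_3$ must be compatible with $[e_2,e_1]=e_3$, which gives $b_4^2a_1\cdot a_1=a_1^3\cdot a_1$ up to scaling, i.e. $b_4=\pm a_1$. The determinant formulas follow from the triangular block structure, since each determinant is the product of the diagonal entries. For $\phi_1^n$ this product is $a_1^{1+2+\dots+(n-2)}\,b_{n-1}\,a_1b_{n-1}$, which gives the exponent $\frac{(n-2)(n-1)}2+1=\frac{n^2-3n+4}2$.

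The main obstacle will be the bookkeeping for $\mu_n^2$ and $\mu_5^3$. There the relations $[e_1,e_{n-1}]=e_2+e_n$ (respectively $ie_2+e_5$) couple the two generator images nontrivially, and we must confirm that the stated sub-diagonal entries, such as $a_{n-3}b_{n-1}$, are forced rather than free. We will handle this by induction on $i$ for the column $\phi(e_{i+1})$, comparing the computations of $\phi(e_{i+1})$ via $R_{e_1}$ and via $R_{e_{n-1}}$.
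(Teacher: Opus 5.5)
The paper states this proposition without proof, so there is no argument of the paper to compare routes with. Your plan is the natural one: fix $\phi$ by $\phi(e_1)$ and $\phi(e_{n-1})$, impose the relations, and check the converse via the Leibniz identity. Carried out, it does reproduce all four matrices.

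One step, however, fails as stated: for $\phi_3^5$ the determinant is \emph{not} the product of the diagonal entries. In the order $e_1,e_4,e_2,e_5,e_3$ the matrix is only block lower triangular. It has a genuine $2\times2$ block on $\langle e_2,e_5\rangle$:
\[
\begin{pmatrix} a_1(a_1+ia_4) & -a_1a_4\\ a_1a_4 & a_1(a_1+ia_4)\end{pmatrix},\qquad a_1^2\bigl((a_1+ia_4)^2+a_4^2\bigr)=a_1^3(a_1+2ia_4).
\]
It is this block that gives $\det\phi_3^5=a_1\cdot b_4\cdot a_1^3b_4\cdot a_1^2b_4=a_1^6b_4^3$. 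The diagonal product is instead $a_1^5(a_1+ia_4)^2(a_1+2ia_4)^2$. Combined with your criterion ``bijectivity is exactly the determinant condition'', it would add the spurious restriction $a_1+ia_4\neq0$. That restriction is false. Take $a_1=1$, $a_4=i$ and all other parameters $0$: then $\phi(e_1)=e_1+ie_4$, $\phi(e_2)=ie_5$, $\phi(e_3)=-e_3$, $\phi(e_4)=-e_4$, $\phi(e_5)=-ie_2$. This is an automorphism of $\mu_5^3$ whose $(2,2)$ and $(5,5)$ entries vanish.

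Three smaller points in your sketch also need repair.

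\textbf{The sign $\varepsilon$ for $\mu_5^4$.} Your explanation presupposes $\phi(e_3)=a_1^3e_3$, but in general $\phi(e_3)=a_1(a_1^2+a_4^2)e_3$. You first need $a_4=0$. Once $\phi(e_4)=b_3e_3+b_4e_4$, this comes from $[e_2,e_4]=0$, since $[\phi(e_2),\phi(e_4)]=a_1a_4b_4e_3$. This is also why the $(4,1)$ entry of $\phi_4^5$ is $0$, unlike the other three cases. Only then does $[e_5,e_4]=e_3$ give $a_1b_4^2=a_1^3$.

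\textbf{The step $b_i=0$ for $\mu_n^2$.} Here $[\phi(e_{n-1}),\phi(e_1)]=(a_1+a_{n-1})\sum_{i\le n-3}b_ie_{i+1}+b_1a_{n-1}e_n$. So before concluding $b_1=\dots=b_{n-3}=0$ you must show $a_1+a_{n-1}\neq0$. It follows from injectivity: otherwise $\phi(e_2)\in\langle e_n\rangle$ and $\phi(e_3)=0$.

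\textbf{The converse.} Your check is valid if it means verifying $\phi([x,g])=[\phi(x),\phi(g)]$ for every basis vector $x$ and every $g\in\{e_1,e_{n-1}\}$. The set of $y$ with $\phi([x,y])=[\phi(x),\phi(y)]$ for all $x$ is a subalgebra by the Leibniz identity, so this suffices. Checking only products of two generators would not: in $\mu_n^2$, for example, $b_{n-1}=a_1+a_{n-1}$ comes from $[e_2,e_{n-1}]=e_3$.
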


\subsection{Central extensions of $\mu_n^1$} Let us use the following notations:
\[\nabla_1=[\Delta_{n-2,1}], \ \nabla_2=[\Delta_{n-1,1}], \ \nabla_3=[\Delta_{n-1,n-1}], \ \nabla_4= [\Delta_{2,n-1}+\Delta_{n,1}], \ \nabla_5= [\Delta_{n,n-1}].\]
Take $\theta=\alpha_1\nabla_1+\alpha_2\nabla_2+\alpha_3\nabla_3+\alpha_4\nabla_4+\alpha_5\nabla_5\in {\rm H}^{2}(\mu_n^1,\mathbb{C}).$
Computing $\phi_1^n\theta(x,y)=\theta(\phi_1^n(x),\phi_1^n(y))$ and reducing modulo ${\rm B}^{2}(\mu_n^1,\mathbb{C})$
we have that the action of ${\rm Aut} (\mu_n^1)$ on the subspace
$\langle \alpha_1\nabla_1+\alpha_2\nabla_2+\alpha_3\nabla_3+\alpha_4\nabla_4+\alpha_5\nabla_5  \rangle$
is given by
$\langle \alpha_1^\prime\nabla_1+\alpha_2^\prime\nabla_2+\alpha_3^\prime\nabla_3+\alpha_4^\prime\nabla_4+\alpha_5^\prime\nabla_5\rangle,$
where
\[\begin{array}{l}
\alpha_1^\prime=a_1^{n-1}\alpha_1, \ \alpha_2^\prime=a_1(b_{n-2}\alpha_1+b_{n-1}\alpha_2+b_n\alpha_4)+a_{n-1}(b_{n-1}\alpha_3+b_n\alpha_5), \\
\alpha_3^\prime=b_{n-1}(b_{n-1}\alpha_3+b_n\alpha_5), \ \alpha_4^\prime=a_1b_{n-1}(a_1\alpha_4+a_{n-1}\alpha_5),\ \alpha_5^\prime=a_1b_{n-1}^2\alpha_5.
\end{array}\]
Note that only the parameters $a_1,a_{n-1},b_{n-2},b_{n-1},b_n$ appear; $a_1$ and $b_{n-1}$ are arbitrary non-zero numbers and $a_{n-1},b_{n-2},b_n$ are arbitrary. For $a_1=b_{n-1}=1$ the formulas above take the form
\begin{equation}\label{dagger} \alpha_2\mapsto\alpha_2+b_{n-2}\alpha_1+a_{n-1}\alpha_3+b_n\alpha_4+a_{n-1}b_n\alpha_5,\  \alpha_3\mapsto\alpha_3+b_n\alpha_5,\ \alpha_4\mapsto\alpha_4+a_{n-1}\alpha_5,\end{equation}
the coefficients $\alpha_1$, $\alpha_5$ being unchanged, and for $a_{n-1}=b_{n-2}=b_n=0$ they take the diagonal form
\begin{equation}\label{ddagger}\qquad \alpha_1'=a_1^{n-1}\alpha_1,\  \alpha_2'=a_1b_{n-1}\alpha_2,\  \alpha_3'=b_{n-1}^2\alpha_3,\  \alpha_4'=a_1^2b_{n-1}\alpha_4,\  \alpha_5'=a_1b_{n-1}^2\alpha_5.\end{equation}
Every automorphism is a composition of automorphisms of these two types. In particular, the line $\langle\nabla_2\rangle$ and the hyperplane ${\rm H}_0=\langle\nabla_1,\nabla_2,\nabla_3,\nabla_4\rangle=\{\theta:\alpha_5=0\}$ are invariant subspaces, and on the quotient ${\rm H}_0/\langle\nabla_2\rangle$ the action is diagonal: $\alpha_1'=a_1^{n-1}\alpha_1$, $\alpha_3'=b_{n-1}^2\alpha_3$, $\alpha_4'=a_1^2b_{n-1}\alpha_4$.

Let $x=\lambda e_{n-2}+\mu e_n\in Ann(\mu_n^1)$. Then $\theta(x,e_1)=\lambda\alpha_1+\mu\alpha_4$, $\theta(x,e_{n-1})=\mu\alpha_5$ and all the other values $\theta(x,e_j)$, $\theta(e_j,x)$ vanish. Hence a subspace $W$ lies in $T_s(\mu_n^1)$ if and only if the system $\lambda\alpha_1+\mu\alpha_4=0$, $\mu\alpha_5=0$, $\theta\in W$, has only the trivial solution.

\subsubsection{$1$-dimensional central extensions of $\mu_n^1$}
The one-dimensional central extensions of $\mu_n^1$ were classified in \cite{Adashev2023}. In the notation of the present paper the result is as follows: there are $11$ orbits (for $n=5$ the orbit $\langle\nabla_1+\nabla_3+\nabla_4\rangle$ is replaced by the family $\langle \alpha\nabla_1+\nabla_3+\nabla_4 \rangle$):
\begingroup\small
\begin{longtable}{ll}
$\omega^{1,1}_{1}$: & $\langle \nabla_{1}+\nabla_{5}\rangle^{\ast}$\\
$\omega^{1,1}_{2}$: & $\langle \nabla_{5}\rangle$\\
$\omega^{1,1}_{3}$: & $\langle \nabla_{2}+\nabla_{5}\rangle$\\
$\omega^{1,1}_{4}$: & $\langle \nabla_{1}\rangle$\\
$\omega^{1,1}_{5}$: & $\langle \nabla_{3}\rangle$\\
$\omega^{1,1}_{6}$: & $\langle \nabla_{4}\rangle$\\
$\omega^{1,1}_{7}$: & $\langle \nabla_{1}+\nabla_{3}\rangle$\\
$\omega^{1,1}_{8}$: & $\langle \nabla_{1}+\nabla_{4}\rangle$\\
$\omega^{1,1}_{9}$: & $\langle \nabla_{3}+\nabla_{4}\rangle$\\
$\omega^{1,1}_{10}$: & $\langle \nabla_{1}+\nabla_{3}+\nabla_{4}\rangle$ \ ($n\neq 5$); $\langle \alpha \nabla_{1}+\nabla_{3}+\nabla_{4}\rangle$ \ ($n=5,\ \alpha\neq0$)\\
$\omega^{1,1}_{11}$: & $\langle \nabla_{2}\rangle$\\
\end{longtable}
\endgroup

By the criterion above, only the orbit $\langle \nabla_1+\nabla_5\rangle$ lies in $T_1(\mu_n^1)$; the other one-dimensional extensions have the annihilator of dimension $2$ or $3$.

\subsubsection{$2$-dimensional central extensions of $\mu_n^1$}
Let $W\in G_2({\rm H}^{2}(\mu_n^1,\mathbb{C}))$.

\textbf{Case A.} Suppose that $W$ contains an element with $\alpha_5\neq0$. Then we may assume that $W$ is generated by
\[\theta_1  = \alpha_1 \nabla_1+ \alpha_2 \nabla_2+\alpha_3 \nabla_3+\alpha_4 \nabla_4+\alpha_5\nabla_5,\
\theta_2  = \beta_1 \nabla_1+ \beta_2 \nabla_2+\beta_3\nabla_3+\beta_4\nabla_4,\  \alpha_5\neq0.\]
The line $\langle\theta_2\rangle=W\cap\{\alpha_5=0\}$ is uniquely determined by $W$. Putting $a_{n-1}=-\frac{a_1\alpha_4}{\alpha_5}$, $b_n=-\frac{b_{n-1}\alpha_3}{\alpha_5}$ we obtain $\alpha_3'=\alpha_4'=0$, so we may assume $\alpha_3=\alpha_4=0$. The automorphisms with $a_{n-1}=b_n=0$ act by
\[\begin{array}{l}
\alpha_1^\prime=a_1^{n-1}\alpha_1, \ \alpha_2^\prime=a_1(b_{n-2}\alpha_1+b_{n-1}\alpha_2), \ \alpha_5^\prime=a_1b_{n-1}^2\alpha_5,\\
\beta_1^\prime=a_1^{n-1}\beta_1, \ \beta_2^\prime=a_1(b_{n-2}\beta_1+b_{n-1}\beta_2), \ \beta_3^\prime=b_{n-1}^2\beta_3, \ \beta_4^\prime=a_1^2b_{n-1}\beta_4.
\end{array}\]
Moreover, we can replace $\theta_1$ by $\theta_1+t\theta_2$ and then apply the automorphism with $a_1=b_{n-1}=1$, $b_{n-2}=0$, $a_{n-1}=-\frac{t\beta_4}{\alpha_5}$, $b_n=-\frac{t\beta_3}{\alpha_5}$, which restores $\alpha_3=\alpha_4=0$. The result of this transformation is
\begin{equation}\label{ast} \alpha_1\mapsto \alpha_1+t\beta_1,\  \alpha_2\mapsto \alpha_2+t\beta_2-\frac{t^2\beta_3\beta_4}{\alpha_5},\  \beta_2\mapsto\beta_2-\frac{2t\beta_3\beta_4}{\alpha_5},\end{equation}
the other coefficients being unchanged.

\textbf{Case A.1.} Let $\beta_3\beta_4\neq0$. Choosing $t=\frac{\alpha_5\beta_2}{2\beta_3\beta_4}$ in $(\ref{ast})$ we obtain $\beta_2=0$.

If $\beta_1\neq0$, then applying $(\ref{ast})$ with $t=-\frac{\alpha_1}{\beta_1}$ and then choosing $b_{n-2}=-\frac{b_{n-1}\beta_2}{\beta_1}$ we obtain $\alpha_1=\beta_2=0$, i.e. $\theta_1=\alpha_2\nabla_2+\alpha_5\nabla_5$, $\theta_2=\beta_1\nabla_1+\beta_3\nabla_3+\beta_4\nabla_4$.
\begin{itemize}
\item If $\alpha_2\neq0$, then by choosing $b_{n-1}=\frac{\alpha_2}{\alpha_5}$, $a_1=\sqrt{\frac{b_{n-1}\beta_3}{\beta_4}}$ we have the representatives $\langle \nabla_2+\nabla_5,\ \alpha\nabla_1+\nabla_3+\nabla_4\rangle$, $\alpha\neq0$. Since $a_1$ is determined up to sign, $\alpha$ and $(-1)^{n-3}\alpha$ give the same orbit.
\item If $\alpha_2=0$ and $n\neq5$, then by choosing $a_1=\sqrt[n-5]{\frac{\beta_4^2}{\beta_1\beta_3}}$, $b_{n-1}=\frac{a_1^2\beta_4}{\beta_3}$ we have the representative $\langle \nabla_5,\ \nabla_1+\nabla_3+\nabla_4\rangle$. If $n=5$, the number $\frac{\beta_1\beta_3}{\beta_4^2}$ is invariant and we have the representatives $\langle \nabla_5,\ \alpha\nabla_1+\nabla_3+\nabla_4\rangle$, $\alpha\neq0$.
\end{itemize}

If $\beta_1=0$, then $\theta_2=\beta_3\nabla_3+\beta_4\nabla_4$; if moreover $\alpha_1\neq0$, then choosing $b_{n-2}=-\frac{b_{n-1}\alpha_2}{\alpha_1}$ we obtain $\alpha_2=0$.
\begin{itemize}
\item If $\alpha_1=\alpha_2=0$, then by choosing $a_1=1$, $b_{n-1}=\frac{\beta_4}{\beta_3}$ we have the representative $\langle \nabla_5,\ \nabla_3+\nabla_4\rangle$.
\item If $\alpha_1=0$, $\alpha_2\neq0$, then by choosing $b_{n-1}=\frac{\alpha_2}{\alpha_5}$, $a_1=\sqrt{\frac{b_{n-1}\beta_3}{\beta_4}}$ we have the representative $\langle \nabla_2+\nabla_5,\ \nabla_3+\nabla_4\rangle$.
\item If $\alpha_1\neq0$ and $n\neq6$, then by choosing $a_1=\sqrt[n-6]{\frac{\alpha_5\beta_4^2}{\alpha_1\beta_3^2}}$, $b_{n-1}=\frac{a_1^2\beta_4}{\beta_3}$ we have the representative $\langle \nabla_1+\nabla_5,\ \nabla_3+\nabla_4\rangle$. If $n=6$, the number $\frac{\alpha_1\beta_3^2}{\alpha_5\beta_4^2}$ is invariant and we have the representatives $\langle \alpha\nabla_1+\nabla_5,\ \nabla_3+\nabla_4\rangle$, $\alpha\neq0$.
\end{itemize}

\textbf{Case A.2.} Let $\beta_4\neq0$, $\beta_3=0$. Then $(\ref{ast})$ becomes $\alpha_1\mapsto\alpha_1+t\beta_1$, $\alpha_2\mapsto\alpha_2+t\beta_2$.
\begin{itemize}
\item If $\beta_1\neq0$, then choosing $b_{n-2}=-\frac{b_{n-1}\beta_2}{\beta_1}$ and $t=-\frac{\alpha_1}{\beta_1}$ we obtain $\beta_2=\alpha_1=0$. If $\alpha_2=0$, then by choosing $a_1=1$, $b_{n-1}=\frac{\beta_1}{\beta_4}$ we have the representative $\langle \nabla_5,\ \nabla_1+\nabla_4\rangle$; if $\alpha_2\neq0$, then by choosing $b_{n-1}=\frac{\alpha_2}{\alpha_5}$, $a_1=\sqrt[n-3]{\frac{b_{n-1}\beta_4}{\beta_1}}$ we have the representative $\langle \nabla_2+\nabla_5,\ \nabla_1+\nabla_4\rangle$.
\item If $\beta_1=0$, $\beta_2\neq0$, then choosing $t=-\frac{\alpha_2}{\beta_2}$ we obtain $\alpha_2=0$. If $\alpha_1=0$, then by choosing $a_1=\frac{\beta_2}{\beta_4}$, $b_{n-1}=1$ we have the representative $\langle \nabla_5,\ \nabla_2+\nabla_4\rangle$; if $\alpha_1\neq0$, then by choosing $a_1=\frac{\beta_2}{\beta_4}$, $b_{n-1}=\sqrt{\frac{a_1^{n-2}\alpha_1}{\alpha_5}}$ we have the representative $\langle \nabla_1+\nabla_5,\ \nabla_2+\nabla_4\rangle$.
\item If $\beta_1=\beta_2=0$, then $\theta_2=\nabla_4$. If $\alpha_1\neq0$, then by choosing $b_{n-2}=-\frac{b_{n-1}\alpha_2}{\alpha_1}$, $a_1=1$, $b_{n-1}=\sqrt{\frac{\alpha_1}{\alpha_5}}$ we have the representative $\langle \nabla_1+\nabla_5,\ \nabla_4\rangle$. If $\alpha_1=0$, we have the representatives $\langle \nabla_5,\ \nabla_4\rangle$ and (by choosing $b_{n-1}=\frac{\alpha_2}{\alpha_5}$) $\langle \nabla_2+\nabla_5,\ \nabla_4\rangle$, depending on whether $\alpha_2=0$ or not.
\end{itemize}

\textbf{Case A.3.} Let $\beta_3\neq0$, $\beta_4=0$. Then again $(\ref{ast})$ becomes $\alpha_1\mapsto\alpha_1+t\beta_1$, $\alpha_2\mapsto\alpha_2+t\beta_2$, and exactly as in Case A.2 we obtain:
\begin{itemize}
\item for $\beta_1\neq0$: $\langle \nabla_5,\ \nabla_1+\nabla_3\rangle$ (by choosing $a_1=1$, $b_{n-1}=\sqrt{\frac{\beta_1}{\beta_3}}$) and $\langle \nabla_2+\nabla_5,\ \nabla_1+\nabla_3\rangle$ (by choosing $b_{n-1}=\frac{\alpha_2}{\alpha_5}$, $a_1=\sqrt[n-1]{\frac{b_{n-1}^2\beta_3}{\beta_1}}$);
\item for $\beta_1=0$, $\beta_2\neq0$: $\langle \nabla_5,\ \nabla_2+\nabla_3\rangle$ (by choosing $b_{n-1}=1$, $a_1=\frac{\beta_3}{\beta_2}$) and $\langle \nabla_1+\nabla_5,\ \nabla_2+\nabla_3\rangle$ (by choosing $a_1=\sqrt[n-4]{\frac{\alpha_5\beta_2^2}{\alpha_1\beta_3^2}}$, $b_{n-1}=\frac{a_1\beta_2}{\beta_3}$);
\item for $\beta_1=\beta_2=0$: $\langle \nabla_1+\nabla_5,\ \nabla_3\rangle$, $\langle \nabla_5,\ \nabla_3\rangle$, $\langle \nabla_2+\nabla_5,\ \nabla_3\rangle$.
\end{itemize}

\textbf{Case A.4.} Let $\beta_3=\beta_4=0$, $\beta_1\neq0$. Then choosing $b_{n-2}=-\frac{b_{n-1}\beta_2}{\beta_1}$ and $t=-\frac{\alpha_1}{\beta_1}$ we obtain $\theta_2=\nabla_1$, $\theta_1=\alpha_2\nabla_2+\alpha_5\nabla_5$, and we have the representatives $\langle \nabla_5,\ \nabla_1\rangle$, $\langle \nabla_2+\nabla_5,\ \nabla_1\rangle$.

\textbf{Case A.5.} Let $\beta_1=\beta_3=\beta_4=0$. Then $\theta_2=\nabla_2$, choosing $t=-\frac{\alpha_2}{\beta_2}$ we obtain $\alpha_2=0$, and we have the representatives $\langle \nabla_5,\ \nabla_2\rangle$, $\langle \nabla_1+\nabla_5,\ \nabla_2\rangle$ (by choosing $a_1=1$, $b_{n-1}=\sqrt{\frac{\alpha_1}{\alpha_5}}$).

\textbf{Case B.} Suppose that $\alpha_5=0$ for all elements of $W$. Then the action is
\[\alpha_1^\prime=a_1^{n-1}\alpha_1, \ \alpha_2^\prime=a_1(b_{n-2}\alpha_1+b_{n-1}\alpha_2+b_n\alpha_4)+a_{n-1}b_{n-1}\alpha_3, \ \alpha_3^\prime=b_{n-1}^2\alpha_3, \ \alpha_4^\prime=a_1^2b_{n-1}\alpha_4.\]

\textbf{Case B.1.} Let $\nabla_2\in W$. Then $W=\langle\nabla_2,\theta\rangle$ with $\theta=\alpha_1\nabla_1+\alpha_3\nabla_3+\alpha_4\nabla_4\neq0$, and $\theta$ is determined modulo $\nabla_2$, where the action is diagonal: $\alpha_1'=a_1^{n-1}\alpha_1$, $\alpha_3'=b_{n-1}^2\alpha_3$, $\alpha_4'=a_1^2b_{n-1}\alpha_4$. If only one of $\alpha_1,\alpha_3,\alpha_4$ is non-zero, there is nothing to normalize. If $\alpha_1\alpha_3\neq0$, $\alpha_4=0$, we choose $a_1=1$, $b_{n-1}=\sqrt{\frac{\alpha_1}{\alpha_3}}$; if $\alpha_1\alpha_4\neq0$, $\alpha_3=0$, we choose $a_1=1$, $b_{n-1}=\frac{\alpha_1}{\alpha_4}$; if $\alpha_3\alpha_4\neq0$, $\alpha_1=0$, we choose $a_1=1$, $b_{n-1}=\frac{\alpha_4}{\alpha_3}$. If $\alpha_1\alpha_3\alpha_4\neq0$ and $n\neq5$, we choose $a_1=\sqrt[n-5]{\frac{\alpha_4^2}{\alpha_1\alpha_3}}$, $b_{n-1}=\frac{a_1^2\alpha_4}{\alpha_3}$, while for $n=5$ the number $\frac{\alpha_1\alpha_3}{\alpha_4^2}$ is invariant and we choose $a_1=1$, $b_4=\frac{\alpha_4}{\alpha_3}$. Thus we have the representatives
$\langle \nabla_2,\nabla_1\rangle$, $\langle \nabla_2,\nabla_3\rangle$, $\langle \nabla_2,\nabla_4\rangle$, $\langle \nabla_2,\nabla_1+\nabla_3\rangle$, $\langle \nabla_2,\nabla_1+\nabla_4\rangle$, $\langle \nabla_2,\nabla_3+\nabla_4\rangle$ and $\langle \nabla_2,\nabla_1+\nabla_3+\nabla_4\rangle$ (for $n\neq5$), $\langle \nabla_2,\alpha\nabla_1+\nabla_3+\nabla_4\rangle$, $\alpha\neq0$ (for $n=5$).

\textbf{Case B.2.} Let $\nabla_2\notin W$, $W=\langle\theta_1,\theta_2\rangle$. Then the vectors $(\alpha_1,\alpha_3,\alpha_4)$ and $(\beta_1,\beta_3,\beta_4)$ are linearly independent, and the linear system
\[a_1b_{n-2}\alpha_1+a_{n-1}b_{n-1}\alpha_3+a_1b_n\alpha_4=-a_1b_{n-1}\alpha_2,\ a_1b_{n-2}\beta_1+a_{n-1}b_{n-1}\beta_3+a_1b_n\beta_4=-a_1b_{n-1}\beta_2\]
has a solution $(b_{n-2},a_{n-1},b_n)$. Hence we may assume $\alpha_2=\beta_2=0$, i.e. $W\subseteq\langle\nabla_1,\nabla_3,\nabla_4\rangle$, where the action is diagonal: $\alpha_1'=a_1^{n-1}\alpha_1$, $\alpha_3'=b_{n-1}^2\alpha_3$, $\alpha_4'=a_1^2b_{n-1}\alpha_4$. The two-dimensional subspaces of $\langle\nabla_1,\nabla_3,\nabla_4\rangle$ give the representatives
$\langle \nabla_3,\nabla_4\rangle$, $\langle \nabla_1,\nabla_4\rangle$, $\langle \nabla_1,\nabla_3\rangle$, $\langle \nabla_1+\nabla_3,\nabla_4\rangle$, $\langle \nabla_1+\nabla_4,\nabla_3\rangle$, $\langle \nabla_1,\nabla_3+\nabla_4\rangle$, and, for $W=\langle \nabla_1+x\nabla_4,\nabla_3+y\nabla_4\rangle$ with $xy\neq0$: $\langle \nabla_1+\nabla_4,\nabla_3+\nabla_4\rangle$ for $n\neq5$ (by choosing $a_1=\sqrt[n-5]{xy}$, $b_{n-1}=a_1^2y$) and $\langle \nabla_1+\nabla_4,\nabla_3+\alpha\nabla_4\rangle$, $\alpha\neq0$, for $n=5$ (here $xy$ is invariant).

The orbits obtained are pairwise distinct. Indeed, the property ``$W\subseteq\{\alpha_5=0\}$'' is invariant; in Case A the line $W\cap\{\alpha_5=0\}=\langle\theta_2\rangle$ is invariant and, since $\theta_2$ is transformed by $\beta_1'=a_1^{n-1}\beta_1$, $\beta_3'=b_{n-1}^2\beta_3$, $\beta_4'=a_1^2b_{n-1}\beta_4$, the vanishing of $\beta_1$, $\beta_3$, $\beta_4$ is invariant; the remaining distinctions are given by the vanishing of $\alpha_1$, $\alpha_2$ or $\beta_2$ in the normal forms (which cannot be changed by the stabilizer of the normal form) and by the invariants indicated above. Thus we obtain the following $37$ orbits:
\begingroup\small
\begin{longtable}{ll}
$\omega^{1,2}_{1}$: & $\langle \nabla_{2}+\nabla_{5},\ \alpha \nabla_{1}+\nabla_{3}+\nabla_{4}\rangle^{\ast}$ \ ($\alpha\neq0;\ \alpha\sim-\alpha \text{ if } n \text{ is even}$)\\
$\omega^{1,2}_{2}$: & $\langle \nabla_{5},\ \nabla_{1}+\nabla_{3}+\nabla_{4}\rangle^{\ast}$ \ ($n\neq 5$); $\langle \nabla_{5},\ \alpha \nabla_{1}+\nabla_{3}+\nabla_{4}\rangle^{\ast}$ \ ($n=5,\ \alpha\neq0$)\\
$\omega^{1,2}_{3}$: & $\langle \nabla_{5},\ \nabla_{3}+\nabla_{4}\rangle$\\
$\omega^{1,2}_{4}$: & $\langle \nabla_{2}+\nabla_{5},\ \nabla_{3}+\nabla_{4}\rangle$\\
$\omega^{1,2}_{5}$: & $\langle \nabla_{1}+\nabla_{5},\ \nabla_{3}+\nabla_{4}\rangle^{\ast}$ \ ($n\neq 6$); $\langle \alpha \nabla_{1}+\nabla_{5},\ \nabla_{3}+\nabla_{4}\rangle^{\ast}$ \ ($n=6,\ \alpha\neq0$)\\
$\omega^{1,2}_{6}$: & $\langle \nabla_{5},\ \nabla_{1}+\nabla_{4}\rangle^{\ast}$\\
$\omega^{1,2}_{7}$: & $\langle \nabla_{2}+\nabla_{5},\ \nabla_{1}+\nabla_{4}\rangle^{\ast}$\\
$\omega^{1,2}_{8}$: & $\langle \nabla_{5},\ \nabla_{2}+\nabla_{4}\rangle$\\
$\omega^{1,2}_{9}$: & $\langle \nabla_{1}+\nabla_{5},\ \nabla_{2}+\nabla_{4}\rangle^{\ast}$\\
$\omega^{1,2}_{10}$: & $\langle \nabla_{1}+\nabla_{5},\ \nabla_{4}\rangle^{\ast}$\\
$\omega^{1,2}_{11}$: & $\langle \nabla_{5},\ \nabla_{4}\rangle$\\
$\omega^{1,2}_{12}$: & $\langle \nabla_{2}+\nabla_{5},\ \nabla_{4}\rangle$\\
$\omega^{1,2}_{13}$: & $\langle \nabla_{5},\ \nabla_{1}+\nabla_{3}\rangle^{\ast}$\\
$\omega^{1,2}_{14}$: & $\langle \nabla_{2}+\nabla_{5},\ \nabla_{1}+\nabla_{3}\rangle^{\ast}$\\
$\omega^{1,2}_{15}$: & $\langle \nabla_{5},\ \nabla_{2}+\nabla_{3}\rangle$\\
$\omega^{1,2}_{16}$: & $\langle \nabla_{1}+\nabla_{5},\ \nabla_{2}+\nabla_{3}\rangle^{\ast}$\\
$\omega^{1,2}_{17}$: & $\langle \nabla_{1}+\nabla_{5},\ \nabla_{3}\rangle^{\ast}$\\
$\omega^{1,2}_{18}$: & $\langle \nabla_{5},\ \nabla_{3}\rangle$\\
$\omega^{1,2}_{19}$: & $\langle \nabla_{2}+\nabla_{5},\ \nabla_{3}\rangle$\\
$\omega^{1,2}_{20}$: & $\langle \nabla_{5},\ \nabla_{1}\rangle^{\ast}$\\
$\omega^{1,2}_{21}$: & $\langle \nabla_{2}+\nabla_{5},\ \nabla_{1}\rangle^{\ast}$\\
$\omega^{1,2}_{22}$: & $\langle \nabla_{5},\ \nabla_{2}\rangle$\\
$\omega^{1,2}_{23}$: & $\langle \nabla_{1}+\nabla_{5},\ \nabla_{2}\rangle^{\ast}$\\
$\omega^{1,2}_{24}$: & $\langle \nabla_{2},\ \nabla_{1}\rangle$\\
$\omega^{1,2}_{25}$: & $\langle \nabla_{2},\ \nabla_{3}\rangle$\\
$\omega^{1,2}_{26}$: & $\langle \nabla_{2},\ \nabla_{4}\rangle$\\
$\omega^{1,2}_{27}$: & $\langle \nabla_{2},\ \nabla_{1}+\nabla_{3}\rangle$\\
$\omega^{1,2}_{28}$: & $\langle \nabla_{2},\ \nabla_{1}+\nabla_{4}\rangle$\\
$\omega^{1,2}_{29}$: & $\langle \nabla_{2},\ \nabla_{3}+\nabla_{4}\rangle$\\
$\omega^{1,2}_{30}$: & $\langle \nabla_{2},\ \nabla_{1}+\nabla_{3}+\nabla_{4}\rangle$ \ ($n\neq 5$); $\langle \nabla_{2},\ \alpha \nabla_{1}+\nabla_{3}+\nabla_{4}\rangle$ \ ($n=5,\ \alpha\neq0$)\\
$\omega^{1,2}_{31}$: & $\langle \nabla_{3},\ \nabla_{4}\rangle$\\
$\omega^{1,2}_{32}$: & $\langle \nabla_{1},\ \nabla_{4}\rangle^{\ast}$\\
$\omega^{1,2}_{33}$: & $\langle \nabla_{1},\ \nabla_{3}\rangle$\\
$\omega^{1,2}_{34}$: & $\langle \nabla_{1}+\nabla_{3},\ \nabla_{4}\rangle^{\ast}$\\
$\omega^{1,2}_{35}$: & $\langle \nabla_{1}+\nabla_{4},\ \nabla_{3}\rangle$\\
$\omega^{1,2}_{36}$: & $\langle \nabla_{1},\ \nabla_{3}+\nabla_{4}\rangle^{\ast}$\\
$\omega^{1,2}_{37}$: & $\langle \nabla_{1}+\nabla_{4},\ \nabla_{3}+\nabla_{4}\rangle^{\ast}$ \ ($n\neq 5$); $\langle \nabla_{1}+\nabla_{4},\ \nabla_{3}+\alpha \nabla_{4}\rangle^{\ast}$ \ ($n=5,\ \alpha\neq0$)\\
\end{longtable}
\endgroup

\subsubsection{$3$-dimensional central extensions of $\mu_n^1$}
Let $W\in G_3({\rm H}^{2}(\mu_n^1,\mathbb{C}))$. We use the invariant subspaces $\langle\nabla_2\rangle$ and ${\rm H}_0=\{\theta:\alpha_5=0\}$; the parameters $a_{n-1}$, $b_{n-2}$, $b_n$ are used for removing coefficients by means of the formulas $(\ref{dagger})$, and afterwards the parameters $a_1$, $b_{n-1}$ are used for the normalization by means of the formulas $(\ref{ddagger})$.

\textbf{Case A.} Suppose that $W\not\subseteq{\rm H}_0$, i.e. $W$ contains an element with $\alpha_5\neq0$. Then $V=W\cap{\rm H}_0$ is two-dimensional and $W=V\oplus\langle\theta_1\rangle$, where $\theta_1=\alpha_1\nabla_1+\alpha_2\nabla_2+\alpha_3\nabla_3+\alpha_4\nabla_4+\nabla_5$; the element $\theta_1$ is determined by $W$ up to adding elements of $V$.

\textbf{Case A.1.} Let $\nabla_2\in V$. Then $V=\langle\nabla_2,\kappa\rangle$ with $0\neq\kappa=\kappa_1\nabla_1+\kappa_3\nabla_3+\kappa_4\nabla_4$, and modulo $\nabla_2$ the vector $\kappa$ is transformed by $\kappa_1'=a_1^{n-1}\kappa_1$, $\kappa_3'=b_{n-1}^2\kappa_3$, $\kappa_4'=a_1^2b_{n-1}\kappa_4$; in particular, the vanishing of $\kappa_1,\kappa_3,\kappa_4$ is invariant.
\begin{itemize}
\item Let $\kappa_1\neq0$, $\kappa=\nabla_1+x\nabla_3+y\nabla_4$. Subtracting from $\theta_1$ multiples of $\nabla_2$ and $\kappa$ we may assume $\theta_1=\alpha_3\nabla_3+\alpha_4\nabla_4+\nabla_5$. By $(\ref{dagger})$, choosing $a_{n-1}=-\alpha_4$, $b_n=-\alpha_3$, $b_{n-2}=0$ we obtain $\alpha_3=\alpha_4=0$, while $\kappa$ changes only by a multiple of $\nabla_2$. Hence $W=\langle\nabla_2,\nabla_5,\nabla_1+x\nabla_3+y\nabla_4\rangle$, and by $(\ref{ddagger})$ we have $x'=a_1^{1-n}b_{n-1}^2x$, $y'=a_1^{3-n}b_{n-1}y$. We obtain the representatives $\langle\nabla_2,\nabla_5,\nabla_1\rangle$, $\langle\nabla_2,\nabla_5,\nabla_1+\nabla_3\rangle$ (by choosing $a_1=1$, $b_{n-1}=\frac{1}{\sqrt{x}}$), $\langle\nabla_2,\nabla_5,\nabla_1+\nabla_4\rangle$ (by choosing $a_1=1$, $b_{n-1}=\frac{1}{y}$) and, if $xy\neq0$: $\langle\nabla_2,\nabla_5,\nabla_1+\nabla_3+\nabla_4\rangle$ for $n\neq5$ (by choosing $a_1=\sqrt[n-5]{\frac{y^2}{x}}$, $b_{n-1}=\frac{a_1^{n-3}}{y}$); for $n=5$ the number $\frac{x}{y^2}$ is invariant and we have the representatives $\langle\nabla_2,\nabla_5,\nabla_1+\alpha\nabla_3+\nabla_4\rangle$, $\alpha\neq0$.
\item Let $\kappa_1=0$, $\kappa_3\neq0$, $\kappa=\nabla_3+y\nabla_4$. Subtracting from $\theta_1$ multiples of $\nabla_2$ and $\kappa$ we may assume $\theta_1=\alpha_1\nabla_1+\alpha_4\nabla_4+\nabla_5$, and choosing $a_{n-1}=-\alpha_4$ we obtain $\alpha_4=0$ ($\kappa$ changes only by a multiple of $\nabla_2$). Since all elements of $V$ have $\alpha_1=0$, the coefficient $\alpha_1$ of $\theta_1$ is determined by $W$, and by $(\ref{ddagger})$ it is transformed by $\alpha_1'=a_1^{n-2}b_{n-1}^{-2}\alpha_1$ (after the normalization $\alpha_5'=1$), while $y'=a_1^2b_{n-1}^{-1}y$. We obtain the representatives $\langle\nabla_2,\nabla_3,\nabla_5\rangle$, $\langle\nabla_2,\nabla_3,\nabla_1+\nabla_5\rangle$ (by choosing $a_1=1$, $b_{n-1}=\sqrt{\alpha_1}$), $\langle\nabla_2,\nabla_3+\nabla_4,\nabla_5\rangle$ (by choosing $a_1=1$, $b_{n-1}=y$) and, if $\alpha_1y\neq0$: $\langle\nabla_2,\nabla_3+\nabla_4,\nabla_1+\nabla_5\rangle$ for $n\neq6$ (by choosing $a_1=\sqrt[n-6]{\frac{y^2}{\alpha_1}}$, $b_{n-1}=a_1^2y$); for $n=6$ the number $\frac{\alpha_1}{y^2}$ is invariant and we have the representatives $\langle\nabla_2,\nabla_3+\nabla_4,\alpha\nabla_1+\nabla_5\rangle$, $\alpha\neq0$.
\item Let $\kappa=\nabla_4$. Then we may assume $\theta_1=\alpha_1\nabla_1+\alpha_3\nabla_3+\nabla_5$, and choosing $b_n=-\alpha_3$ we obtain $\alpha_3=0$. As above, $\alpha_1'=a_1^{n-2}b_{n-1}^{-2}\alpha_1$, and we have the representatives $\langle\nabla_2,\nabla_4,\nabla_5\rangle$ and $\langle\nabla_2,\nabla_4,\nabla_1+\nabla_5\rangle$ (by choosing $a_1=1$, $b_{n-1}=\sqrt{\alpha_1}$).
\end{itemize}

\textbf{Case A.2.} Let $\nabla_2\notin V$. Then the projection of $V$ to $\langle\nabla_1,\nabla_3,\nabla_4\rangle$ along $\langle\nabla_2\rangle$ is two-dimensional. Since the action on ${\rm H}_0/\langle\nabla_2\rangle$ is diagonal, the positions of the pivots of the reduced row echelon basis of this projection are invariant, and we have the following three cases:
\[{\rm (P1)}\ V=\langle\nabla_1+x\nabla_4+p\nabla_2,\ \nabla_3+y\nabla_4+q\nabla_2\rangle,\ {\rm (P2)}\ V=\langle\nabla_1+x\nabla_3+p\nabla_2,\ \nabla_4+q\nabla_2\rangle,\] \[{\rm (P3)}\ V=\langle\nabla_3+p\nabla_2,\ \nabla_4+q\nabla_2\rangle.\]

\emph{Case {\rm (P1)}.} By $(\ref{dagger})$ the automorphism with parameters $a_{n-1},b_{n-2},b_n$ maps the basis of $V$ to $\nabla_1+x\nabla_4+(p+b_{n-2}+b_nx)\nabla_2$ and $\nabla_3+y\nabla_4+(q+a_{n-1}+b_ny)\nabla_2$. Choosing $b_{n-2}=-p$, $a_{n-1}=-q$, $b_n=0$ we obtain $V=\langle\nabla_1+x\nabla_4,\ \nabla_3+y\nabla_4\rangle$, and this form of $V$ is preserved exactly by the automorphisms with $b_{n-2}=-b_nx$, $a_{n-1}=-b_ny$ (and arbitrary $a_1,b_{n-1}$; by $(\ref{ddagger})$ $x'=a_1^{3-n}b_{n-1}x$, $y'=a_1^2b_{n-1}^{-1}y$). Subtracting from $\theta_1$ elements of $V$ we may assume $\theta_1=\alpha_2\nabla_2+\alpha_4\nabla_4+\nabla_5$. Applying the automorphism with $b_{n-2}=-b_nx$, $a_{n-1}=-b_ny$ and subtracting $b_n(\nabla_3+y\nabla_4)$ we obtain
\begin{equation}\label{astast} \alpha_2\mapsto\alpha_2+b_n\alpha_4-b_n^2y,\quad \alpha_4\mapsto\alpha_4-2b_ny.\end{equation}
\begin{itemize}
\item Let $y\neq0$. Choosing $b_n=\frac{\alpha_4}{2y}$ in $(\ref{astast})$ we obtain $\alpha_4=0$, i.e. $W=\langle\alpha_2\nabla_2+\nabla_5,\ \nabla_1+x\nabla_4,\ \nabla_3+y\nabla_4\rangle$; this form is preserved only by $b_n=0$, and by $(\ref{ddagger})$ we have $\alpha_2'=b_{n-1}^{-1}\alpha_2$. If $\alpha_2x\neq0$, then by choosing $b_{n-1}=\alpha_2$, $a_1=\sqrt{\frac{\alpha_2}{y}}$ we have the representatives $\langle\nabla_2+\nabla_5,\ \nabla_1+\alpha\nabla_4,\ \nabla_3+\nabla_4\rangle$, $\alpha\neq0$; since the stabilizer of the normalized coefficients is $b_{n-1}=1$, $a_1=\pm1$, the numbers $\alpha$ and $(-1)^{n-3}\alpha$ give the same orbit. If $\alpha_2\neq0$, $x=0$, then in the same way we have the representative $\langle\nabla_2+\nabla_5,\ \nabla_1,\ \nabla_3+\nabla_4\rangle$. If $\alpha_2=0$, $x\neq0$, then for $n\neq5$ by choosing $a_1=\sqrt[n-5]{xy}$, $b_{n-1}=\frac{a_1^{n-3}}{x}$ we have the representative $\langle\nabla_5,\ \nabla_1+\nabla_4,\ \nabla_3+\nabla_4\rangle$, and for $n=5$ the number $xy$ is invariant and we have the representatives $\langle\nabla_5,\ \nabla_1+\nabla_4,\ \nabla_3+\alpha\nabla_4\rangle$, $\alpha\neq0$. If $\alpha_2=x=0$, then by choosing $a_1=1$, $b_{n-1}=y$ we have the representative $\langle\nabla_5,\ \nabla_1,\ \nabla_3+\nabla_4\rangle$.
\item Let $y=0$. Then $(\ref{astast})$ becomes $\alpha_2\mapsto\alpha_2+b_n\alpha_4$, and $\alpha_4$ does not change. If $\alpha_4\neq0$, then choosing $b_n=-\frac{\alpha_2}{\alpha_4}$ we obtain $\alpha_2=0$, i.e. $W=\langle\alpha_4\nabla_4+\nabla_5,\ \nabla_1+x\nabla_4,\ \nabla_3\rangle$, where $\alpha_4'=a_1b_{n-1}^{-1}\alpha_4$; we have the representatives $\langle\nabla_4+\nabla_5,\ \nabla_1,\ \nabla_3\rangle$ (by choosing $a_1=1$, $b_{n-1}=\alpha_4$) and $\langle\nabla_4+\nabla_5,\ \nabla_1+\nabla_4,\ \nabla_3\rangle$ (by choosing $a_1=\sqrt[n-4]{\alpha_4x}$, $b_{n-1}=a_1\alpha_4$). If $\alpha_4=0$, then $W=\langle\alpha_2\nabla_2+\nabla_5,\ \nabla_1+x\nabla_4,\ \nabla_3\rangle$ and we have the representatives $\langle\nabla_5,\ \nabla_1,\ \nabla_3\rangle$, $\langle\nabla_2+\nabla_5,\ \nabla_1,\ \nabla_3\rangle$, $\langle\nabla_5,\ \nabla_1+\nabla_4,\ \nabla_3\rangle$, $\langle\nabla_2+\nabla_5,\ \nabla_1+\nabla_4,\ \nabla_3\rangle$ (by choosing $b_{n-1}=\alpha_2$ if $\alpha_2\neq0$, and $a_1=\sqrt[n-3]{b_{n-1}x}$ if $x\neq0$).
\end{itemize}

\emph{Case {\rm (P2)}.} By $(\ref{dagger})$ the basis of $V$ is mapped to $\nabla_1+x\nabla_3+(p+b_{n-2}+a_{n-1}x)\nabla_2$ and $\nabla_4+(q+b_n)\nabla_2$. Choosing $b_{n-2}=-p$, $b_n=-q$, $a_{n-1}=0$ we obtain $V=\langle\nabla_1+x\nabla_3,\ \nabla_4\rangle$, where $x'=a_1^{1-n}b_{n-1}^2x$, and this form is preserved exactly by the automorphisms with $b_n=0$, $b_{n-2}=-a_{n-1}x$. Subtracting from $\theta_1$ elements of $V$ we may assume $\theta_1=\alpha_2\nabla_2+\alpha_3\nabla_3+\nabla_5$. Applying the automorphism with $b_n=0$, $b_{n-2}=-a_{n-1}x$ and subtracting $a_{n-1}\nabla_4$ we obtain $\alpha_2\mapsto\alpha_2+a_{n-1}\alpha_3$, $\alpha_3\mapsto\alpha_3$. If $\alpha_3\neq0$, then choosing $a_{n-1}=-\frac{\alpha_2}{\alpha_3}$ we obtain $\alpha_2=0$; since $\alpha_3'=a_1^{-1}\alpha_3$, we have the representatives $\langle\nabla_3+\nabla_5,\ \nabla_1,\ \nabla_4\rangle$ (by choosing $a_1=\alpha_3$, $b_{n-1}=1$) and $\langle\nabla_3+\nabla_5,\ \nabla_1+\nabla_3,\ \nabla_4\rangle$ (by choosing $a_1=\alpha_3$, $b_{n-1}=\sqrt{\frac{a_1^{n-1}}{x}}$). If $\alpha_3=0$, then $\alpha_2'=b_{n-1}^{-1}\alpha_2$ and we have the representatives $\langle\nabla_5,\ \nabla_1,\ \nabla_4\rangle$, $\langle\nabla_2+\nabla_5,\ \nabla_1,\ \nabla_4\rangle$, $\langle\nabla_5,\ \nabla_1+\nabla_3,\ \nabla_4\rangle$, $\langle\nabla_2+\nabla_5,\ \nabla_1+\nabla_3,\ \nabla_4\rangle$ (by choosing $b_{n-1}=\alpha_2$ if $\alpha_2\neq0$, and $a_1=\sqrt[n-1]{b_{n-1}^2x}$ if $x\neq0$).

\emph{Case {\rm (P3)}.} Choosing $a_{n-1}=-p$, $b_n=-q$ we obtain $V=\langle\nabla_3,\nabla_4\rangle$; this form is preserved exactly by the automorphisms with $a_{n-1}=b_n=0$. We may assume $\theta_1=\alpha_1\nabla_1+\alpha_2\nabla_2+\nabla_5$, and the parameter $b_{n-2}$ gives $\alpha_2\mapsto\alpha_2+b_{n-2}\alpha_1$. If $\alpha_1\neq0$, then by choosing $b_{n-2}=-\frac{\alpha_2}{\alpha_1}$, $a_1=1$, $b_{n-1}=\sqrt{\alpha_1}$ we have the representative $\langle\nabla_1+\nabla_5,\ \nabla_3,\ \nabla_4\rangle$. If $\alpha_1=0$, we have the representatives $\langle\nabla_5,\ \nabla_3,\ \nabla_4\rangle$ and $\langle\nabla_2+\nabla_5,\ \nabla_3,\ \nabla_4\rangle$ (by choosing $b_{n-1}=\alpha_2$).

\textbf{Case B.} Suppose that $W\subseteq{\rm H}_0$.

\textbf{Case B.1.} Let $\nabla_2\in W$. Then $W=\langle\nabla_2\rangle\oplus P$, where $P$ is the two-dimensional subspace of $\langle\nabla_1,\nabla_3,\nabla_4\rangle$ formed by the elements of $W$ with $\alpha_2=0$ (equivalently, the projection of $W$ along $\langle\nabla_2\rangle$), and $P$ is transformed diagonally: $\alpha_1'=a_1^{n-1}\alpha_1$, $\alpha_3'=b_{n-1}^2\alpha_3$, $\alpha_4'=a_1^2b_{n-1}\alpha_4$. Exactly as in Case B.2 of the two-dimensional extensions we have the representatives $\langle\nabla_2,\nabla_3,\nabla_4\rangle$, $\langle\nabla_2,\nabla_1,\nabla_4\rangle$, $\langle\nabla_2,\nabla_1,\nabla_3\rangle$, $\langle\nabla_2,\nabla_1+\nabla_3,\nabla_4\rangle$, $\langle\nabla_2,\nabla_1+\nabla_4,\nabla_3\rangle$, $\langle\nabla_2,\nabla_1,\nabla_3+\nabla_4\rangle$, and $\langle\nabla_2,\nabla_1+\nabla_4,\nabla_3+\nabla_4\rangle$ for $n\neq5$, $\langle\nabla_2,\nabla_1+\nabla_4,\nabla_3+\alpha\nabla_4\rangle$, $\alpha\neq0$, for $n=5$.

\textbf{Case B.2.} Let $\nabla_2\notin W$. Then $W=\langle\nabla_1+x\nabla_2,\ \nabla_3+y\nabla_2,\ \nabla_4+z\nabla_2\rangle$, and by $(\ref{dagger})$, choosing $b_{n-2}=-x$, $a_{n-1}=-y$, $b_n=-z$, we have the representative $\langle\nabla_1,\nabla_3,\nabla_4\rangle$.

The orbits obtained are pairwise distinct. Indeed, the subspaces ${\rm H}_0$ and $\langle\nabla_2\rangle$ are invariant; hence the property $W\subseteq{\rm H}_0$, the conditions $\nabla_2\in W\cap{\rm H}_0$, the orbit of the line $\langle\kappa\rangle$ modulo $\nabla_2$ in Case A.1 and the case {\rm (P1)}, {\rm (P2)}, {\rm (P3)} in Case A.2 are invariants. After $V$ is brought to its normal form, only the automorphisms preserving this normal form can be used; their action on the remaining coefficients of $\theta_1$ (which is determined modulo $V$) is written above, the vanishing of these coefficients is invariant, and the parameters $\alpha$ are invariant up to the indicated finite stabilizers. Thus we obtain the following $37$ orbits (the corresponding algebras are given in the Appendix):
\begingroup\small
\begin{longtable}{ll}
$\omega^{1,3}_{1}$: & $\langle \nabla_{2},\ \nabla_{5},\ \nabla_{1}\rangle^{\ast}$\\
$\omega^{1,3}_{2}$: & $\langle \nabla_{2},\ \nabla_{5},\ \nabla_{1}+\nabla_{3}\rangle^{\ast}$\\
$\omega^{1,3}_{3}$: & $\langle \nabla_{2},\ \nabla_{5},\ \nabla_{1}+\nabla_{4}\rangle^{\ast}$\\
$\omega^{1,3}_{4}$: & $\langle \nabla_{2},\ \nabla_{5},\ \nabla_{1}+\nabla_{3}+\nabla_{4}\rangle^{\ast}$ \ ($n\neq 5$); $\langle \nabla_{2},\ \nabla_{5},\ \nabla_{1}+\alpha \nabla_{3}+\nabla_{4}\rangle^{\ast}$ \ ($n=5,\ \alpha\neq0$)\\
$\omega^{1,3}_{5}$: & $\langle \nabla_{2},\ \nabla_{3},\ \nabla_{5}\rangle$\\
$\omega^{1,3}_{6}$: & $\langle \nabla_{2},\ \nabla_{3},\ \nabla_{1}+\nabla_{5}\rangle^{\ast}$\\
$\omega^{1,3}_{7}$: & $\langle \nabla_{2},\ \nabla_{3}+\nabla_{4},\ \nabla_{5}\rangle$\\
$\omega^{1,3}_{8}$: & $\langle \nabla_{2},\ \nabla_{3}+\nabla_{4},\ \nabla_{1}+\nabla_{5}\rangle^{\ast}$ \ ($n\neq 6$); $\langle \nabla_{2},\ \nabla_{3}+\nabla_{4},\ \alpha \nabla_{1}+\nabla_{5}\rangle^{\ast}$ \ ($n=6,\ \alpha\neq0$)\\
$\omega^{1,3}_{9}$: & $\langle \nabla_{2},\ \nabla_{4},\ \nabla_{5}\rangle$\\
$\omega^{1,3}_{10}$: & $\langle \nabla_{2},\ \nabla_{4},\ \nabla_{1}+\nabla_{5}\rangle^{\ast}$\\
$\omega^{1,3}_{11}$: & $\langle \nabla_{2}+\nabla_{5},\ \nabla_{1}+\alpha \nabla_{4},\ \nabla_{3}+\nabla_{4}\rangle^{\ast}$ \ ($\alpha\neq0;\ \alpha\sim-\alpha \text{ if } n \text{ is even}$)\\
$\omega^{1,3}_{12}$: & $\langle \nabla_{2}+\nabla_{5},\ \nabla_{1},\ \nabla_{3}+\nabla_{4}\rangle^{\ast}$\\
$\omega^{1,3}_{13}$: & $\langle \nabla_{5},\ \nabla_{1}+\nabla_{4},\ \nabla_{3}+\nabla_{4}\rangle^{\ast}$ \ ($n\neq 5$); $\langle \nabla_{5},\ \nabla_{1}+\nabla_{4},\ \nabla_{3}+\alpha \nabla_{4}\rangle^{\ast}$ \ ($n=5,\ \alpha\neq0$)\\
$\omega^{1,3}_{14}$: & $\langle \nabla_{5},\ \nabla_{1},\ \nabla_{3}+\nabla_{4}\rangle^{\ast}$\\
$\omega^{1,3}_{15}$: & $\langle \nabla_{4}+\nabla_{5},\ \nabla_{1},\ \nabla_{3}\rangle^{\ast}$\\
$\omega^{1,3}_{16}$: & $\langle \nabla_{4}+\nabla_{5},\ \nabla_{1}+\nabla_{4},\ \nabla_{3}\rangle^{\ast}$\\
$\omega^{1,3}_{17}$: & $\langle \nabla_{5},\ \nabla_{1},\ \nabla_{3}\rangle^{\ast}$\\
$\omega^{1,3}_{18}$: & $\langle \nabla_{2}+\nabla_{5},\ \nabla_{1},\ \nabla_{3}\rangle^{\ast}$\\
$\omega^{1,3}_{19}$: & $\langle \nabla_{5},\ \nabla_{1}+\nabla_{4},\ \nabla_{3}\rangle^{\ast}$\\
$\omega^{1,3}_{20}$: & $\langle \nabla_{2}+\nabla_{5},\ \nabla_{1}+\nabla_{4},\ \nabla_{3}\rangle^{\ast}$\\
$\omega^{1,3}_{21}$: & $\langle \nabla_{3}+\nabla_{5},\ \nabla_{1},\ \nabla_{4}\rangle^{\ast}$\\
$\omega^{1,3}_{22}$: & $\langle \nabla_{3}+\nabla_{5},\ \nabla_{1}+\nabla_{3},\ \nabla_{4}\rangle^{\ast}$\\
$\omega^{1,3}_{23}$: & $\langle \nabla_{5},\ \nabla_{1},\ \nabla_{4}\rangle^{\ast}$\\
$\omega^{1,3}_{24}$: & $\langle \nabla_{2}+\nabla_{5},\ \nabla_{1},\ \nabla_{4}\rangle^{\ast}$\\
$\omega^{1,3}_{25}$: & $\langle \nabla_{5},\ \nabla_{1}+\nabla_{3},\ \nabla_{4}\rangle^{\ast}$\\
$\omega^{1,3}_{26}$: & $\langle \nabla_{2}+\nabla_{5},\ \nabla_{1}+\nabla_{3},\ \nabla_{4}\rangle^{\ast}$\\
$\omega^{1,3}_{27}$: & $\langle \nabla_{1}+\nabla_{5},\ \nabla_{3},\ \nabla_{4}\rangle^{\ast}$\\
$\omega^{1,3}_{28}$: & $\langle \nabla_{5},\ \nabla_{3},\ \nabla_{4}\rangle$\\
$\omega^{1,3}_{29}$: & $\langle \nabla_{2}+\nabla_{5},\ \nabla_{3},\ \nabla_{4}\rangle$\\
$\omega^{1,3}_{30}$: & $\langle \nabla_{2},\ \nabla_{3},\ \nabla_{4}\rangle$\\
$\omega^{1,3}_{31}$: & $\langle \nabla_{2},\ \nabla_{1},\ \nabla_{4}\rangle^{\ast}$\\
$\omega^{1,3}_{32}$: & $\langle \nabla_{2},\ \nabla_{1},\ \nabla_{3}\rangle$\\
$\omega^{1,3}_{33}$: & $\langle \nabla_{2},\ \nabla_{1}+\nabla_{3},\ \nabla_{4}\rangle^{\ast}$\\
$\omega^{1,3}_{34}$: & $\langle \nabla_{2},\ \nabla_{1}+\nabla_{4},\ \nabla_{3}\rangle$\\
$\omega^{1,3}_{35}$: & $\langle \nabla_{2},\ \nabla_{1},\ \nabla_{3}+\nabla_{4}\rangle^{\ast}$\\
$\omega^{1,3}_{36}$: & $\langle \nabla_{2},\ \nabla_{1}+\nabla_{4},\ \nabla_{3}+\nabla_{4}\rangle^{\ast}$ \ ($n\neq 5$); $\langle \nabla_{2},\ \nabla_{1}+\nabla_{4},\ \nabla_{3}+\alpha \nabla_{4}\rangle^{\ast}$ \ ($n=5,\ \alpha\neq0$)\\
$\omega^{1,3}_{37}$: & $\langle \nabla_{1},\ \nabla_{3},\ \nabla_{4}\rangle^{\ast}$\\
\end{longtable}
\endgroup

\subsubsection{$4$-dimensional central extensions of $\mu_n^1$}
Let $W\in G_4({\rm H}^{2}(\mu_n^1,\mathbb{C}))$. If $W\subseteq{\rm H}_0$, then $W={\rm H}_0=\langle\nabla_1,\nabla_2,\nabla_3,\nabla_4\rangle$. Otherwise $V=W\cap{\rm H}_0$ is three-dimensional and $W=V\oplus\langle\theta_1\rangle$ with $\theta_1=\alpha_1\nabla_1+\alpha_2\nabla_2+\alpha_3\nabla_3+\alpha_4\nabla_4+\nabla_5$, the element $\theta_1$ being determined modulo $V$.

\textbf{Case 1.} Let $\nabla_2\in V$. Then $V=\langle\nabla_2\rangle\oplus P$, where $P$ is a two-dimensional subspace of $\langle\nabla_1,\nabla_3,\nabla_4\rangle$, and modulo $\nabla_2$ the subspace $P$ is transformed diagonally; hence the pivots of its reduced row echelon basis are invariant.
\begin{itemize}
\item Let $P=\langle\nabla_1+x\nabla_4,\ \nabla_3+y\nabla_4\rangle$. Subtracting from $\theta_1$ elements of $V$ we may assume $\theta_1=\alpha_4\nabla_4+\nabla_5$, and by $(\ref{dagger})$ the choice $a_{n-1}=-\alpha_4$ gives $\alpha_4=0$, while the elements of $V$ change only by multiples of $\nabla_2$. Thus $W=\langle\nabla_1+x\nabla_4,\nabla_2,\nabla_3+y\nabla_4,\nabla_5\rangle$, where $x'=a_1^{3-n}b_{n-1}x$, $y'=a_1^2b_{n-1}^{-1}y$, and we have the representatives $\langle\nabla_1,\nabla_2,\nabla_3,\nabla_5\rangle$, $\langle\nabla_1+\nabla_4,\nabla_2,\nabla_3,\nabla_5\rangle$ (by choosing $a_1=1$, $b_{n-1}=\frac1x$), $\langle\nabla_1,\nabla_2,\nabla_3+\nabla_4,\nabla_5\rangle$ (by choosing $a_1=1$, $b_{n-1}=y$) and, for $xy\neq0$, $\langle\nabla_1+\nabla_4,\nabla_2,\nabla_3+\nabla_4,\nabla_5\rangle$ if $n\neq5$ (by choosing $a_1=\sqrt[n-5]{xy}$, $b_{n-1}=\frac{a_1^{n-3}}{x}$), $\langle\nabla_1+\nabla_4,\nabla_2,\nabla_3+\alpha\nabla_4,\nabla_5\rangle$, $\alpha\neq0$, if $n=5$ (here $xy$ is invariant).
\item Let $P=\langle\nabla_1+x\nabla_3,\ \nabla_4\rangle$. We may assume $\theta_1=\alpha_3\nabla_3+\nabla_5$, and the choice $b_n=-\alpha_3$ gives $\alpha_3=0$. Thus $W=\langle\nabla_1+x\nabla_3,\nabla_2,\nabla_4,\nabla_5\rangle$ with $x'=a_1^{1-n}b_{n-1}^2x$, and we have the representatives $\langle\nabla_1,\nabla_2,\nabla_4,\nabla_5\rangle$ and $\langle\nabla_1+\nabla_3,\nabla_2,\nabla_4,\nabla_5\rangle$ (by choosing $a_1=1$, $b_{n-1}=\frac{1}{\sqrt{x}}$).
\item Let $P=\langle\nabla_3,\nabla_4\rangle$. We may assume $\theta_1=\alpha_1\nabla_1+\nabla_5$; the coefficient $\alpha_1$ is determined by $W$ and $\alpha_1'=a_1^{n-2}b_{n-1}^{-2}\alpha_1$. We have the representatives $\langle\nabla_2,\nabla_3,\nabla_4,\nabla_5\rangle$ and $\langle\nabla_1+\nabla_5,\nabla_2,\nabla_3,\nabla_4\rangle$ (by choosing $a_1=1$, $b_{n-1}=\sqrt{\alpha_1}$).
\end{itemize}

\textbf{Case 2.} Let $\nabla_2\notin V$. Then $V=\langle\nabla_1+p\nabla_2,\ \nabla_3+q\nabla_2,\ \nabla_4+r\nabla_2\rangle$, and by $(\ref{dagger})$, choosing $b_{n-2}=-p$, $a_{n-1}=-q$, $b_n=-r$, we obtain $V=\langle\nabla_1,\nabla_3,\nabla_4\rangle$; this form is preserved only by the automorphisms with $a_{n-1}=b_{n-2}=b_n=0$. Now we may assume $\theta_1=\alpha_2\nabla_2+\nabla_5$ with $\alpha_2'=b_{n-1}^{-1}\alpha_2$, and we have the representatives $\langle\nabla_1,\nabla_3,\nabla_4,\nabla_5\rangle$ and $\langle\nabla_1,\nabla_3,\nabla_4,\nabla_2+\nabla_5\rangle$ (by choosing $b_{n-1}=\alpha_2$).

As above, these orbits are pairwise distinct ($W\subseteq{\rm H}_0$, $\nabla_2\in W\cap{\rm H}_0$ and the orbit of $V$ are invariants, and the remaining coefficients are transformed as indicated). Thus we obtain the following $11$ orbits:
\begingroup\small
\begin{longtable}{ll}
$\omega^{1,4}_{1}$: & $\langle \nabla_{1},\ \nabla_{2},\ \nabla_{3},\ \nabla_{5}\rangle^{\ast}$\\
$\omega^{1,4}_{2}$: & $\langle \nabla_{1}+\nabla_{4},\ \nabla_{2},\ \nabla_{3},\ \nabla_{5}\rangle^{\ast}$\\
$\omega^{1,4}_{3}$: & $\langle \nabla_{1},\ \nabla_{2},\ \nabla_{3}+\nabla_{4},\ \nabla_{5}\rangle^{\ast}$\\
$\omega^{1,4}_{4}$: & $\langle \nabla_{1}+\nabla_{4},\ \nabla_{2},\ \nabla_{3}+\nabla_{4},\ \nabla_{5}\rangle^{\ast}$ \ ($n\neq 5$); $\langle \nabla_{1}+\nabla_{4},\ \nabla_{2},\ \nabla_{3}+\alpha \nabla_{4},\ \nabla_{5}\rangle^{\ast}$ \ ($n=5,\ \alpha\neq0$)\\
$\omega^{1,4}_{5}$: & $\langle \nabla_{1},\ \nabla_{2},\ \nabla_{4},\ \nabla_{5}\rangle^{\ast}$\\
$\omega^{1,4}_{6}$: & $\langle \nabla_{1}+\nabla_{3},\ \nabla_{2},\ \nabla_{4},\ \nabla_{5}\rangle^{\ast}$\\
$\omega^{1,4}_{7}$: & $\langle \nabla_{2},\ \nabla_{3},\ \nabla_{4},\ \nabla_{5}\rangle$\\
$\omega^{1,4}_{8}$: & $\langle \nabla_{1}+\nabla_{5},\ \nabla_{2},\ \nabla_{3},\ \nabla_{4}\rangle^{\ast}$\\
$\omega^{1,4}_{9}$: & $\langle \nabla_{1},\ \nabla_{3},\ \nabla_{4},\ \nabla_{5}\rangle^{\ast}$\\
$\omega^{1,4}_{10}$: & $\langle \nabla_{1},\ \nabla_{3},\ \nabla_{4},\ \nabla_{2}+\nabla_{5}\rangle^{\ast}$\\
$\omega^{1,4}_{11}$: & $\langle \nabla_{1},\ \nabla_{2},\ \nabla_{3},\ \nabla_{4}\rangle^{\ast}$\\
\end{longtable}
\endgroup

\subsubsection{$5$-dimensional central extensions of $\mu_n^1$}
Since $\dim {\rm H}^2(\mu_n^1,\mathbb{C})=5$, there is only one five-dimensional subspace, $W={\rm H}^2(\mu_n^1,\mathbb{C})$, and it lies in $T_5(\mu_n^1)$. We obtain the unique algebra $\omega^{1,5}_{1}$ (see the Appendix).

\subsection{Central extensions of $\mu_n^2$} Let us use the following notations:
\[\nabla_1=[\Delta_{n-2,1}+\Delta_{n-2,n-1}],\ \nabla_2=[\Delta_{n-1,1}], \ \nabla_3=[\Delta_{n-1,n-1}], \ \nabla_4=[-\Delta_{2,1}+\Delta_{n,1}], \ \nabla_5= [\Delta_{n,n-1}].\]
Take $\theta=\alpha_1\nabla_1+\alpha_2\nabla_2+\alpha_3\nabla_3+\alpha_4\nabla_4+\alpha_5\nabla_5\in {\rm H}^{2}(\mu_n^2,\mathbb{C}).$
Computing $\phi_2^n\theta(x,y)=\theta(\phi_2^n(x),\phi_2^n(y))$ and reducing modulo ${\rm B}^{2}(\mu_n^2,\mathbb{C})$
we have that the action of ${\rm Aut} (\mu_n^2)$ on the subspace
$\langle \alpha_1\nabla_1+\alpha_2\nabla_2+\alpha_3\nabla_3+\alpha_4\nabla_4+\alpha_5\nabla_5  \rangle$
is given by
$\langle \alpha_1^\prime\nabla_1+\alpha_2^\prime\nabla_2+\alpha_3^\prime\nabla_3+\alpha_4^\prime\nabla_4+\alpha_5^\prime\nabla_5\rangle,$
where
\[\begin{array}{l}
\alpha_1^\prime=a_1(a_1+a_{n-1})^{n-2}\alpha_1, \\
\alpha_2^\prime=(a_1+a_{n-1})(b_{n-2}\alpha_1 +a_1\alpha_2+a_{n-1}\alpha_3)+a_1b_n\alpha_4+a_{n-1}b_n\alpha_5, \\
\alpha_3^\prime=(a_1+a_{n-1})(b_{n-2}\alpha_1+(a_1+a_{n-1})\alpha_3+b_n\alpha_5), \\
\alpha_4^\prime=a_1^2(a_1\alpha_4+a_{n-1}\alpha_5),\\
\alpha_5^\prime=a_1^2(a_1+a_{n-1})\alpha_5.
\end{array}\]

Let $x=\lambda e_{n-2}+\mu e_n\in Ann(\mu_n^2)$. Then $\theta(x,e_1)=\lambda\alpha_1+\mu\alpha_4$, $\theta(x,e_{n-1})=\lambda\alpha_1+\mu\alpha_5$, and all the other values $\theta(x,e_j)$, $\theta(e_j,x)$ vanish. Hence $W\in T_s(\mu_n^2)$ if and only if the system $\lambda\alpha_1+\mu\alpha_4=0$, $\lambda\alpha_1+\mu\alpha_5=0$, $\theta\in W$, has only the trivial solution.

\subsubsection{$1$-dimensional central extensions of $\mu_n^2$}
The one-dimensional central extensions of $\mu_n^2$ were classified in \cite{Adashev2023}. In the notation of the present paper the result is as follows: there are $15$ orbits:
\begingroup\small
\begin{longtable}{ll}
$\omega^{2,1}_{1}$: & $\langle \nabla_{2}\rangle$\\
$\omega^{2,1}_{2}$: & $\langle \nabla_{3}\rangle$\\
$\omega^{2,1}_{3}$: & $\langle \nabla_{2}+\nabla_{3}\rangle$\\
$\omega^{2,1}_{4}$: & $\langle \nabla_{1}\rangle$\\
$\omega^{2,1}_{5}$: & $\langle \nabla_{1}+\nabla_{2}\rangle$\\
$\omega^{2,1}_{6}$: & $\langle \nabla_{4}\rangle$\\
$\omega^{2,1}_{7}$: & $\langle \nabla_{3}+\nabla_{4}\rangle$\\
$\omega^{2,1}_{8}$: & $\langle \nabla_{1}+\nabla_{4}\rangle^{\ast}$\\
$\omega^{2,1}_{9}$: & $\langle \nabla_{4}+\nabla_{5}\rangle$\\
$\omega^{2,1}_{10}$: & $\langle \nabla_{2}+\nabla_{4}+\nabla_{5}\rangle$\\
$\omega^{2,1}_{11}$: & $\langle \nabla_{5}\rangle$\\
$\omega^{2,1}_{12}$: & $\langle \nabla_{2}+\nabla_{5}\rangle$\\
$\omega^{2,1}_{13}$: & $\langle \nabla_{1}+\nabla_{4}+\nabla_{5}\rangle$\\
$\omega^{2,1}_{14}$: & $\langle \nabla_{1}+\nabla_{2}+\nabla_{4}+\nabla_{5}\rangle$\\
$\omega^{2,1}_{15}$: & $\langle \nabla_{1}+\nabla_{5}\rangle^{\ast}$\\
\end{longtable}
\endgroup

By the criterion above, exactly the orbits $\langle \nabla_1+\nabla_4\rangle$ and $\langle\nabla_1+\nabla_5\rangle$ lie in $T_1(\mu_n^2)$ (here $\alpha_1(\alpha_5-\alpha_4)\neq0$).

\subsubsection{An adapted basis of ${\rm H}^2(\mu_n^2,\mathbb{C})$} For the extensions of higher dimension it is convenient to use the basis
\[\Omega_1=\nabla_1,\quad \Omega_2=\nabla_2,\quad \Omega_3=\nabla_2+\nabla_3,\quad \Omega_4=\nabla_4,\quad \Omega_5=\nabla_4+\nabla_5.\]
If $\theta=\sum_{i=1}^5\beta_i\Omega_i$, i.e. $\alpha_1=\beta_1$, $\alpha_2=\beta_2+\beta_3$, $\alpha_3=\beta_3$, $\alpha_4=\beta_4+\beta_5$, $\alpha_5=\beta_5$, then, putting $b_{n-1}=a_1+a_{n-1}$ (an arbitrary non-zero number), the formulas above take the form
\[\beta_1'=a_1b_{n-1}^{n-2}\beta_1,\  \beta_2'=a_1(b_{n-1}\beta_2+b_n\beta_4),\  \beta_3'=b_{n-1}(b_{n-2}\beta_1+b_{n-1}\beta_3+b_n\beta_5),\  \beta_4'=a_1^3\beta_4,\ \beta_5'=a_1^2b_{n-1}\beta_5.\]
For $a_1=b_{n-1}=1$ these formulas take the form
\begin{equation}\label{dagger'} \beta_2\mapsto\beta_2+b_n\beta_4,\quad \beta_3\mapsto\beta_3+b_{n-2}\beta_1+b_n\beta_5\end{equation}
($\beta_1,\beta_4,\beta_5$ being unchanged), and for $b_{n-2}=b_n=0$ they are diagonal:
\begin{equation}\label{ddagger'} \beta_1'=a_1b_{n-1}^{n-2}\beta_1,\quad \beta_2'=a_1b_{n-1}\beta_2,\quad \beta_3'=b_{n-1}^2\beta_3,\quad \beta_4'=a_1^3\beta_4,\quad \beta_5'=a_1^2b_{n-1}\beta_5;\end{equation}
every automorphism is a composition of automorphisms of these two types.
In terms of the coordinates $\beta_i$ the criterion for $T_s(\mu_n^2)$ reads: the system $\mu\beta_4=0$, $\lambda\beta_1+\mu\beta_5=0$, $\theta\in W$, has only the trivial solution. In particular, $T_s$ depends only on the projection of $W$ to $\langle\Omega_1,\Omega_4,\Omega_5\rangle$ along $\langle\Omega_2,\Omega_3\rangle$. Note also that $\langle\Omega_2,\Omega_3\rangle$ is an invariant subspace, and that $\beta_1,\beta_4,\beta_5$ are transformed diagonally.

\subsubsection{$2$-dimensional central extensions of $\mu_n^2$}
Let $W\in G_2({\rm H}^{2}(\mu_n^2,\mathbb{C}))$ and let $\pi(W)$ be the projection of $W$ to $\langle\Omega_1,\Omega_4,\Omega_5\rangle$ along $\langle\Omega_2,\Omega_3\rangle$. The number $r=\dim\pi(W)$ is invariant. Below, when we use the parameters $b_{n-2}$, $b_n$ for removing coefficients, we write the corresponding formulas for $a_1=b_{n-1}=1$; the torus $(a_1,b_{n-1})$ is used afterwards for the normalization.

\textbf{Case $r=0$.} Then $W=\langle\Omega_2,\Omega_3\rangle$.

\textbf{Case $r=1$.} Then $W=\langle\kappa,\theta\rangle$ with $0\neq\kappa=\kappa_2\Omega_2+\kappa_3\Omega_3$ and $\theta=\sum\beta_i\Omega_i$, $(\beta_1,\beta_4,\beta_5)\neq0$. We have $\kappa_2'=a_1b_{n-1}\kappa_2$, $\kappa_3'=b_{n-1}^2\kappa_3$.

\emph{Case $r=1$, $\kappa_2=0$.} Then $\kappa=\Omega_3$ and we may assume $\beta_3=0$. If $\beta_4\neq0$, then choosing $b_n=-\frac{b_{n-1}\beta_2}{\beta_4}$ we obtain $\beta_2=0$, and the diagonal action on $(\beta_1,\beta_4,\beta_5)$ gives the representatives $\langle\Omega_3,\Omega_4\rangle$, $\langle\Omega_3,\Omega_1+\Omega_4\rangle$, $\langle\Omega_3,\Omega_4+\Omega_5\rangle$, $\langle\Omega_3,\Omega_1+\Omega_4+\Omega_5\rangle$ (in the last case $a_1,b_{n-1}$ are determined by $\frac{\beta_1'}{\beta_4'}=\frac{b_{n-1}^{n-2}\beta_1}{a_1^2\beta_4}=1$, $\frac{\beta_5'}{\beta_4'}=\frac{b_{n-1}\beta_5}{a_1\beta_4}=1$, which is possible since $n\neq4$). If $\beta_4=0$, then $\beta_2$ cannot be removed, and we have the representatives $\langle\Omega_3,\Omega_1\rangle$, $\langle\Omega_3,\Omega_1+\Omega_2\rangle$, $\langle\Omega_3,\Omega_5\rangle$, $\langle\Omega_3,\Omega_2+\Omega_5\rangle$, $\langle\Omega_3,\Omega_1+\Omega_5\rangle$, $\langle\Omega_3,\Omega_1+\Omega_2+\Omega_5\rangle$.

\emph{Case $r=1$, $\kappa_2\neq0$.} Then $\kappa=\Omega_2+k\Omega_3$ with $k'=\frac{b_{n-1}}{a_1}k$, and we may assume $\beta_2=0$. Applying the automorphism with parameter $b_n$ and subtracting the multiple $b_n\beta_4\kappa$ we obtain $\beta_3\mapsto\beta_3+b_n(\beta_5-k\beta_4)$; the parameter $b_{n-2}$ gives $\beta_3\mapsto\beta_3+b_{n-2}\beta_1$.
\begin{itemize}
\item Let $k=0$. If $\beta_1=\beta_5=0$, we have the representatives $\langle\Omega_2,\Omega_4\rangle$, $\langle\Omega_2,\Omega_3+\Omega_4\rangle$. Otherwise we obtain $\beta_3=0$ and the representatives $\langle\Omega_2,\Omega_1\rangle$, $\langle\Omega_2,\Omega_5\rangle$, $\langle\Omega_2,\Omega_1+\Omega_4\rangle$, $\langle\Omega_2,\Omega_1+\Omega_5\rangle$, $\langle\Omega_2,\Omega_4+\Omega_5\rangle$, $\langle\Omega_2,\Omega_1+\Omega_4+\Omega_5\rangle$.
\item Let $k\neq0$, $\beta_1\neq0$. Choosing $b_{n-2}$ we obtain $\beta_3=0$, and normalizing $k$ and the non-zero coefficients among $\beta_4,\beta_5$ (the characters of $k$, $\frac{\beta_4}{\beta_1}$, $\frac{\beta_5}{\beta_1}$ are $a_1^{-1}b_{n-1}$, $a_1^{2}b_{n-1}^{2-n}$, $a_1b_{n-1}^{3-n}$) we have the representatives $\langle\Omega_2+\Omega_3,\Omega_1\rangle$, $\langle\Omega_2+\Omega_3,\Omega_1+\Omega_4\rangle$, $\langle\Omega_2+\Omega_3,\Omega_1+\Omega_5\rangle$ and $\langle\Omega_2+\alpha\Omega_3,\Omega_1+\Omega_4+\Omega_5\rangle$, $\alpha\neq0$ (in the last case the stabilizer of the normalized coefficients satisfies $b_{n-1}=a_1$, so $\alpha$ is an invariant).
\item Let $k\neq0$, $\beta_1=0$, $\beta_5\neq k\beta_4$. Choosing $b_n$ we obtain $\beta_3=0$, and we have the representatives $\langle\Omega_2+\Omega_3,\Omega_4\rangle$, $\langle\Omega_2+\Omega_3,\Omega_5\rangle$ and, if $\beta_4\beta_5\neq0$, $\langle\Omega_2+\alpha\Omega_3,\Omega_4+\Omega_5\rangle$ with $\alpha=\frac{k\beta_4}{\beta_5}\neq0,1$ (this number is invariant since $k$ and $\frac{\beta_5}{\beta_4}$ have the same character $a_1^{-1}b_{n-1}$).
\item Let $k\neq0$, $\beta_1=0$, $\beta_5=k\beta_4$. Then $\beta_3$ cannot be removed; normalizing $k=1$ (so that $b_{n-1}=a_1$ afterwards) and $\frac{\beta_3'}{\beta_4'}=\frac{b_{n-1}^2\beta_3}{a_1^3\beta_4}$ we have the representatives $\langle\Omega_2+\Omega_3,\Omega_4+\Omega_5\rangle$ and $\langle\Omega_2+\Omega_3,\Omega_3+\Omega_4+\Omega_5\rangle$.
\end{itemize}
The representatives $\langle\Omega_2+\alpha\Omega_3,\Omega_4+\Omega_5\rangle$ (respectively $\langle\Omega_2+\alpha\Omega_3,\Omega_1+\Omega_4+\Omega_5\rangle$) obtained for $\alpha=0$, $\alpha=1$ and $\alpha\neq0,1$ (respectively for $\alpha=0$ and $\alpha\neq0$) are united into one family with $\alpha\in\mathbb{C}$.

\textbf{Case $r=2$.} The two-dimensional subspaces $P=\pi(W)$ of $\langle\Omega_1,\Omega_4,\Omega_5\rangle$ are, up to the diagonal action, $\langle\Omega_4,\Omega_5\rangle$, $\langle\Omega_1,\Omega_5\rangle$, $\langle\Omega_1,\Omega_4\rangle$, $\langle\Omega_1+p\Omega_4,\Omega_5\rangle$, $\langle\Omega_1+q\Omega_5,\Omega_4\rangle$, $\langle\Omega_1,\Omega_4+m\Omega_5\rangle$ ($p,q,m\neq0$) and $\langle\Omega_1+\Omega_5,\Omega_4+\Omega_5\rangle$ (the generic case; here the two ratios of the coefficients have the characters $a_1b_{n-1}^{3-n}$ and $a_1^{-1}b_{n-1}$, which are independent since $n\neq4$). Accordingly, $W=\langle\theta,\theta'\rangle$, where $\theta,\theta'$ project to the indicated basis of $P$, and we write $\theta=\dots+x_2\Omega_2+x_3\Omega_3$, $\theta'=\dots+y_2\Omega_2+y_3\Omega_3$.

\emph{$P=\langle\Omega_4,\Omega_5\rangle$.} The parameter $b_n$ gives $x_2\mapsto x_2+b_n$, $y_3\mapsto y_3+b_n$, so we may assume $x_2=0$. Then $x_3$, $y_2$, $y_3$ are transformed with the characters $a_1^{-3}b_{n-1}^2$, $a_1^{-1}$, $a_1^{-2}b_{n-1}$; any two of these characters are independent, so we have the representatives $\langle\Omega_4,\Omega_5\rangle$, $\langle\Omega_3+\Omega_4,\Omega_5\rangle$, $\langle\Omega_4,\Omega_2+\Omega_5\rangle$, $\langle\Omega_4,\Omega_3+\Omega_5\rangle$, $\langle\Omega_3+\Omega_4,\Omega_2+\Omega_5\rangle$, $\langle\Omega_4,\Omega_2+\Omega_3+\Omega_5\rangle$, and $\langle\Omega_3+\Omega_4,\alpha\Omega_2+\Omega_3+\Omega_5\rangle$, $\alpha\in\mathbb{C}$ (for $x_3y_3\neq0$ we normalize $x_3=y_3=1$, which determines $a_1,b_{n-1}$ uniquely).

\emph{$P=\langle\Omega_1,\Omega_5\rangle$.} Choosing $b_{n-2}$ and $b_n$ we obtain $x_3=y_3=0$; $x_2$ and $y_2$ have the independent characters $b_{n-1}^{3-n}$, $a_1^{-1}$, and we have the representatives $\langle\Omega_1,\Omega_5\rangle$, $\langle\Omega_1+\Omega_2,\Omega_5\rangle$, $\langle\Omega_1,\Omega_2+\Omega_5\rangle$, $\langle\Omega_1+\Omega_2,\Omega_2+\Omega_5\rangle$.

\emph{$P=\langle\Omega_1,\Omega_4\rangle$.} Choosing $b_{n-2}$ and $b_n$ we obtain $x_3=y_2=0$; $x_2$ and $y_3$ have the independent characters $b_{n-1}^{3-n}$, $a_1^{-3}b_{n-1}^2$, and we have the representatives $\langle\Omega_1,\Omega_4\rangle$, $\langle\Omega_1+\Omega_2,\Omega_4\rangle$, $\langle\Omega_1,\Omega_3+\Omega_4\rangle$, $\langle\Omega_1+\Omega_2,\Omega_3+\Omega_4\rangle$.

\emph{$P=\langle\Omega_1+p\Omega_4,\Omega_5\rangle$.} The parameter $b_{n-2}$ gives $x_3\mapsto x_3+b_{n-2}$ and $b_n$ gives $x_2\mapsto x_2+pb_n$, $y_3\mapsto y_3+b_n$. So we may assume $x_2=x_3=0$. The numbers $p$, $y_2$, $y_3$ have the characters $a_1^2b_{n-1}^{2-n}$, $a_1^{-1}$, $a_1^{-2}b_{n-1}$, and we have the representatives $\langle\Omega_1+\Omega_4,\Omega_5\rangle$, $\langle\Omega_1+\Omega_4,\Omega_2+\Omega_5\rangle$, $\langle\Omega_1+\Omega_4,\Omega_3+\Omega_5\rangle$ (here we use $n\neq3$) and $\langle\Omega_1+\alpha\Omega_4,\Omega_2+\Omega_3+\Omega_5\rangle$, $\alpha\neq0$ (normalizing $y_2=y_3=1$ determines $a_1,b_{n-1}$ uniquely).

\emph{$P=\langle\Omega_1+q\Omega_5,\Omega_4\rangle$.} The parameter $b_n$ gives $x_3\mapsto x_3+qb_n$, $y_2\mapsto y_2+b_n$, and $b_{n-2}$ gives $x_3\mapsto x_3+b_{n-2}$. So we may assume $y_2=x_3=0$. The numbers $q$, $x_2$, $y_3$ have the characters $a_1b_{n-1}^{3-n}$, $b_{n-1}^{3-n}$, $a_1^{-3}b_{n-1}^2$, and we have the representatives $\langle\Omega_1+\Omega_5,\Omega_4\rangle$, $\langle\Omega_1+\Omega_2+\Omega_5,\Omega_4\rangle$, $\langle\Omega_1+\Omega_5,\Omega_3+\Omega_4\rangle$ (here we use $3n\neq11$) and $\langle\Omega_1+\Omega_2+\Omega_5,\alpha\Omega_3+\Omega_4\rangle$, $\alpha\neq0$. In the last case the stabilizer of the normalized coefficients is $a_1=1$, $b_{n-1}^{n-3}=1$, so $\alpha$ and $\zeta^2\alpha$ ($\zeta^{n-3}=1$) give the same orbit.

\emph{$P=\langle\Omega_1,\Omega_4+m\Omega_5\rangle$.} Choosing $b_{n-2}$ and $b_n$ we obtain $x_3=y_2=0$. Normalizing $m=1$ we get $b_{n-1}=a_1$, and then $x_2$, $y_3$ have the characters $a_1^{3-n}$, $a_1^{-1}$. We have the representatives $\langle\Omega_1,\Omega_4+\Omega_5\rangle$, $\langle\Omega_1+\Omega_2,\Omega_4+\Omega_5\rangle$, $\langle\Omega_1+\alpha\Omega_2,\Omega_3+\Omega_4+\Omega_5\rangle$, $\alpha\in\mathbb{C}$.

\emph{$P=\langle\Omega_1+\Omega_5,\Omega_4+\Omega_5\rangle$.} The parameter $b_n$ gives $x_3\mapsto x_3+b_n$, $y_2\mapsto y_2+b_n$, $y_3\mapsto y_3+b_n$, and $b_{n-2}$ gives $x_3\mapsto x_3+b_{n-2}$. So we may assume $y_2=x_3=0$. The stabilizer of $P$ in the torus is $a_1=b_{n-1}=\zeta$, $\zeta^{n-4}=1$, and it multiplies $x_2$ and $y_3$ by $\zeta^{-1}$. We have the representatives $\langle\Omega_1+\alpha\Omega_2+\Omega_5,\beta\Omega_3+\Omega_4+\Omega_5\rangle$, $\alpha,\beta\in\mathbb{C}$, where $(\alpha,\beta)$ and $(\zeta\alpha,\zeta\beta)$, $\zeta^{n-4}=1$, give the same orbit.

The orbits obtained are pairwise distinct: $r$, the orbit of $\pi(W)$ and, for $r=1$, the orbit of the line $W\cap\langle\Omega_2,\Omega_3\rangle$ are invariants; inside each case the vanishing of the remaining coefficients of the normal forms is invariant under the stabilizer of the normal form, and the identifications of the parameters are given by the finite stabilizers indicated above. Thus we obtain the following $52$ items ($59$ series of orbits if the families are counted according to the different values of the parameter considered in the proof):
\begingroup\small
\begin{longtable}{ll}
$\omega^{2,2}_{1}$: & $\langle \Omega_{2},\ \Omega_{3}\rangle$\\
$\omega^{2,2}_{2}$: & $\langle \Omega_{3},\ \Omega_{4}\rangle$\\
$\omega^{2,2}_{3}$: & $\langle \Omega_{3},\ \Omega_{1}+\Omega_{4}\rangle^{\ast}$\\
$\omega^{2,2}_{4}$: & $\langle \Omega_{3},\ \Omega_{4}+\Omega_{5}\rangle$\\
$\omega^{2,2}_{5}$: & $\langle \Omega_{3},\ \Omega_{1}+\Omega_{4}+\Omega_{5}\rangle^{\ast}$\\
$\omega^{2,2}_{6}$: & $\langle \Omega_{3},\ \Omega_{1}\rangle$\\
$\omega^{2,2}_{7}$: & $\langle \Omega_{3},\ \Omega_{1}+\Omega_{2}\rangle$\\
$\omega^{2,2}_{8}$: & $\langle \Omega_{3},\ \Omega_{5}\rangle$\\
$\omega^{2,2}_{9}$: & $\langle \Omega_{3},\ \Omega_{2}+\Omega_{5}\rangle$\\
$\omega^{2,2}_{10}$: & $\langle \Omega_{3},\ \Omega_{1}+\Omega_{5}\rangle$\\
$\omega^{2,2}_{11}$: & $\langle \Omega_{3},\ \Omega_{1}+\Omega_{2}+\Omega_{5}\rangle$\\
$\omega^{2,2}_{12}$: & $\langle \Omega_{2},\ \Omega_{4}\rangle$\\
$\omega^{2,2}_{13}$: & $\langle \Omega_{2},\ \Omega_{3}+\Omega_{4}\rangle$\\
$\omega^{2,2}_{14}$: & $\langle \Omega_{2},\ \Omega_{1}\rangle$\\
$\omega^{2,2}_{15}$: & $\langle \Omega_{2},\ \Omega_{5}\rangle$\\
$\omega^{2,2}_{16}$: & $\langle \Omega_{2},\ \Omega_{1}+\Omega_{4}\rangle^{\ast}$\\
$\omega^{2,2}_{17}$: & $\langle \Omega_{2},\ \Omega_{1}+\Omega_{5}\rangle$\\
$\omega^{2,2}_{18}$: & $\langle \Omega_{2}+\Omega_{3},\ \Omega_{4}\rangle$\\
$\omega^{2,2}_{19}$: & $\langle \Omega_{2}+\Omega_{3},\ \Omega_{5}\rangle$\\
$\omega^{2,2}_{20}$: & $\langle \Omega_{2}+\alpha \Omega_{3},\ \Omega_{4}+\Omega_{5}\rangle$ \ ($\alpha\in\mathbb{C}$)\\
$\omega^{2,2}_{21}$: & $\langle \Omega_{2}+\Omega_{3},\ \Omega_{3}+\Omega_{4}+\Omega_{5}\rangle$\\
$\omega^{2,2}_{22}$: & $\langle \Omega_{2}+\Omega_{3},\ \Omega_{1}\rangle$\\
$\omega^{2,2}_{23}$: & $\langle \Omega_{2}+\Omega_{3},\ \Omega_{1}+\Omega_{4}\rangle^{\ast}$\\
$\omega^{2,2}_{24}$: & $\langle \Omega_{2}+\Omega_{3},\ \Omega_{1}+\Omega_{5}\rangle$\\
$\omega^{2,2}_{25}$: & $\langle \Omega_{2}+\alpha \Omega_{3},\ \Omega_{1}+\Omega_{4}+\Omega_{5}\rangle^{\ast}$ \ ($\alpha\in\mathbb{C}$)\\
$\omega^{2,2}_{26}$: & $\langle \Omega_{4},\ \Omega_{5}\rangle$\\
$\omega^{2,2}_{27}$: & $\langle \Omega_{3}+\Omega_{4},\ \Omega_{5}\rangle$\\
$\omega^{2,2}_{28}$: & $\langle \Omega_{4},\ \Omega_{2}+\Omega_{5}\rangle$\\
$\omega^{2,2}_{29}$: & $\langle \Omega_{4},\ \Omega_{3}+\Omega_{5}\rangle$\\
$\omega^{2,2}_{30}$: & $\langle \Omega_{3}+\Omega_{4},\ \Omega_{2}+\Omega_{5}\rangle$\\
$\omega^{2,2}_{31}$: & $\langle \Omega_{4},\ \Omega_{2}+\Omega_{3}+\Omega_{5}\rangle$\\
$\omega^{2,2}_{32}$: & $\langle \Omega_{3}+\Omega_{4},\ \alpha \Omega_{2}+\Omega_{3}+\Omega_{5}\rangle$ \ ($\alpha\in\mathbb{C}$)\\
$\omega^{2,2}_{33}$: & $\langle \Omega_{1},\ \Omega_{5}\rangle^{\ast}$\\
$\omega^{2,2}_{34}$: & $\langle \Omega_{1}+\Omega_{2},\ \Omega_{5}\rangle^{\ast}$\\
$\omega^{2,2}_{35}$: & $\langle \Omega_{1},\ \Omega_{2}+\Omega_{5}\rangle^{\ast}$\\
$\omega^{2,2}_{36}$: & $\langle \Omega_{1}+\Omega_{2},\ \Omega_{2}+\Omega_{5}\rangle^{\ast}$\\
$\omega^{2,2}_{37}$: & $\langle \Omega_{1},\ \Omega_{4}\rangle^{\ast}$\\
$\omega^{2,2}_{38}$: & $\langle \Omega_{1}+\Omega_{2},\ \Omega_{4}\rangle^{\ast}$\\
$\omega^{2,2}_{39}$: & $\langle \Omega_{1},\ \Omega_{3}+\Omega_{4}\rangle^{\ast}$\\
$\omega^{2,2}_{40}$: & $\langle \Omega_{1}+\Omega_{2},\ \Omega_{3}+\Omega_{4}\rangle^{\ast}$\\
$\omega^{2,2}_{41}$: & $\langle \Omega_{1}+\Omega_{4},\ \Omega_{5}\rangle^{\ast}$\\
$\omega^{2,2}_{42}$: & $\langle \Omega_{1}+\Omega_{4},\ \Omega_{2}+\Omega_{5}\rangle^{\ast}$\\
$\omega^{2,2}_{43}$: & $\langle \Omega_{1}+\Omega_{4},\ \Omega_{3}+\Omega_{5}\rangle^{\ast}$\\
$\omega^{2,2}_{44}$: & $\langle \Omega_{1}+\alpha \Omega_{4},\ \Omega_{2}+\Omega_{3}+\Omega_{5}\rangle^{\ast}$ \ ($\alpha\neq0$)\\
$\omega^{2,2}_{45}$: & $\langle \Omega_{1}+\Omega_{5},\ \Omega_{4}\rangle^{\ast}$\\
$\omega^{2,2}_{46}$: & $\langle \Omega_{1}+\Omega_{2}+\Omega_{5},\ \Omega_{4}\rangle^{\ast}$\\
$\omega^{2,2}_{47}$: & $\langle \Omega_{1}+\Omega_{5},\ \Omega_{3}+\Omega_{4}\rangle^{\ast}$\\
$\omega^{2,2}_{48}$: & $\langle \Omega_{1}+\Omega_{2}+\Omega_{5},\ \alpha \Omega_{3}+\Omega_{4}\rangle^{\ast}$ \ ($\alpha\neq0;\ \alpha\sim\zeta^2\alpha,\ \zeta^{n-3}=1$)\\
$\omega^{2,2}_{49}$: & $\langle \Omega_{1},\ \Omega_{4}+\Omega_{5}\rangle^{\ast}$\\
$\omega^{2,2}_{50}$: & $\langle \Omega_{1}+\Omega_{2},\ \Omega_{4}+\Omega_{5}\rangle^{\ast}$\\
$\omega^{2,2}_{51}$: & $\langle \Omega_{1}+\alpha \Omega_{2},\ \Omega_{3}+\Omega_{4}+\Omega_{5}\rangle^{\ast}$ \ ($\alpha\in\mathbb{C}$)\\
$\omega^{2,2}_{52}$: & $\langle \Omega_{1}+\alpha \Omega_{2}+\Omega_{5},\ \beta \Omega_{3}+\Omega_{4}+\Omega_{5}\rangle^{\ast}$ \ ($\alpha\in\mathbb{C},\ \beta\in\mathbb{C};\ (\alpha,\beta)\sim(\zeta\alpha,\zeta\beta),\ \zeta^{n-4}=1$)\\
\end{longtable}
\endgroup

\subsubsection{$3$-dimensional central extensions of $\mu_n^2$}
Let $W\in G_3({\rm H}^{2}(\mu_n^2,\mathbb{C}))$, let $\pi(W)$ be the projection of $W$ to $\langle\Omega_1,\Omega_4,\Omega_5\rangle$ along $\langle\Omega_2,\Omega_3\rangle$ and let $K=W\cap\langle\Omega_2,\Omega_3\rangle$. Since $\langle\Omega_2,\Omega_3\rangle$ is invariant, the number $r=\dim\pi(W)=3-\dim K$, the orbit of $K$ and the orbit of $\pi(W)$ (with respect to the diagonal action of $\beta_1,\beta_4,\beta_5$) are invariants. Note that $r\geq1$. As before, the parameters $b_{n-2},b_n$ are used by means of $(\ref{dagger'})$, and then $a_1,b_{n-1}$ are used by means of $(\ref{ddagger'})$.

\textbf{Case $r=1$.} Then $K=\langle\Omega_2,\Omega_3\rangle$ and $W=\langle\Omega_2,\Omega_3,\theta\rangle$, where $0\neq\theta=\beta_1\Omega_1+\beta_4\Omega_4+\beta_5\Omega_5$ is transformed diagonally. We have the representatives $\langle\Omega_2,\Omega_3,\Omega_1\rangle$, $\langle\Omega_2,\Omega_3,\Omega_4\rangle$, $\langle\Omega_2,\Omega_3,\Omega_5\rangle$, $\langle\Omega_2,\Omega_3,\Omega_1+\Omega_4\rangle$, $\langle\Omega_2,\Omega_3,\Omega_1+\Omega_5\rangle$, $\langle\Omega_2,\Omega_3,\Omega_4+\Omega_5\rangle$ and $\langle\Omega_2,\Omega_3,\Omega_1+\Omega_4+\Omega_5\rangle$ (in the last case $\frac{\beta_4}{\beta_1}$ and $\frac{\beta_5}{\beta_1}$ have the characters $a_1^2b_{n-1}^{2-n}$ and $a_1b_{n-1}^{3-n}$, which are independent since $n\neq4$).

\textbf{Case $r=3$.} Then $K=0$ and $W=\langle\Omega_1+x_2\Omega_2+x_3\Omega_3,\ \Omega_4+y_2\Omega_2+y_3\Omega_3,\ \Omega_5+z_2\Omega_2+z_3\Omega_3\rangle$. By $(\ref{dagger'})$ we have $x_3\mapsto x_3+b_{n-2}$, $y_2\mapsto y_2+b_n$, $z_3\mapsto z_3+b_n$, the other coefficients being unchanged. Choosing $b_{n-2}=-x_3$, $b_n=-y_2$ we obtain $x_3=y_2=0$; this form is preserved only by $b_{n-2}=b_n=0$. By $(\ref{ddagger'})$ the remaining coefficients are transformed by
\[x_2'=b_{n-1}^{3-n}x_2,\qquad y_3'=a_1^{-3}b_{n-1}^2y_3,\qquad z_2'=a_1^{-1}z_2,\qquad z_3'=a_1^{-2}b_{n-1}z_3,\]
and any two of these characters are independent (here we use $n\neq3$). Hence, if at most two of the coefficients $x_2,y_3,z_2,z_3$ are non-zero, they can be normalized to $1$, and if $y_3\neq0$, $z_3\neq0$ (respectively $z_2\neq0$, $z_3\neq0$; $z_2\neq0$, $y_3\neq0$, $z_3=0$) we normalize $y_3=z_3=1$, which gives $a_1=b_{n-1}=1$ (respectively $z_2=z_3=1$, which gives $a_1=b_{n-1}=1$; $z_2=y_3=1$, which gives $a_1=1$, $b_{n-1}=\pm1$). We obtain the representatives $\langle\Omega_1,\Omega_4,\Omega_5\rangle$, $\langle\Omega_1+\Omega_2,\Omega_4,\Omega_5\rangle$, $\langle\Omega_1,\Omega_4,\Omega_2+\Omega_5\rangle$, $\langle\Omega_1,\Omega_3+\Omega_4,\Omega_5\rangle$, $\langle\Omega_1,\Omega_4,\Omega_3+\Omega_5\rangle$, $\langle\Omega_1+\Omega_2,\Omega_4,\Omega_2+\Omega_5\rangle$, $\langle\Omega_1+\Omega_2,\Omega_3+\Omega_4,\Omega_5\rangle$, $\langle\Omega_1+\Omega_2,\Omega_4,\Omega_3+\Omega_5\rangle$, and the families $\langle\Omega_1+\alpha\Omega_2,\Omega_3+\Omega_4,\Omega_2+\Omega_5\rangle$ ($\alpha\in\mathbb{C}$; $\alpha$ and $(-1)^{n-3}\alpha$ give the same orbit), $\langle\Omega_1+\alpha\Omega_2,\Omega_4,\Omega_2+\Omega_3+\Omega_5\rangle$ ($\alpha\in\mathbb{C}$) and $\langle\Omega_1+\alpha\Omega_2,\Omega_3+\Omega_4,\beta\Omega_2+\Omega_3+\Omega_5\rangle$ ($\alpha,\beta\in\mathbb{C}$). In these families the values $\alpha=0$ and $\beta=0$ are allowed; they correspond to the cases $x_2=0$ and $z_2=0$, respectively.

\textbf{Case $r=2$.} Then $K$ is a line in $\langle\Omega_2,\Omega_3\rangle$, on which the action is diagonal ($\beta_2'=a_1b_{n-1}\beta_2$, $\beta_3'=b_{n-1}^2\beta_3$). Hence $K=\langle\Omega_3\rangle$, or $K=\langle\Omega_2\rangle$, or $K=\langle\Omega_2+k\Omega_3\rangle$ with $k\neq0$; in the last case $k'=a_1^{-1}b_{n-1}k$, and choosing $b_{n-1}=\frac{a_1}{k}$ we obtain $K=\langle\Omega_2+\Omega_3\rangle$, which is preserved exactly by the automorphisms with $b_{n-1}=a_1$. The two-dimensional subspace $P=\pi(W)$ of $\langle\Omega_1,\Omega_4,\Omega_5\rangle$ is transformed diagonally; by the reduced row echelon form, it is one of
\[P=\langle\Omega_1+u\Omega_5,\ \Omega_4+v\Omega_5\rangle,\qquad P=\langle\Omega_1+p\Omega_4,\ \Omega_5\rangle,\qquad P=\langle\Omega_4,\Omega_5\rangle,\]
where $u'=a_1b_{n-1}^{3-n}u$, $v'=a_1^{-1}b_{n-1}v$, $p'=a_1^2b_{n-1}^{2-n}p$, and the vanishing of $u,v,p$ is invariant. Then $W=K\oplus\langle\theta,\theta'\rangle$, where $\theta,\theta'$ are the elements of $W$ projecting to the indicated basis of $P$; they are determined modulo $K$, so each of them has one more coefficient, which we write in the form $x\Omega_3$, $y\Omega_3$ (if $K=\langle\Omega_2\rangle$ or $K=\langle\Omega_2+\Omega_3\rangle$) or $x\Omega_2$, $y\Omega_2$ (if $K=\langle\Omega_3\rangle$).

\emph{$K=\langle\Omega_2\rangle$.} Modulo $\Omega_2$ the formula $(\ref{dagger'})$ is $\beta_3\mapsto\beta_3+b_{n-2}\beta_1+b_n\beta_5$, and by $(\ref{ddagger'})$ the coefficient of $\Omega_3$ in $\theta$ (respectively $\theta'$) has the character $b_{n-1}^2$ divided by the character of the leading coefficient of $\theta$ (respectively $\theta'$).
\begin{itemize}
\item $P=\langle\Omega_1+u\Omega_5,\Omega_4+v\Omega_5\rangle$: $x\mapsto x+b_{n-2}+b_nu$, $y\mapsto y+b_nv$. If $v\neq0$, we choose $b_n=-\frac{y}{v}$ and $b_{n-2}=-x-b_nu$, and obtain $x=y=0$; we have the representatives $\langle\Omega_2,\Omega_1+\Omega_5,\Omega_4+\Omega_5\rangle$ ($uv\neq0$; by choosing $b_{n-1}=\sqrt[4-n]{\frac{1}{uv}}$, $a_1=b_{n-1}v$) and $\langle\Omega_2,\Omega_1,\Omega_4+\Omega_5\rangle$ ($u=0$; by choosing $a_1=v$, $b_{n-1}=1$). If $v=0$, we choose $b_{n-2}=-x-b_nu$ and obtain $x=0$; then $y$ does not change and $y'=a_1^{-3}b_{n-1}^2y$. We have the representatives $\langle\Omega_2,\Omega_1+\Omega_5,\Omega_4\rangle$, $\langle\Omega_2,\Omega_1+\Omega_5,\Omega_3+\Omega_4\rangle$ (by choosing $b_{n-1}=\sqrt[3n-11]{yu^3}$, $a_1=\frac{b_{n-1}^{n-3}}{u}$; note that $3n\neq11$), $\langle\Omega_2,\Omega_1,\Omega_4\rangle$ and $\langle\Omega_2,\Omega_1,\Omega_3+\Omega_4\rangle$ (by choosing $a_1=1$, $b_{n-1}=\frac{1}{\sqrt{y}}$).
\item $P=\langle\Omega_1+p\Omega_4,\Omega_5\rangle$: $x\mapsto x+b_{n-2}$, $y\mapsto y+b_n$. Hence $x=y=0$, and we have the representatives $\langle\Omega_2,\Omega_1+\Omega_4,\Omega_5\rangle$ (by choosing $a_1=\frac{1}{\sqrt{p}}$, $b_{n-1}=1$) and $\langle\Omega_2,\Omega_1,\Omega_5\rangle$.
\item $P=\langle\Omega_4,\Omega_5\rangle$: $x$ does not change, $y\mapsto y+b_n$. Hence $y=0$, $x'=a_1^{-3}b_{n-1}^2x$, and we have the representatives $\langle\Omega_2,\Omega_4,\Omega_5\rangle$ and $\langle\Omega_2,\Omega_3+\Omega_4,\Omega_5\rangle$.
\end{itemize}

\emph{$K=\langle\Omega_3\rangle$.} Modulo $\Omega_3$ the formula $(\ref{dagger'})$ is $\beta_2\mapsto\beta_2+b_n\beta_4$, and the parameter $b_{n-2}$ does not act.
\begin{itemize}
\item $P=\langle\Omega_1+u\Omega_5,\Omega_4+v\Omega_5\rangle$: $x$ does not change, $y\mapsto y+b_n$. Hence $y=0$, and $x'=b_{n-1}^{3-n}x$. If $uv\neq0$, then normalizing $u=v=1$ we get $b_{n-1}=a_1$, $a_1^{n-4}=1$, and $x$ is multiplied by $a_1^{3-n}=a_1^{-1}$; we have the representatives $\langle\Omega_3,\Omega_1+\Omega_5,\Omega_4+\Omega_5\rangle$ and $\langle\Omega_3,\Omega_1+\alpha\Omega_2+\Omega_5,\Omega_4+\Omega_5\rangle$, $\alpha\neq0$, where $\alpha$ and $\zeta\alpha$ ($\zeta^{n-4}=1$) give the same orbit. If $u\neq0$, $v=0$, then the characters of $x$ and $u$ are independent, and we have the representatives $\langle\Omega_3,\Omega_1+\Omega_5,\Omega_4\rangle$, $\langle\Omega_3,\Omega_1+\Omega_2+\Omega_5,\Omega_4\rangle$; similarly, for $u=0$, $v\neq0$ we have $\langle\Omega_3,\Omega_1,\Omega_4+\Omega_5\rangle$, $\langle\Omega_3,\Omega_1+\Omega_2,\Omega_4+\Omega_5\rangle$, and for $u=v=0$ we have $\langle\Omega_3,\Omega_1,\Omega_4\rangle$, $\langle\Omega_3,\Omega_1+\Omega_2,\Omega_4\rangle$.
\item $P=\langle\Omega_1+p\Omega_4,\Omega_5\rangle$: $x\mapsto x+b_np$, and $y$ does not change; $y'=a_1^{-1}y$. If $p\neq0$, we choose $b_n=-\frac{x}{p}$ and obtain $x=0$; we have the representatives $\langle\Omega_3,\Omega_1+\Omega_4,\Omega_5\rangle$ and $\langle\Omega_3,\Omega_1+\Omega_4,\Omega_2+\Omega_5\rangle$ (by choosing $a_1=y$, $b_{n-1}=\sqrt[n-2]{a_1^2p}$). If $p=0$, then $x'=b_{n-1}^{3-n}x$, $y'=a_1^{-1}y$, and we have the representatives $\langle\Omega_3,\Omega_1,\Omega_5\rangle$, $\langle\Omega_3,\Omega_1+\Omega_2,\Omega_5\rangle$, $\langle\Omega_3,\Omega_1,\Omega_2+\Omega_5\rangle$, $\langle\Omega_3,\Omega_1+\Omega_2,\Omega_2+\Omega_5\rangle$.
\item $P=\langle\Omega_4,\Omega_5\rangle$: $x\mapsto x+b_n$, so $x=0$; $y'=a_1^{-1}y$, and we have the representatives $\langle\Omega_3,\Omega_4,\Omega_5\rangle$ and $\langle\Omega_3,\Omega_4,\Omega_2+\Omega_5\rangle$.
\end{itemize}

\emph{$K=\langle\Omega_2+\Omega_3\rangle$.} Here $b_{n-1}=a_1$, and modulo $K$ an element $y_2\Omega_2+y_3\Omega_3$ of $\langle\Omega_2,\Omega_3\rangle$ is equal to $(y_3-y_2)\Omega_3$. By $(\ref{dagger'})$ and $(\ref{ddagger'})$, for $\theta=\beta_1\Omega_1+\beta_4\Omega_4+\beta_5\Omega_5+x\Omega_3$ we have $x\mapsto x+b_{n-2}\beta_1+b_n(\beta_5-\beta_4)$, and the characters of $\beta_1,\beta_4,\beta_5,x$ are $a_1^{n-1},a_1^3,a_1^3,a_1^2$. In particular $u'=a_1^{4-n}u$, $p'=a_1^{4-n}p$ and $v'=v$, i.e. $v$ is an invariant.
\begin{itemize}
\item $P=\langle\Omega_1+u\Omega_5,\Omega_4+v\Omega_5\rangle$: $x\mapsto x+b_{n-2}+b_nu$, $y\mapsto y+b_n(v-1)$. If $v\neq1$, then choosing $b_n=-\frac{y}{v-1}$, $b_{n-2}=-x-b_nu$ we obtain $x=y=0$, and we have the representatives $\langle\Omega_2+\Omega_3,\Omega_1+\Omega_5,\Omega_4+\alpha\Omega_5\rangle$, $\alpha\neq1$ (for $u\neq0$, by choosing $a_1=b_{n-1}=\sqrt[n-4]{u}$) and $\langle\Omega_2+\Omega_3,\Omega_1,\Omega_4+\alpha\Omega_5\rangle$, $\alpha\neq1$ (for $u=0$). If $v=1$, then choosing $b_{n-2}=-x-b_nu$ we obtain $x=0$, and $y$ does not change; $y'=a_1^{-1}y$. For $u\neq0$ we normalize $u=1$, so that $a_1^{n-4}=1$, and we have the representatives $\langle\Omega_2+\Omega_3,\Omega_1+\Omega_5,\alpha\Omega_3+\Omega_4+\Omega_5\rangle$, $\alpha\in\mathbb{C}$, where $\alpha$ and $\zeta\alpha$ ($\zeta^{n-4}=1$) give the same orbit; for $u=0$ we have the representatives $\langle\Omega_2+\Omega_3,\Omega_1,\Omega_4+\Omega_5\rangle$ and $\langle\Omega_2+\Omega_3,\Omega_1,\Omega_3+\Omega_4+\Omega_5\rangle$ (by choosing $a_1=b_{n-1}=y$).
\item $P=\langle\Omega_1+p\Omega_4,\Omega_5\rangle$: $x\mapsto x+b_{n-2}-b_np$, $y\mapsto y+b_n$. Hence $x=y=0$, and we have the representatives $\langle\Omega_2+\Omega_3,\Omega_1+\Omega_4,\Omega_5\rangle$ (by choosing $a_1=b_{n-1}=\sqrt[n-4]{p}$) and $\langle\Omega_2+\Omega_3,\Omega_1,\Omega_5\rangle$.
\item $P=\langle\Omega_4,\Omega_5\rangle$: $x\mapsto x-b_n$, $y\mapsto y+b_n$. Choosing $b_n=x$ we obtain $x=0$; then $y'=a_1^{-1}y$, and we have the representatives $\langle\Omega_2+\Omega_3,\Omega_4,\Omega_5\rangle$ and $\langle\Omega_2+\Omega_3,\Omega_4,\Omega_3+\Omega_5\rangle$.
\end{itemize}

The orbits obtained are pairwise distinct: $r$, the orbit of $K$, the orbit of $\pi(W)$ (for $K=\langle\Omega_2+\Omega_3\rangle$ including the value of $v$) are invariants; after $K$ and $\pi(W)$ are normalized, only the automorphisms preserving the normal forms can be used, their action on the remaining coefficients is written above, the vanishing of these coefficients is invariant, and the identifications of the parameters are given by the finite stabilizers indicated above. Thus we obtain the following $53$ items:
\begingroup\small
\begin{longtable}{ll}
$\omega^{2,3}_{1}$: & $\langle \Omega_{2},\ \Omega_{3},\ \Omega_{1}\rangle$\\
$\omega^{2,3}_{2}$: & $\langle \Omega_{2},\ \Omega_{3},\ \Omega_{4}\rangle$\\
$\omega^{2,3}_{3}$: & $\langle \Omega_{2},\ \Omega_{3},\ \Omega_{5}\rangle$\\
$\omega^{2,3}_{4}$: & $\langle \Omega_{2},\ \Omega_{3},\ \Omega_{1}+\Omega_{4}\rangle^{\ast}$\\
$\omega^{2,3}_{5}$: & $\langle \Omega_{2},\ \Omega_{3},\ \Omega_{1}+\Omega_{5}\rangle$\\
$\omega^{2,3}_{6}$: & $\langle \Omega_{2},\ \Omega_{3},\ \Omega_{4}+\Omega_{5}\rangle$\\
$\omega^{2,3}_{7}$: & $\langle \Omega_{2},\ \Omega_{3},\ \Omega_{1}+\Omega_{4}+\Omega_{5}\rangle^{\ast}$\\
$\omega^{2,3}_{8}$: & $\langle \Omega_{2},\ \Omega_{1}+\Omega_{5},\ \Omega_{4}+\Omega_{5}\rangle^{\ast}$\\
$\omega^{2,3}_{9}$: & $\langle \Omega_{2},\ \Omega_{1},\ \Omega_{4}+\Omega_{5}\rangle^{\ast}$\\
$\omega^{2,3}_{10}$: & $\langle \Omega_{2},\ \Omega_{1}+\Omega_{5},\ \Omega_{4}\rangle^{\ast}$\\
$\omega^{2,3}_{11}$: & $\langle \Omega_{2},\ \Omega_{1}+\Omega_{5},\ \Omega_{3}+\Omega_{4}\rangle^{\ast}$\\
$\omega^{2,3}_{12}$: & $\langle \Omega_{2},\ \Omega_{1},\ \Omega_{4}\rangle^{\ast}$\\
$\omega^{2,3}_{13}$: & $\langle \Omega_{2},\ \Omega_{1},\ \Omega_{3}+\Omega_{4}\rangle^{\ast}$\\
$\omega^{2,3}_{14}$: & $\langle \Omega_{2},\ \Omega_{1}+\Omega_{4},\ \Omega_{5}\rangle^{\ast}$\\
$\omega^{2,3}_{15}$: & $\langle \Omega_{2},\ \Omega_{1},\ \Omega_{5}\rangle^{\ast}$\\
$\omega^{2,3}_{16}$: & $\langle \Omega_{2},\ \Omega_{4},\ \Omega_{5}\rangle$\\
$\omega^{2,3}_{17}$: & $\langle \Omega_{2},\ \Omega_{3}+\Omega_{4},\ \Omega_{5}\rangle$\\
$\omega^{2,3}_{18}$: & $\langle \Omega_{3},\ \Omega_{1}+\Omega_{5},\ \Omega_{4}+\Omega_{5}\rangle^{\ast}$\\
$\omega^{2,3}_{19}$: & $\langle \Omega_{3},\ \Omega_{1}+\alpha \Omega_{2}+\Omega_{5},\ \Omega_{4}+\Omega_{5}\rangle^{\ast}$ \ ($\alpha\neq0;\ \alpha\sim\zeta\alpha,\ \zeta^{n-4}=1$)\\
$\omega^{2,3}_{20}$: & $\langle \Omega_{3},\ \Omega_{1}+\Omega_{5},\ \Omega_{4}\rangle^{\ast}$\\
$\omega^{2,3}_{21}$: & $\langle \Omega_{3},\ \Omega_{1}+\Omega_{2}+\Omega_{5},\ \Omega_{4}\rangle^{\ast}$\\
$\omega^{2,3}_{22}$: & $\langle \Omega_{3},\ \Omega_{1},\ \Omega_{4}+\Omega_{5}\rangle^{\ast}$\\
$\omega^{2,3}_{23}$: & $\langle \Omega_{3},\ \Omega_{1}+\Omega_{2},\ \Omega_{4}+\Omega_{5}\rangle^{\ast}$\\
$\omega^{2,3}_{24}$: & $\langle \Omega_{3},\ \Omega_{1},\ \Omega_{4}\rangle^{\ast}$\\
$\omega^{2,3}_{25}$: & $\langle \Omega_{3},\ \Omega_{1}+\Omega_{2},\ \Omega_{4}\rangle^{\ast}$\\
$\omega^{2,3}_{26}$: & $\langle \Omega_{3},\ \Omega_{1}+\Omega_{4},\ \Omega_{5}\rangle^{\ast}$\\
$\omega^{2,3}_{27}$: & $\langle \Omega_{3},\ \Omega_{1}+\Omega_{4},\ \Omega_{2}+\Omega_{5}\rangle^{\ast}$\\
$\omega^{2,3}_{28}$: & $\langle \Omega_{3},\ \Omega_{1},\ \Omega_{5}\rangle^{\ast}$\\
$\omega^{2,3}_{29}$: & $\langle \Omega_{3},\ \Omega_{1}+\Omega_{2},\ \Omega_{5}\rangle^{\ast}$\\
$\omega^{2,3}_{30}$: & $\langle \Omega_{3},\ \Omega_{1},\ \Omega_{2}+\Omega_{5}\rangle^{\ast}$\\
$\omega^{2,3}_{31}$: & $\langle \Omega_{3},\ \Omega_{1}+\Omega_{2},\ \Omega_{2}+\Omega_{5}\rangle^{\ast}$\\
$\omega^{2,3}_{32}$: & $\langle \Omega_{3},\ \Omega_{4},\ \Omega_{5}\rangle$\\
$\omega^{2,3}_{33}$: & $\langle \Omega_{3},\ \Omega_{4},\ \Omega_{2}+\Omega_{5}\rangle$\\
$\omega^{2,3}_{34}$: & $\langle \Omega_{2}+\Omega_{3},\ \Omega_{1}+\Omega_{5},\ \Omega_{4}+\alpha \Omega_{5}\rangle^{\ast}$ \ ($\alpha\neq1$)\\
$\omega^{2,3}_{35}$: & $\langle \Omega_{2}+\Omega_{3},\ \Omega_{1},\ \Omega_{4}+\alpha \Omega_{5}\rangle^{\ast}$ \ ($\alpha\neq1$)\\
$\omega^{2,3}_{36}$: & $\langle \Omega_{2}+\Omega_{3},\ \Omega_{1}+\Omega_{5},\ \alpha \Omega_{3}+\Omega_{4}+\Omega_{5}\rangle^{\ast}$ \ ($\alpha\in\mathbb{C};\ \alpha\sim\zeta\alpha,\ \zeta^{n-4}=1$)\\
$\omega^{2,3}_{37}$: & $\langle \Omega_{2}+\Omega_{3},\ \Omega_{1},\ \Omega_{4}+\Omega_{5}\rangle^{\ast}$\\
$\omega^{2,3}_{38}$: & $\langle \Omega_{2}+\Omega_{3},\ \Omega_{1},\ \Omega_{3}+\Omega_{4}+\Omega_{5}\rangle^{\ast}$\\
$\omega^{2,3}_{39}$: & $\langle \Omega_{2}+\Omega_{3},\ \Omega_{1}+\Omega_{4},\ \Omega_{5}\rangle^{\ast}$\\
$\omega^{2,3}_{40}$: & $\langle \Omega_{2}+\Omega_{3},\ \Omega_{1},\ \Omega_{5}\rangle^{\ast}$\\
$\omega^{2,3}_{41}$: & $\langle \Omega_{2}+\Omega_{3},\ \Omega_{4},\ \Omega_{5}\rangle$\\
$\omega^{2,3}_{42}$: & $\langle \Omega_{2}+\Omega_{3},\ \Omega_{4},\ \Omega_{3}+\Omega_{5}\rangle$\\
$\omega^{2,3}_{43}$: & $\langle \Omega_{1},\ \Omega_{4},\ \Omega_{5}\rangle^{\ast}$\\
$\omega^{2,3}_{44}$: & $\langle \Omega_{1}+\Omega_{2},\ \Omega_{4},\ \Omega_{5}\rangle^{\ast}$\\
$\omega^{2,3}_{45}$: & $\langle \Omega_{1},\ \Omega_{4},\ \Omega_{2}+\Omega_{5}\rangle^{\ast}$\\
$\omega^{2,3}_{46}$: & $\langle \Omega_{1},\ \Omega_{3}+\Omega_{4},\ \Omega_{5}\rangle^{\ast}$\\
$\omega^{2,3}_{47}$: & $\langle \Omega_{1},\ \Omega_{4},\ \Omega_{3}+\Omega_{5}\rangle^{\ast}$\\
$\omega^{2,3}_{48}$: & $\langle \Omega_{1}+\Omega_{2},\ \Omega_{4},\ \Omega_{2}+\Omega_{5}\rangle^{\ast}$\\
$\omega^{2,3}_{49}$: & $\langle \Omega_{1}+\Omega_{2},\ \Omega_{3}+\Omega_{4},\ \Omega_{5}\rangle^{\ast}$\\
$\omega^{2,3}_{50}$: & $\langle \Omega_{1}+\Omega_{2},\ \Omega_{4},\ \Omega_{3}+\Omega_{5}\rangle^{\ast}$\\
$\omega^{2,3}_{51}$: & $\langle \Omega_{1}+\alpha \Omega_{2},\ \Omega_{3}+\Omega_{4},\ \Omega_{2}+\Omega_{5}\rangle^{\ast}$ \ ($\alpha\in\mathbb{C};\ \alpha\sim-\alpha \text{ if } n \text{ is even}$)\\
$\omega^{2,3}_{52}$: & $\langle \Omega_{1}+\alpha \Omega_{2},\ \Omega_{4},\ \Omega_{2}+\Omega_{3}+\Omega_{5}\rangle^{\ast}$ \ ($\alpha\in\mathbb{C}$)\\
$\omega^{2,3}_{53}$: & $\langle \Omega_{1}+\alpha \Omega_{2},\ \Omega_{3}+\Omega_{4},\ \beta \Omega_{2}+\Omega_{3}+\Omega_{5}\rangle^{\ast}$ \ ($\alpha,\beta\in\mathbb{C}$)\\
\end{longtable}
\endgroup

\subsubsection{$4$-dimensional central extensions of $\mu_n^2$}
Let $W\in G_4({\rm H}^{2}(\mu_n^2,\mathbb{C}))$, and let $\pi(W)$, $K$ and $r=\dim\pi(W)=4-\dim K$ be as above; now $r\in\{2,3\}$.

\textbf{Case $r=2$.} Then $W=\langle\Omega_2,\Omega_3\rangle\oplus P$, where $P$ is a two-dimensional subspace of $\langle\Omega_1,\Omega_4,\Omega_5\rangle$ transformed diagonally, and we have the representatives $\langle\Omega_2,\Omega_3,\Omega_4,\Omega_5\rangle$, $\langle\Omega_2,\Omega_3,\Omega_1,\Omega_5\rangle$, $\langle\Omega_2,\Omega_3,\Omega_1,\Omega_4\rangle$, $\langle\Omega_2,\Omega_3,\Omega_1+\Omega_4,\Omega_5\rangle$, $\langle\Omega_2,\Omega_3,\Omega_1+\Omega_5,\Omega_4\rangle$, $\langle\Omega_2,\Omega_3,\Omega_1,\Omega_4+\Omega_5\rangle$ and $\langle\Omega_2,\Omega_3,\Omega_1+\Omega_5,\Omega_4+\Omega_5\rangle$ (in the last case the characters $a_1b_{n-1}^{3-n}$ and $a_1^{-1}b_{n-1}$ of the two coefficients are independent since $n\neq4$).

\textbf{Case $r=3$.} Then $K$ is a line, and as above we may assume $K=\langle\Omega_2\rangle$, $K=\langle\Omega_3\rangle$ or $K=\langle\Omega_2+\Omega_3\rangle$ (in the last case $b_{n-1}=a_1$ afterwards). Further, $W=K\oplus\langle\Omega_1+x\Omega_\ast,\ \Omega_4+y\Omega_\ast,\ \Omega_5+z\Omega_\ast\rangle$, where $\Omega_\ast=\Omega_3$ for $K=\langle\Omega_2\rangle$ or $K=\langle\Omega_2+\Omega_3\rangle$, and $\Omega_\ast=\Omega_2$ for $K=\langle\Omega_3\rangle$.
\begin{itemize}
\item $K=\langle\Omega_2\rangle$: by $(\ref{dagger'})$ $x\mapsto x+b_{n-2}$, $z\mapsto z+b_n$, and $y$ does not change. Choosing $b_{n-2}=-x$, $b_n=-z$ we obtain $x=z=0$, and since $y'=a_1^{-3}b_{n-1}^2y$ we have the representatives $\langle\Omega_2,\Omega_1,\Omega_4,\Omega_5\rangle$ and $\langle\Omega_2,\Omega_1,\Omega_3+\Omega_4,\Omega_5\rangle$.
\item $K=\langle\Omega_3\rangle$: $y\mapsto y+b_n$, and $x$, $z$ do not change. Choosing $b_n=-y$ we obtain $y=0$; since $x'=b_{n-1}^{3-n}x$ and $z'=a_1^{-1}z$ have independent characters, we have the representatives $\langle\Omega_3,\Omega_1,\Omega_4,\Omega_5\rangle$, $\langle\Omega_3,\Omega_1+\Omega_2,\Omega_4,\Omega_5\rangle$, $\langle\Omega_3,\Omega_1,\Omega_4,\Omega_2+\Omega_5\rangle$, $\langle\Omega_3,\Omega_1+\Omega_2,\Omega_4,\Omega_2+\Omega_5\rangle$.
\item $K=\langle\Omega_2+\Omega_3\rangle$: $x\mapsto x+b_{n-2}$, $y\mapsto y-b_n$, $z\mapsto z+b_n$. Choosing $b_{n-2}=-x$, $b_n=y$ we obtain $x=y=0$; since $z'=a_1^{-1}z$, we have the representatives $\langle\Omega_2+\Omega_3,\Omega_1,\Omega_4,\Omega_5\rangle$ and $\langle\Omega_2+\Omega_3,\Omega_1,\Omega_4,\Omega_3+\Omega_5\rangle$.
\end{itemize}
These orbits are pairwise distinct ($r$, the orbit of $K$ and the vanishing of the remaining coefficient(s) are invariant). Thus we obtain the following $15$ orbits:
\begingroup\small
\begin{longtable}{ll}
$\omega^{2,4}_{1}$: & $\langle \Omega_{2},\ \Omega_{3},\ \Omega_{4},\ \Omega_{5}\rangle$\\
$\omega^{2,4}_{2}$: & $\langle \Omega_{2},\ \Omega_{3},\ \Omega_{1},\ \Omega_{5}\rangle^{\ast}$\\
$\omega^{2,4}_{3}$: & $\langle \Omega_{2},\ \Omega_{3},\ \Omega_{1},\ \Omega_{4}\rangle^{\ast}$\\
$\omega^{2,4}_{4}$: & $\langle \Omega_{2},\ \Omega_{3},\ \Omega_{1}+\Omega_{4},\ \Omega_{5}\rangle^{\ast}$\\
$\omega^{2,4}_{5}$: & $\langle \Omega_{2},\ \Omega_{3},\ \Omega_{1}+\Omega_{5},\ \Omega_{4}\rangle^{\ast}$\\
$\omega^{2,4}_{6}$: & $\langle \Omega_{2},\ \Omega_{3},\ \Omega_{1},\ \Omega_{4}+\Omega_{5}\rangle^{\ast}$\\
$\omega^{2,4}_{7}$: & $\langle \Omega_{2},\ \Omega_{3},\ \Omega_{1}+\Omega_{5},\ \Omega_{4}+\Omega_{5}\rangle^{\ast}$\\
$\omega^{2,4}_{8}$: & $\langle \Omega_{2},\ \Omega_{1},\ \Omega_{4},\ \Omega_{5}\rangle^{\ast}$\\
$\omega^{2,4}_{9}$: & $\langle \Omega_{2},\ \Omega_{1},\ \Omega_{3}+\Omega_{4},\ \Omega_{5}\rangle^{\ast}$\\
$\omega^{2,4}_{10}$: & $\langle \Omega_{3},\ \Omega_{1},\ \Omega_{4},\ \Omega_{5}\rangle^{\ast}$\\
$\omega^{2,4}_{11}$: & $\langle \Omega_{3},\ \Omega_{1}+\Omega_{2},\ \Omega_{4},\ \Omega_{5}\rangle^{\ast}$\\
$\omega^{2,4}_{12}$: & $\langle \Omega_{3},\ \Omega_{1},\ \Omega_{4},\ \Omega_{2}+\Omega_{5}\rangle^{\ast}$\\
$\omega^{2,4}_{13}$: & $\langle \Omega_{3},\ \Omega_{1}+\Omega_{2},\ \Omega_{4},\ \Omega_{2}+\Omega_{5}\rangle^{\ast}$\\
$\omega^{2,4}_{14}$: & $\langle \Omega_{2}+\Omega_{3},\ \Omega_{1},\ \Omega_{4},\ \Omega_{5}\rangle^{\ast}$\\
$\omega^{2,4}_{15}$: & $\langle \Omega_{2}+\Omega_{3},\ \Omega_{1},\ \Omega_{4},\ \Omega_{3}+\Omega_{5}\rangle^{\ast}$\\
\end{longtable}
\endgroup

\subsubsection{$5$-dimensional central extensions of $\mu_n^2$}
There is only one five-dimensional subspace, $W={\rm H}^2(\mu_n^2,\mathbb{C})$; it lies in $T_5(\mu_n^2)$, and we obtain the unique algebra $\omega^{2,5}_{1}$.

\subsection{Central extensions of $\mu_5^3$} Let us use the following notations:
\[\nabla_1=[-i\Delta_{2,1}+\Delta_{2,4}], \ \nabla_2=[\Delta_{4,1}], \  \nabla_3=[\Delta_{4,4}], \ \nabla_4=[i\Delta_{2,1}+ \Delta_{5,1}].\]
Take $\theta=\alpha_1\nabla_1+\alpha_2\nabla_2+\alpha_3\nabla_3+\alpha_4\nabla_4\in {\rm H}^{2}(\mu_5^3,\mathbb{C}).$
Computing $\phi_3^5\theta(x,y)=\theta(\phi_3^5(x),\phi_3^5(y))$ and reducing modulo ${\rm B}^{2}(\mu_5^3,\mathbb{C})$
we have that the action of ${\rm Aut} (\mu_5^3)$ on the subspace
$\langle \alpha_1\nabla_1+\alpha_2\nabla_2+\alpha_3\nabla_3+\alpha_4\nabla_4  \rangle$
is given by
$\langle \alpha_1^\prime\nabla_1+\alpha_2^\prime\nabla_2+\alpha_3^\prime\nabla_3+\alpha_4^\prime\nabla_4\rangle,$
where
\[\begin{array}{ll}
\alpha_1^\prime=a_1(a_1 + 2i a_4)^2\alpha_1, &
\alpha_2^\prime=(a_1+2ia_4)(a_1\alpha_2+a_4\alpha_3), \\
\alpha_3^\prime=(a_1+2ia_4)^2\alpha_3, &
\alpha_4^\prime=ia_1a_4(a_1+ia_4)\alpha_1+a_1^3\alpha_4.
\end{array}\]
In particular, $\alpha_1'=0$ if and only if $\alpha_1=0$; hence the coefficient $\alpha_1$ cannot be removed (a substitution of the type ``$a_4=ia_4$'' does not lead to $\alpha_1'=0$, since $a_1\neq0$ and $a_1+2ia_4\neq0$ for an automorphism).

Put $c=a_1+2ia_4$; since $a_4=\frac{c-a_1}{2i}$, the numbers $a_1$ and $c$ are arbitrary non-zero numbers. Consider the basis
\[\Omega_1=4\nabla_1+\nabla_4=[\Delta_{2,4}+\Delta_{5,1}+3i\Delta_{5,4}],\ \Omega_2=\nabla_2=[\Delta_{4,1}],\]  \[\Omega_3=2\nabla_3-i\nabla_2=[2\Delta_{4,4}-i\Delta_{4,1}],\ \Omega_4=\nabla_4=[i\Delta_{2,1}+\Delta_{5,1}]\]
(we used $\Delta_{2,1}+i\Delta_{2,4}+\Delta_{5,4}\in{\rm B}^2(\mu_5^3,\mathbb{C})$). If $\theta=\sum_{i=1}^4\beta_i\Omega_i$, then a direct computation with the formulas above gives
\[\beta_1'=a_1c^2\beta_1,\qquad \beta_2'=a_1c\beta_2,\qquad \beta_3'=c^2\beta_3,\qquad \beta_4'=a_1^3\beta_4.\]
Thus the action of ${\rm Aut}(\mu_5^3)$ on ${\rm H}^2(\mu_5^3,\mathbb{C})$ is diagonal in the basis $\Omega_1,\dots,\Omega_4$, with two independent parameters $a_1,c$, and the orbits are described by the observation at the end of Section 2: the orbit of $W$ is determined by the pivot positions and the set of non-zero entries of the reduced row echelon basis of $W$, together with the values of the entries which cannot be normalized. By Remark \ref{rem-34}, none of the orbits lies in $T_s(\mu_5^3)$.

\subsubsection{$1$-dimensional central extensions of $\mu_5^3$}
The one-dimensional central extensions of $\mu_5^3$ were classified in \cite{Adashev2023}. In the notation of the present paper (with respect to the basis $\Omega_1,\dots,\Omega_4$) the result is as follows: there are $15$ orbits:
\begingroup\small
\begin{longtable}{ll}
$\omega^{3,1}_{1}$: & $\langle \Omega_{1}\rangle$\\
$\omega^{3,1}_{2}$: & $\langle \Omega_{1}+\Omega_{2}\rangle$\\
$\omega^{3,1}_{3}$: & $\langle \Omega_{1}+\Omega_{3}\rangle$\\
$\omega^{3,1}_{4}$: & $\langle \Omega_{1}+\Omega_{4}\rangle$\\
$\omega^{3,1}_{5}$: & $\langle \Omega_{1}+\Omega_{2}+\Omega_{3}\rangle$\\
$\omega^{3,1}_{6}$: & $\langle \Omega_{1}+\Omega_{2}+\Omega_{4}\rangle$\\
$\omega^{3,1}_{7}$: & $\langle \Omega_{1}+\Omega_{3}+\Omega_{4}\rangle$\\
$\omega^{3,1}_{8}$: & $\langle \Omega_{1}+\Omega_{2}+\Omega_{3}+\alpha \Omega_{4}\rangle$ ($\alpha\neq0$)\\
$\omega^{3,1}_{9}$: & $\langle \Omega_{2}\rangle$\\
$\omega^{3,1}_{10}$: & $\langle \Omega_{2}+\Omega_{3}\rangle$\\
$\omega^{3,1}_{11}$: & $\langle \Omega_{2}+\Omega_{4}\rangle$\\
$\omega^{3,1}_{12}$: & $\langle \Omega_{2}+\Omega_{3}+\Omega_{4}\rangle$\\
$\omega^{3,1}_{13}$: & $\langle \Omega_{3}\rangle$\\
$\omega^{3,1}_{14}$: & $\langle \Omega_{3}+\Omega_{4}\rangle$\\
$\omega^{3,1}_{15}$: & $\langle \Omega_{4}\rangle$\\
\end{longtable}
\endgroup

\subsubsection{$2$-dimensional central extensions of $\mu_5^3$}
Let $W=\langle\theta_1,\theta_2\rangle$ be written in the reduced row echelon form, $\theta_1=\Omega_{p_1}+\sum\alpha_j\Omega_j$, $\theta_2=\Omega_{p_2}+\sum\beta_j\Omega_j$. Normalizing the non-zero coefficients by the choice of $a_1$ and $c$ indicated in the table we obtain the following $35$ orbits:
\begingroup\small
\begin{longtable}{lll}
$\omega^{3,2}_{1}$ &  & $\langle \Omega_{1},\ \Omega_{2}\rangle$\\
$\omega^{3,2}_{2}$ & $a_1=\alpha_{3},\ c=1$ & $\langle \Omega_{1}+\Omega_{3},\ \Omega_{2}\rangle$\\
$\omega^{3,2}_{3}$ & $a_1=\sqrt{\frac{1}{\alpha_{4}}},\ c=1$ & $\langle \Omega_{1}+\Omega_{4},\ \Omega_{2}\rangle$\\
$\omega^{3,2}_{4}$ & $a_1=\beta_{3},\ c=1$ & $\langle \Omega_{1},\ \Omega_{2}+\Omega_{3}\rangle$\\
$\omega^{3,2}_{5}$ & $a_1=\sqrt{\frac{1}{\beta_{4}}},\ c=1$ & $\langle \Omega_{1},\ \Omega_{2}+\Omega_{4}\rangle$\\
$\omega^{3,2}_{6}$ & $a_1=\alpha_{3},\ c=\sqrt{\alpha_{3}^{2}\alpha_{4}}$ & $\langle \Omega_{1}+\Omega_{3}+\Omega_{4},\ \Omega_{2}\rangle$\\
$\omega^{3,2}_{7}$ & $a_1=\alpha_{3},\ c=\frac{\alpha_{3}}{\beta_{3}}$ & $\langle \Omega_{1}+\Omega_{3},\ \Omega_{2}+\Omega_{3}\rangle$\\
$\omega^{3,2}_{8}$ & $a_1=\alpha_{3},\ c=\alpha_{3}^{2}\beta_{4}$ & $\langle \Omega_{1}+\Omega_{3},\ \Omega_{2}+\Omega_{4}\rangle$\\
$\omega^{3,2}_{9}$ & $a_1=\beta_{3},\ c=1$ & $\langle \Omega_{1}+\alpha \Omega_{4},\ \Omega_{2}+\Omega_{3}\rangle$ ($\alpha\neq0$)\\
$\omega^{3,2}_{10}$ & $a_1=\sqrt{\frac{\alpha_{4}}{\beta_{4}^{2}}},\ c=\frac{\alpha_{4}}{\beta_{4}}$ & $\langle \Omega_{1}+\Omega_{4},\ \Omega_{2}+\Omega_{4}\rangle$\\
$\omega^{3,2}_{11}$ & $a_1=\frac{1}{\beta_{3}\beta_{4}},\ c=\frac{1}{\beta_{3}^{2}\beta_{4}}$ & $\langle \Omega_{1},\ \Omega_{2}+\Omega_{3}+\Omega_{4}\rangle$\\
$\omega^{3,2}_{12}$ & $a_1=\alpha_{3},\ c=\frac{\alpha_{3}}{\beta_{3}}$ & $\langle \Omega_{1}+\Omega_{3}+\alpha \Omega_{4},\ \Omega_{2}+\Omega_{3}\rangle$ ($\alpha\neq0$)\\
$\omega^{3,2}_{13}$ & $a_1=\alpha_{3},\ c=\alpha_{3}^{2}\beta_{4}$ & $\langle \Omega_{1}+\Omega_{3}+\alpha \Omega_{4},\ \Omega_{2}+\Omega_{4}\rangle$ ($\alpha\neq0$)\\
$\omega^{3,2}_{14}$ & $a_1=\alpha_{3},\ c=\frac{\alpha_{3}}{\beta_{3}}$ & $\langle \Omega_{1}+\Omega_{3},\ \Omega_{2}+\Omega_{3}+\alpha \Omega_{4}\rangle$ ($\alpha\neq0$)\\
$\omega^{3,2}_{15}$ & $a_1=\frac{1}{\beta_{3}\beta_{4}},\ c=\frac{1}{\beta_{3}^{2}\beta_{4}}$ & $\langle \Omega_{1}+\alpha \Omega_{4},\ \Omega_{2}+\Omega_{3}+\Omega_{4}\rangle$ ($\alpha\neq0$)\\
$\omega^{3,2}_{16}$ & $a_1=\alpha_{3},\ c=\frac{\alpha_{3}}{\beta_{3}}$ & $\langle \Omega_{1}+\Omega_{3}+\alpha \Omega_{4},\ \Omega_{2}+\Omega_{3}+\beta \Omega_{4}\rangle$ ($\alpha\neq0,\ \beta\neq0$)\\
$\omega^{3,2}_{17}$ &  & $\langle \Omega_{1},\ \Omega_{3}\rangle$\\
$\omega^{3,2}_{18}$ & $a_1=1,\ c=\alpha_{2}$ & $\langle \Omega_{1}+\Omega_{2},\ \Omega_{3}\rangle$\\
$\omega^{3,2}_{19}$ & $a_1=\sqrt{\frac{1}{\alpha_{4}}},\ c=1$ & $\langle \Omega_{1}+\Omega_{4},\ \Omega_{3}\rangle$\\
$\omega^{3,2}_{20}$ & $a_1=\sqrt[3]{\frac{1}{\beta_{4}}},\ c=1$ & $\langle \Omega_{1},\ \Omega_{3}+\Omega_{4}\rangle$\\
$\omega^{3,2}_{21}$ & $a_1=\sqrt{\frac{\alpha_{2}^{2}}{\alpha_{4}}},\ c=\alpha_{2}$ & $\langle \Omega_{1}+\Omega_{2}+\Omega_{4},\ \Omega_{3}\rangle$\\
$\omega^{3,2}_{22}$ & $a_1=\sqrt[3]{\frac{\alpha_{2}^{2}}{\beta_{4}}},\ c=\alpha_{2}$ & $\langle \Omega_{1}+\Omega_{2},\ \Omega_{3}+\Omega_{4}\rangle$\\
$\omega^{3,2}_{23}$ & $a_1=\frac{\alpha_{4}}{\beta_{4}},\ c=\sqrt{\frac{\alpha_{4}^{3}}{\beta_{4}^{2}}}$ & $\langle \Omega_{1}+\Omega_{4},\ \Omega_{3}+\Omega_{4}\rangle$\\
$\omega^{3,2}_{24}$ & $a_1=\sqrt{\frac{\alpha_{2}^{2}}{\alpha_{4}}},\ c=\alpha_{2}$ & $\langle \Omega_{1}+\Omega_{2}+\Omega_{4},\ \Omega_{3}+\alpha \Omega_{4}\rangle$ ($\alpha\neq0;\ \alpha\sim -\alpha$)\\
$\omega^{3,2}_{25}$ &  & $\langle \Omega_{1},\ \Omega_{4}\rangle$\\
$\omega^{3,2}_{26}$ & $a_1=1,\ c=\alpha_{2}$ & $\langle \Omega_{1}+\Omega_{2},\ \Omega_{4}\rangle$\\
$\omega^{3,2}_{27}$ & $a_1=\alpha_{3},\ c=1$ & $\langle \Omega_{1}+\Omega_{3},\ \Omega_{4}\rangle$\\
$\omega^{3,2}_{28}$ & $a_1=\alpha_{3},\ c=\alpha_{2}$ & $\langle \Omega_{1}+\Omega_{2}+\Omega_{3},\ \Omega_{4}\rangle$\\
$\omega^{3,2}_{29}$ &  & $\langle \Omega_{2},\ \Omega_{3}\rangle$\\
$\omega^{3,2}_{30}$ & $a_1=\sqrt{\frac{1}{\alpha_{4}}},\ c=1$ & $\langle \Omega_{2}+\Omega_{4},\ \Omega_{3}\rangle$\\
$\omega^{3,2}_{31}$ & $a_1=\sqrt[3]{\frac{1}{\beta_{4}}},\ c=1$ & $\langle \Omega_{2},\ \Omega_{3}+\Omega_{4}\rangle$\\
$\omega^{3,2}_{32}$ & $a_1=\frac{\beta_{4}}{\alpha_{4}^{2}},\ c=\frac{\beta_{4}^{2}}{\alpha_{4}^{3}}$ & $\langle \Omega_{2}+\Omega_{4},\ \Omega_{3}+\Omega_{4}\rangle$\\
$\omega^{3,2}_{33}$ &  & $\langle \Omega_{2},\ \Omega_{4}\rangle$\\
$\omega^{3,2}_{34}$ & $a_1=\alpha_{3},\ c=1$ & $\langle \Omega_{2}+\Omega_{3},\ \Omega_{4}\rangle$\\
$\omega^{3,2}_{35}$ &  & $\langle \Omega_{3},\ \Omega_{4}\rangle$\\
\end{longtable}
\endgroup

\subsubsection{$3$-dimensional central extensions of $\mu_5^3$}
Similarly, with $W=\langle\theta_1,\theta_2,\theta_3\rangle$ in the reduced row echelon form ($\theta_3=\Omega_{p_3}+\sum\gamma_j\Omega_j$) we obtain the following $15$ orbits:
\begingroup\small
\begin{longtable}{lll}
$\omega^{3,3}_{1}$ &  & $\langle \Omega_{1},\ \Omega_{2},\ \Omega_{3}\rangle$\\
$\omega^{3,3}_{2}$ & $a_1=\sqrt{\frac{1}{\alpha_{4}}},\ c=1$ & $\langle \Omega_{1}+\Omega_{4},\ \Omega_{2},\ \Omega_{3}\rangle$\\
$\omega^{3,3}_{3}$ & $a_1=\sqrt{\frac{1}{\beta_{4}}},\ c=1$ & $\langle \Omega_{1},\ \Omega_{2}+\Omega_{4},\ \Omega_{3}\rangle$\\
$\omega^{3,3}_{4}$ & $a_1=\sqrt[3]{\frac{1}{\gamma_{4}}},\ c=1$ & $\langle \Omega_{1},\ \Omega_{2},\ \Omega_{3}+\Omega_{4}\rangle$\\
$\omega^{3,3}_{5}$ & $a_1=\sqrt{\frac{\alpha_{4}}{\beta_{4}^{2}}},\ c=\frac{\alpha_{4}}{\beta_{4}}$ & $\langle \Omega_{1}+\Omega_{4},\ \Omega_{2}+\Omega_{4},\ \Omega_{3}\rangle$\\
$\omega^{3,3}_{6}$ & $a_1=\frac{\alpha_{4}}{\gamma_{4}},\ c=\sqrt{\frac{\alpha_{4}^{3}}{\gamma_{4}^{2}}}$ & $\langle \Omega_{1}+\Omega_{4},\ \Omega_{2},\ \Omega_{3}+\Omega_{4}\rangle$\\
$\omega^{3,3}_{7}$ & $a_1=\frac{\gamma_{4}}{\beta_{4}^{2}},\ c=\frac{\gamma_{4}^{2}}{\beta_{4}^{3}}$ & $\langle \Omega_{1},\ \Omega_{2}+\Omega_{4},\ \Omega_{3}+\Omega_{4}\rangle$\\
$\omega^{3,3}_{8}$ & $a_1=\frac{\gamma_{4}}{\beta_{4}^{2}},\ c=\frac{\gamma_{4}^{2}}{\beta_{4}^{3}}$ & $\langle \Omega_{1}+\alpha \Omega_{4},\ \Omega_{2}+\Omega_{4},\ \Omega_{3}+\Omega_{4}\rangle$ ($\alpha\neq0$)\\
$\omega^{3,3}_{9}$ &  & $\langle \Omega_{1},\ \Omega_{2},\ \Omega_{4}\rangle$\\
$\omega^{3,3}_{10}$ & $a_1=\alpha_{3},\ c=1$ & $\langle \Omega_{1}+\Omega_{3},\ \Omega_{2},\ \Omega_{4}\rangle$\\
$\omega^{3,3}_{11}$ & $a_1=\beta_{3},\ c=1$ & $\langle \Omega_{1},\ \Omega_{2}+\Omega_{3},\ \Omega_{4}\rangle$\\
$\omega^{3,3}_{12}$ & $a_1=\alpha_{3},\ c=\frac{\alpha_{3}}{\beta_{3}}$ & $\langle \Omega_{1}+\Omega_{3},\ \Omega_{2}+\Omega_{3},\ \Omega_{4}\rangle$\\
$\omega^{3,3}_{13}$ &  & $\langle \Omega_{1},\ \Omega_{3},\ \Omega_{4}\rangle$\\
$\omega^{3,3}_{14}$ & $a_1=1,\ c=\alpha_{2}$ & $\langle \Omega_{1}+\Omega_{2},\ \Omega_{3},\ \Omega_{4}\rangle$\\
$\omega^{3,3}_{15}$ &  & $\langle \Omega_{2},\ \Omega_{3},\ \Omega_{4}\rangle$\\
\end{longtable}
\endgroup

\subsubsection{$4$-dimensional central extensions of $\mu_5^3$}
There is only one orbit, $W={\rm H}^2(\mu_5^3,\mathbb{C})=\langle\Omega_1,\Omega_2,\Omega_3,\Omega_4\rangle$, which gives the algebra $\omega^{3,4}_{1}$.

\subsection{Central extensions of $\mu_5^4$} Let us use the following notations:
\[\nabla_1=[\Delta_{2,1}],\ \nabla_2=[\Delta_{4,1}], \ \nabla_3=[\Delta_{4,4}],\ \nabla_4=[\Delta_{2,4}+\Delta_{5,1}].\]
Take $\theta=\alpha_1\nabla_1+\alpha_2\nabla_2+\alpha_3\nabla_3+\alpha_4\nabla_4\in {\rm H}^{2}(\mu_5^4,\mathbb{C}).$
Computing $\phi_4^5\theta(x,y)=\theta(\phi_4^5(x),\phi_4^5(y))$ with the automorphisms of Proposition \ref{prop-aut} and reducing modulo ${\rm B}^{2}(\mu_5^4,\mathbb{C})$ we have that the action of ${\rm Aut} (\mu_5^4)$ is given by
\[\alpha_1^\prime=a_1^3\alpha_1,\qquad \alpha_2^\prime=\varepsilon a_1^2\alpha_2,\qquad \alpha_3^\prime=a_1^2\alpha_3,\qquad \alpha_4^\prime=\varepsilon a_1^3\alpha_4,\qquad \varepsilon=\pm1.\]
Thus the action is diagonal, but it depends only on one continuous parameter $a_1$ and the sign $\varepsilon$; in particular, only one coefficient of each subspace can be normalized by $a_1$, and the sign $\varepsilon$ gives the identifications indicated below. By Remark \ref{rem-34}, none of the orbits lies in $T_s(\mu_5^4)$.

\subsubsection{$1$-dimensional central extensions of $\mu_5^4$}
The one-dimensional central extensions of $\mu_5^4$ were classified in \cite{Adashev2023}. In the notation of the present paper the result is as follows: there are $15$ orbits (here, for instance, $\langle\nabla_1+\alpha\nabla_4\rangle$ and $\langle\nabla_1-\alpha\nabla_4\rangle$ are related by the automorphism with $a_1=1$, $\varepsilon=-1$):
\begingroup\small
\begin{longtable}{ll}
$\omega^{4,1}_{1}$: & $\langle \nabla_{1}\rangle$\\
$\omega^{4,1}_{2}$: & $\langle \nabla_{1}+\nabla_{2}\rangle$\\
$\omega^{4,1}_{3}$: & $\langle \nabla_{1}+\nabla_{3}\rangle$\\
$\omega^{4,1}_{4}$: & $\langle \nabla_{1}+\alpha \nabla_{4}\rangle$ ($\alpha\neq0;\ \alpha\sim -\alpha$)\\
$\omega^{4,1}_{5}$: & $\langle \nabla_{1}+\alpha \nabla_{2}+\nabla_{3}\rangle$ ($\alpha\neq0;\ \alpha\sim -\alpha$)\\
$\omega^{4,1}_{6}$: & $\langle \nabla_{1}+\nabla_{2}+\alpha \nabla_{4}\rangle$ ($\alpha\neq0;\ \alpha\sim -\alpha$)\\
$\omega^{4,1}_{7}$: & $\langle \nabla_{1}+\nabla_{3}+\alpha \nabla_{4}\rangle$ ($\alpha\neq0;\ \alpha\sim -\alpha$)\\
$\omega^{4,1}_{8}$: & $\langle \nabla_{1}+\alpha \nabla_{2}+\nabla_{3}+\beta \nabla_{4}\rangle$ ($\alpha\neq0,\ \beta\neq0;\ (\alpha,\beta)\sim(-\alpha,-\beta)$)\\
$\omega^{4,1}_{9}$: & $\langle \nabla_{2}\rangle$\\
$\omega^{4,1}_{10}$: & $\langle \nabla_{2}+\alpha \nabla_{3}\rangle$ ($\alpha\neq0;\ \alpha\sim -\alpha$)\\
$\omega^{4,1}_{11}$: & $\langle \nabla_{2}+\nabla_{4}\rangle$\\
$\omega^{4,1}_{12}$: & $\langle \nabla_{2}+\alpha \nabla_{3}+\nabla_{4}\rangle$ ($\alpha\neq0;\ \alpha\sim -\alpha$)\\
$\omega^{4,1}_{13}$: & $\langle \nabla_{3}\rangle$\\
$\omega^{4,1}_{14}$: & $\langle \nabla_{3}+\nabla_{4}\rangle$\\
$\omega^{4,1}_{15}$: & $\langle \nabla_{4}\rangle$\\
\end{longtable}
\endgroup

\subsubsection{$2$-dimensional central extensions of $\mu_5^4$}
As for $\mu_5^3$, we write $W=\langle\theta_1,\theta_2\rangle$ in the reduced row echelon form, $\theta_1=\nabla_{p_1}+\sum\alpha_j\nabla_j$, $\theta_2=\nabla_{p_2}+\sum\beta_j\nabla_j$, and we indicate the choice of $a_1$ and $\varepsilon$ which normalizes one of the coefficients. We obtain the following $35$ orbits:
\begingroup\small
\begin{longtable}{lll}
$\omega^{4,2}_{1}$ &  & $\langle \nabla_{1},\ \nabla_{2}\rangle$\\
$\omega^{4,2}_{2}$ & $a_1=\alpha_{3},\ \varepsilon=1$ & $\langle \nabla_{1}+\nabla_{3},\ \nabla_{2}\rangle$\\
$\omega^{4,2}_{3}$ &  & $\langle \nabla_{1}+\alpha \nabla_{4},\ \nabla_{2}\rangle$ ($\alpha\neq0;\ \alpha\sim -\alpha$)\\
$\omega^{4,2}_{4}$ &  & $\langle \nabla_{1},\ \nabla_{2}+\alpha \nabla_{3}\rangle$ ($\alpha\neq0;\ \alpha\sim -\alpha$)\\
$\omega^{4,2}_{5}$ & $a_1=\frac{1}{\beta_{4}},\ \varepsilon=1$ & $\langle \nabla_{1},\ \nabla_{2}+\nabla_{4}\rangle$\\
$\omega^{4,2}_{6}$ & $a_1=\alpha_{3},\ \varepsilon=1$ & $\langle \nabla_{1}+\nabla_{3}+\alpha \nabla_{4},\ \nabla_{2}\rangle$ ($\alpha\neq0;\ \alpha\sim -\alpha$)\\
$\omega^{4,2}_{7}$ & $a_1=\alpha_{3},\ \varepsilon=1$ & $\langle \nabla_{1}+\nabla_{3},\ \nabla_{2}+\alpha \nabla_{3}\rangle$ ($\alpha\neq0;\ \alpha\sim -\alpha$)\\
$\omega^{4,2}_{8}$ & $a_1=\alpha_{3},\ \varepsilon=1$ & $\langle \nabla_{1}+\nabla_{3},\ \nabla_{2}+\alpha \nabla_{4}\rangle$ ($\alpha\neq0$)\\
$\omega^{4,2}_{9}$ &  & $\langle \nabla_{1}+\alpha \nabla_{4},\ \nabla_{2}+\beta \nabla_{3}\rangle$ ($\alpha\neq0,\ \beta\neq0;\ (\alpha,\beta)\sim(-\alpha,-\beta)$)\\
$\omega^{4,2}_{10}$ & $a_1=\frac{1}{\beta_{4}},\ \varepsilon=1$ & $\langle \nabla_{1}+\alpha \nabla_{4},\ \nabla_{2}+\nabla_{4}\rangle$ ($\alpha\neq0;\ \alpha\sim -\alpha$)\\
$\omega^{4,2}_{11}$ & $a_1=\frac{1}{\beta_{4}},\ \varepsilon=1$ & $\langle \nabla_{1},\ \nabla_{2}+\alpha \nabla_{3}+\nabla_{4}\rangle$ ($\alpha\neq0;\ \alpha\sim -\alpha$)\\
$\omega^{4,2}_{12}$ & $a_1=\alpha_{3},\ \varepsilon=1$ & $\langle \nabla_{1}+\nabla_{3}+\alpha \nabla_{4},\ \nabla_{2}+\beta \nabla_{3}\rangle$ ($\alpha\neq0,\ \beta\neq0;\ (\alpha,\beta)\sim(-\alpha,-\beta)$)\\
$\omega^{4,2}_{13}$ & $a_1=\alpha_{3},\ \varepsilon=1$ & $\langle \nabla_{1}+\nabla_{3}+\alpha \nabla_{4},\ \nabla_{2}+\beta \nabla_{4}\rangle$ ($\alpha\neq0,\ \beta\neq0;\ (\alpha,\beta)\sim(-\alpha,\beta)$)\\
$\omega^{4,2}_{14}$ & $a_1=\alpha_{3},\ \varepsilon=1$ & $\langle \nabla_{1}+\nabla_{3},\ \nabla_{2}+\alpha \nabla_{3}+\beta \nabla_{4}\rangle$ ($\alpha\neq0,\ \beta\neq0;\ (\alpha,\beta)\sim(-\alpha,\beta)$)\\
$\omega^{4,2}_{15}$ & $a_1=\frac{1}{\beta_{4}},\ \varepsilon=1$ & $\langle \nabla_{1}+\alpha \nabla_{4},\ \nabla_{2}+\beta \nabla_{3}+\nabla_{4}\rangle$ ($\alpha\neq0,\ \beta\neq0;\ (\alpha,\beta)\sim(-\alpha,-\beta)$)\\
$\omega^{4,2}_{16}$ & $a_1=\alpha_{3},\ \varepsilon=1$ & $\langle \nabla_{1}+\nabla_{3}+\alpha \nabla_{4},\ \nabla_{2}+\beta \nabla_{3}+\gamma \nabla_{4}\rangle$ ($\alpha\neq0,\ \beta\neq0,\ \gamma\neq0;\ (\alpha,\beta,\gamma)\sim(-\alpha,-\beta,\gamma)$)\\
$\omega^{4,2}_{17}$ &  & $\langle \nabla_{1},\ \nabla_{3}\rangle$\\
$\omega^{4,2}_{18}$ & $a_1=\alpha_{2},\ \varepsilon=1$ & $\langle \nabla_{1}+\nabla_{2},\ \nabla_{3}\rangle$\\
$\omega^{4,2}_{19}$ &  & $\langle \nabla_{1}+\alpha \nabla_{4},\ \nabla_{3}\rangle$ ($\alpha\neq0;\ \alpha\sim -\alpha$)\\
$\omega^{4,2}_{20}$ & $a_1=\frac{1}{\beta_{4}},\ \varepsilon=1$ & $\langle \nabla_{1},\ \nabla_{3}+\nabla_{4}\rangle$\\
$\omega^{4,2}_{21}$ & $a_1=\alpha_{2},\ \varepsilon=1$ & $\langle \nabla_{1}+\nabla_{2}+\alpha \nabla_{4},\ \nabla_{3}\rangle$ ($\alpha\neq0;\ \alpha\sim -\alpha$)\\
$\omega^{4,2}_{22}$ & $a_1=\alpha_{2},\ \varepsilon=1$ & $\langle \nabla_{1}+\nabla_{2},\ \nabla_{3}+\alpha \nabla_{4}\rangle$ ($\alpha\neq0$)\\
$\omega^{4,2}_{23}$ & $a_1=\frac{1}{\beta_{4}},\ \varepsilon=1$ & $\langle \nabla_{1}+\alpha \nabla_{4},\ \nabla_{3}+\nabla_{4}\rangle$ ($\alpha\neq0;\ \alpha\sim -\alpha$)\\
$\omega^{4,2}_{24}$ & $a_1=\alpha_{2},\ \varepsilon=1$ & $\langle \nabla_{1}+\nabla_{2}+\alpha \nabla_{4},\ \nabla_{3}+\beta \nabla_{4}\rangle$ ($\alpha\neq0,\ \beta\neq0;\ (\alpha,\beta)\sim(-\alpha,\beta)$)\\
$\omega^{4,2}_{25}$ &  & $\langle \nabla_{1},\ \nabla_{4}\rangle$\\
$\omega^{4,2}_{26}$ & $a_1=\alpha_{2},\ \varepsilon=1$ & $\langle \nabla_{1}+\nabla_{2},\ \nabla_{4}\rangle$\\
$\omega^{4,2}_{27}$ & $a_1=\alpha_{3},\ \varepsilon=1$ & $\langle \nabla_{1}+\nabla_{3},\ \nabla_{4}\rangle$\\
$\omega^{4,2}_{28}$ & $a_1=\alpha_{3},\ \varepsilon=1$ & $\langle \nabla_{1}+\alpha \nabla_{2}+\nabla_{3},\ \nabla_{4}\rangle$ ($\alpha\neq0;\ \alpha\sim -\alpha$)\\
$\omega^{4,2}_{29}$ &  & $\langle \nabla_{2},\ \nabla_{3}\rangle$\\
$\omega^{4,2}_{30}$ & $a_1=\frac{1}{\alpha_{4}},\ \varepsilon=1$ & $\langle \nabla_{2}+\nabla_{4},\ \nabla_{3}\rangle$\\
$\omega^{4,2}_{31}$ & $a_1=\frac{1}{\beta_{4}},\ \varepsilon=1$ & $\langle \nabla_{2},\ \nabla_{3}+\nabla_{4}\rangle$\\
$\omega^{4,2}_{32}$ & $a_1=\frac{1}{\alpha_{4}},\ \varepsilon=1$ & $\langle \nabla_{2}+\nabla_{4},\ \nabla_{3}+\alpha \nabla_{4}\rangle$ ($\alpha\neq0;\ \alpha\sim -\alpha$)\\
$\omega^{4,2}_{33}$ &  & $\langle \nabla_{2},\ \nabla_{4}\rangle$\\
$\omega^{4,2}_{34}$ &  & $\langle \nabla_{2}+\alpha \nabla_{3},\ \nabla_{4}\rangle$ ($\alpha\neq0;\ \alpha\sim -\alpha$)\\
$\omega^{4,2}_{35}$ &  & $\langle \nabla_{3},\ \nabla_{4}\rangle$\\
\end{longtable}
\endgroup

\subsubsection{$3$-dimensional central extensions of $\mu_5^4$}
We obtain the following $15$ orbits:
\begingroup\small
\begin{longtable}{lll}
$\omega^{4,3}_{1}$ &  & $\langle \nabla_{1},\ \nabla_{2},\ \nabla_{3}\rangle$\\
$\omega^{4,3}_{2}$ &  & $\langle \nabla_{1}+\alpha \nabla_{4},\ \nabla_{2},\ \nabla_{3}\rangle$ ($\alpha\neq0;\ \alpha\sim -\alpha$)\\
$\omega^{4,3}_{3}$ & $a_1=\frac{1}{\beta_{4}},\ \varepsilon=1$ & $\langle \nabla_{1},\ \nabla_{2}+\nabla_{4},\ \nabla_{3}\rangle$\\
$\omega^{4,3}_{4}$ & $a_1=\frac{1}{\gamma_{4}},\ \varepsilon=1$ & $\langle \nabla_{1},\ \nabla_{2},\ \nabla_{3}+\nabla_{4}\rangle$\\
$\omega^{4,3}_{5}$ & $a_1=\frac{1}{\beta_{4}},\ \varepsilon=1$ & $\langle \nabla_{1}+\alpha \nabla_{4},\ \nabla_{2}+\nabla_{4},\ \nabla_{3}\rangle$ ($\alpha\neq0;\ \alpha\sim -\alpha$)\\
$\omega^{4,3}_{6}$ & $a_1=\frac{1}{\gamma_{4}},\ \varepsilon=1$ & $\langle \nabla_{1}+\alpha \nabla_{4},\ \nabla_{2},\ \nabla_{3}+\nabla_{4}\rangle$ ($\alpha\neq0;\ \alpha\sim -\alpha$)\\
$\omega^{4,3}_{7}$ & $a_1=\frac{1}{\beta_{4}},\ \varepsilon=1$ & $\langle \nabla_{1},\ \nabla_{2}+\nabla_{4},\ \nabla_{3}+\alpha \nabla_{4}\rangle$ ($\alpha\neq0;\ \alpha\sim -\alpha$)\\
$\omega^{4,3}_{8}$ & $a_1=\frac{1}{\beta_{4}},\ \varepsilon=1$ & $\langle \nabla_{1}+\alpha \nabla_{4},\ \nabla_{2}+\nabla_{4},\ \nabla_{3}+\beta \nabla_{4}\rangle$ ($\alpha\neq0,\ \beta\neq0;\ (\alpha,\beta)\sim(-\alpha,-\beta)$)\\
$\omega^{4,3}_{9}$ &  & $\langle \nabla_{1},\ \nabla_{2},\ \nabla_{4}\rangle$\\
$\omega^{4,3}_{10}$ & $a_1=\alpha_{3},\ \varepsilon=1$ & $\langle \nabla_{1}+\nabla_{3},\ \nabla_{2},\ \nabla_{4}\rangle$\\
$\omega^{4,3}_{11}$ &  & $\langle \nabla_{1},\ \nabla_{2}+\alpha \nabla_{3},\ \nabla_{4}\rangle$ ($\alpha\neq0;\ \alpha\sim -\alpha$)\\
$\omega^{4,3}_{12}$ & $a_1=\alpha_{3},\ \varepsilon=1$ & $\langle \nabla_{1}+\nabla_{3},\ \nabla_{2}+\alpha \nabla_{3},\ \nabla_{4}\rangle$ ($\alpha\neq0;\ \alpha\sim -\alpha$)\\
$\omega^{4,3}_{13}$ &  & $\langle \nabla_{1},\ \nabla_{3},\ \nabla_{4}\rangle$\\
$\omega^{4,3}_{14}$ & $a_1=\alpha_{2},\ \varepsilon=1$ & $\langle \nabla_{1}+\nabla_{2},\ \nabla_{3},\ \nabla_{4}\rangle$\\
$\omega^{4,3}_{15}$ &  & $\langle \nabla_{2},\ \nabla_{3},\ \nabla_{4}\rangle$\\
\end{longtable}
\endgroup

\subsubsection{$4$-dimensional central extensions of $\mu_5^4$}
There is only one orbit, $W={\rm H}^2(\mu_5^4,\mathbb{C})$, which gives the algebra $\omega^{4,4}_{1}$.

\section{The main result}

Summarizing the results of Section 3 we obtain the following theorem.

\begin{theorem}\label{main}
Let $s\geq1$. Every $s$-dimensional central extension $(\mu_k^t)_\theta$ without central components of the naturally graded $2$-filiform Leibniz algebras $\mu_n^1,\mu_n^2$ $(n\geq5)$, $\mu_5^3$, $\mu_5^4$ is isomorphic, by an isomorphism preserving the central ideal $\mathbb V$, to one of the algebras $\omega^{t,s}_k$ listed in the Appendix, and these algebras are pairwise non-isomorphic by isomorphisms preserving $\mathbb V$ (with the identifications of the parameters indicated in the lists). The numbers of items in the lists (a family depending on parameters is counted as one item) and the numbers of items lying in $T_s$ are as follows:
\begin{center}
\begin{tabular}{|c|c|c|c|c|c|}
\hline
 & $s=1$ & $s=2$ & $s=3$ & $s=4$ & $s=5$\\ \hline
$\mu_n^1$ & $11$ $(1)$ & $37$ $(18)$ & $37$ $(29)$ & $11$ $(10)$ & $1$ $(1)$\\ \hline
$\mu_n^2$ & $15$ $(2)$ & $52$ $(25)$ & $53$ $(42)$ & $15$ $(14)$ & $1$ $(1)$\\ \hline
$\mu_5^3$ & $15$ $(0)$ & $35$ $(0)$ & $15$ $(0)$ & $1$ $(0)$ & --\\ \hline
$\mu_5^4$ & $15$ $(0)$ & $35$ $(0)$ & $15$ $(0)$ & $1$ $(0)$ & --\\ \hline
\end{tabular}
\end{center}
(the number of items lying in $T_s$ is given in brackets). In particular, an arbitrary non-split $s$-dimensional central extension of a naturally graded $2$-filiform non-Lie Leibniz algebra with $Ann=\mathbb V$ is isomorphic to one of the algebras marked by an asterisk, and these algebras are pairwise non-isomorphic. For $s\geq6$ ($s\geq 6$ for $\mu_n^1,\mu_n^2$ and $s\geq5$ for $\mu_5^3,\mu_5^4$) there are no such extensions, because $\dim {\rm H}^2$ is equal to $5$ (respectively to $4$).
\end{theorem}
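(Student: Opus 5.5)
The theorem summarizes Section 3, so the proof assembles the pieces already established. Let $\mathcal L$ be one of $\mu_n^1,\mu_n^2,\mu_5^3,\mu_5^4$ and $\mathbb V$ an $s$-dimensional space.

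First, take an arbitrary $s$-dimensional central extension $\mathcal L_\theta$, $\theta=\sum\theta_k e_{n+k}$, without central components. Lemma \ref{lem-comp} shows that the classes $[\theta_1],\dots,[\theta_s]$ are linearly independent in ${\rm H}^2(\mathcal L,\mathbb C)$. Strictly, that lemma is stated under $Ann(\theta)\cap Ann(\mathcal L)=0$. The direction needed here is elementary: a nontrivial relation $\sum c_k[\theta_k]=0$ lets one change the basis of $\mathbb V$ and subtract a coboundary, and this produces a central component. Hence $W_\theta=\langle[\theta_1],\dots,[\theta_s]\rangle\in G_s({\rm H}^2)$.

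In particular $s\le\dim{\rm H}^2$. By Proposition \ref{prop10} this gives $s\le5$ for $\mu_n^1,\mu_n^2$ and $s\le4$ for $\mu_5^3,\mu_5^4$, which is the final assertion of the theorem.

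By Lemma \ref{lem-orb}, isomorphism classes of such extensions under isomorphisms preserving $\mathbb V$ correspond bijectively to $Aut(\mathcal L)$-orbits on $G_s({\rm H}^2)$. So the first claim reduces to two facts about the lists of Section 3:
\begin{itemize}
\item every $W$ lies in the orbit of some listed representative (completeness);
\item distinct listed representatives, with the stated parameter identifications, lie in distinct orbits.
\end{itemize}
Both facts are the content of the case analyses for each algebra and each $s$, built on the action formulas derived from Proposition \ref{prop-aut}. The case splits use invariants:
\begin{itemize}
\item for $\mu_n^1$: the hyperplane ${\rm H}_0$ and the line $\langle\nabla_2\rangle$;
\item for $\mu_n^2$: the subspace $\langle\Omega_2,\Omega_3\rangle$, the projection $\pi(W)$, and $K$;
\item for $\mu_5^3$ and $\mu_5^4$: the action is diagonal, and the reduced-row-echelon observation of Section 2 applies.
\end{itemize}
Then I would check the case trees for exhaustiveness. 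In each branch, once the invariant part is normalized, the remaining coefficients transform under the stabilizer exactly as written. Their vanishing pattern and the residual parameter values are therefore invariants, which gives pairwise distinctness. The counts in the table are obtained by counting the listed items.

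Next, the bracketed numbers. Lemma \ref{lem-orb} gives $Ann(\mathcal L_\theta)=\mathbb V$ exactly when $W\in T_s$. I would test each representative with the explicit criteria already derived:
\begin{itemize}
\item for $\mu_n^1$: the system $\lambda\alpha_1+\mu\alpha_4=0,\ \mu\alpha_5=0$ over $\theta\in W$ has only the trivial solution;
\item for $\mu_n^2$: the analogous system in the $\beta$-coordinates, i.e. $\mu\beta_4=0,\ \lambda\beta_1+\mu\beta_5=0$;
\item for $\mu_5^3,\mu_5^4$: Remark \ref{rem-34} gives $T_s=\emptyset$, hence the zeros.
\end{itemize}
Since $T_s$ is $Aut$-stable, membership is an orbit property. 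It suffices to test one representative, which confirms the asterisks and the bracketed counts.

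Finally, the "in particular" statement. Let $\mathcal A$ be a non-split extension with $Ann(\mathcal A)=\mathbb V$. Every isomorphism maps the annihilator to the annihilator, so any isomorphism between two such algebras automatically preserves $\mathbb V$. Combining this with Lemma \ref{lem-iso}, these algebras are exactly the starred ones and are pairwise non-isomorphic outright.

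The main obstacle is the completeness and distinctness verification for $\mu_n^2$ with $s=2,3$. There the action is not diagonal, and the normalizing parameters interact, for example through the conditions $n\neq4$, $3n\neq11$ and the roots of unity $\zeta$. I would re-derive the stabilizer of each normal form of $K$ and $\pi(W)$ first, and check the residual characters branch by branch.
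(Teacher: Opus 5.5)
Your proposal is correct and follows the paper's route. The theorem is only a summary of Section 3: independence of the classes $[\theta_i]$ together with Lemma \ref{lem-orb} reduces it to the $Aut$-orbit classification on $G_s({\rm H}^2)$ carried out case by case there, and the asterisks and bracketed counts are read off from the explicit $T_s$ criteria and Remark \ref{rem-34}. Your remark that the needed direction of Lemma \ref{lem-comp} (dependent classes give a central component) holds without the hypothesis $Ann(\theta)\cap Ann(\mathcal L)=0$ is a small piece of care that the paper leaves implicit.
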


\begin{remark} For one-dimensional central extensions we have: the only non-split one-dimensional central extension of $\mu_n^1$ with $Ann=\mathbb V$ is $\omega^{1,1}_1$ ($\langle\nabla_1+\nabla_5\rangle$); for $\mu_n^2$ these are $\omega^{2,1}_8$ ($\langle\nabla_1+\nabla_4\rangle$) and $\omega^{2,1}_{15}$ ($\langle\nabla_1+\nabla_5\rangle$); $\mu_5^3$ and $\mu_5^4$ have no such extensions. All the other one-dimensional extensions have a two- or three-dimensional annihilator and can also be obtained as central extensions of smaller algebras.
\end{remark}

\section{Appendix: The list of the algebras}
In the following tables we list, for every algebra $\omega^{t,s}_k$, only the products which differ from the products of $\mu_n^t$ (respectively $\mu_5^t$) given in Theorems \ref{thm2.4} and \ref{thm2.5}; a product $[e_i,e_j]$ which is non-zero in $\mu_n^t$ is written in full. The algebras whose subspace $W$ lies in $T_s$ are marked by an asterisk. The products are obtained from the basis $\theta_1,\dots,\theta_s$ of the representative $W$ listed in Section 3, taken in the order in which it is written there.

\begingroup\footnotesize
\begin{longtable}{|p{2.0cm}|p{14.0cm}|}
\hline
\multicolumn{2}{|c|}{Central extensions of $\mu_n^1$; dimension $n+1$}\\ \hline
$\omega^{1,1}_{1}$$^{\ast}$ & $[e_{n-2},e_{1}]=e_{n+1}$,\ $[e_{n},e_{n-1}]=e_{n+1}$\\ \hline
$\omega^{1,1}_{2}$ & $[e_{n},e_{n-1}]=e_{n+1}$\\ \hline
$\omega^{1,1}_{3}$ & $[e_{n-1},e_{1}]=e_{n+1}$,\ $[e_{n},e_{n-1}]=e_{n+1}$\\ \hline
$\omega^{1,1}_{4}$ & $[e_{n-2},e_{1}]=e_{n+1}$\\ \hline
$\omega^{1,1}_{5}$ & $[e_{n-1},e_{n-1}]=e_{n+1}$\\ \hline
$\omega^{1,1}_{6}$ & $[e_{n},e_{1}]=e_{n+1}$,\ $[e_{2},e_{n-1}]=e_{n+1}$\\ \hline
$\omega^{1,1}_{7}$ & $[e_{n-2},e_{1}]=e_{n+1}$,\ $[e_{n-1},e_{n-1}]=e_{n+1}$\\ \hline
$\omega^{1,1}_{8}$ & $[e_{n-2},e_{1}]=e_{n+1}$,\ $[e_{n},e_{1}]=e_{n+1}$,\ $[e_{2},e_{n-1}]=e_{n+1}$\\ \hline
$\omega^{1,1}_{9}$ & $[e_{n},e_{1}]=e_{n+1}$,\ $[e_{2},e_{n-1}]=e_{n+1}$,\ $[e_{n-1},e_{n-1}]=e_{n+1}$\\ \hline
$\omega^{1,1}_{10}$ & $[e_{n-2},e_{1}]=e_{n+1}$,\ $[e_{n},e_{1}]=e_{n+1}$,\ $[e_{2},e_{n-1}]=e_{n+1}$,\ $[e_{n-1},e_{n-1}]=e_{n+1}$ ($n\neq 5$)\\ \hline
$\omega^{1,1}_{10}(\alpha)$ & $[e_{n-2},e_{1}]=\alpha e_{n+1}$,\ $[e_{n},e_{1}]=e_{n+1}$,\ $[e_{2},e_{n-1}]=e_{n+1}$,\ $[e_{n-1},e_{n-1}]=e_{n+1}$ ($n=5,\ \alpha\neq0$)\\ \hline
$\omega^{1,1}_{11}$ & $[e_{n-1},e_{1}]=e_{n+1}$\\ \hline
\end{longtable}
\endgroup

\begingroup\footnotesize
\begin{longtable}{|p{2.0cm}|p{14.0cm}|}
\hline
\multicolumn{2}{|c|}{Central extensions of $\mu_n^1$; dimension $n+2$}\\ \hline
$\omega^{1,2}_{1}(\alpha)$$^{\ast}$ & $[e_{n-2},e_{1}]=\alpha e_{n+2}$,\ $[e_{n-1},e_{1}]=e_{n+1}$,\ $[e_{n},e_{1}]=e_{n+2}$,\ $[e_{2},e_{n-1}]=e_{n+2}$,\ $[e_{n-1},e_{n-1}]=e_{n+2}$,\ $[e_{n},e_{n-1}]=e_{n+1}$ ($\alpha\neq0;\ \alpha\sim-\alpha \text{ if } n \text{ is even}$)\\ \hline
$\omega^{1,2}_{2}$$^{\ast}$ & $[e_{n-2},e_{1}]=e_{n+2}$,\ $[e_{n},e_{1}]=e_{n+2}$,\ $[e_{2},e_{n-1}]=e_{n+2}$,\ $[e_{n-1},e_{n-1}]=e_{n+2}$,\ $[e_{n},e_{n-1}]=e_{n+1}$ ($n\neq 5$)\\ \hline
$\omega^{1,2}_{2}(\alpha)$$^{\ast}$ & $[e_{n-2},e_{1}]=\alpha e_{n+2}$,\ $[e_{n},e_{1}]=e_{n+2}$,\ $[e_{2},e_{n-1}]=e_{n+2}$,\ $[e_{n-1},e_{n-1}]=e_{n+2}$,\ $[e_{n},e_{n-1}]=e_{n+1}$ ($n=5,\ \alpha\neq0$)\\ \hline
$\omega^{1,2}_{3}$ & $[e_{n},e_{1}]=e_{n+2}$,\ $[e_{2},e_{n-1}]=e_{n+2}$,\ $[e_{n-1},e_{n-1}]=e_{n+2}$,\ $[e_{n},e_{n-1}]=e_{n+1}$\\ \hline
$\omega^{1,2}_{4}$ & $[e_{n-1},e_{1}]=e_{n+1}$,\ $[e_{n},e_{1}]=e_{n+2}$,\ $[e_{2},e_{n-1}]=e_{n+2}$,\ $[e_{n-1},e_{n-1}]=e_{n+2}$,\ $[e_{n},e_{n-1}]=e_{n+1}$\\ \hline
$\omega^{1,2}_{5}$$^{\ast}$ & $[e_{n-2},e_{1}]=e_{n+1}$,\ $[e_{n},e_{1}]=e_{n+2}$,\ $[e_{2},e_{n-1}]=e_{n+2}$,\ $[e_{n-1},e_{n-1}]=e_{n+2}$,\ $[e_{n},e_{n-1}]=e_{n+1}$ ($n\neq 6$)\\ \hline
$\omega^{1,2}_{5}(\alpha)$$^{\ast}$ & $[e_{n-2},e_{1}]=\alpha e_{n+1}$,\ $[e_{n},e_{1}]=e_{n+2}$,\ $[e_{2},e_{n-1}]=e_{n+2}$,\ $[e_{n-1},e_{n-1}]=e_{n+2}$,\ $[e_{n},e_{n-1}]=e_{n+1}$ ($n=6,\ \alpha\neq0$)\\
\hline
\end{longtable}
\endgroup

\begingroup\footnotesize
\begin{longtable}{|p{2.0cm}|p{14.0cm}|}
\hline
\multicolumn{2}{|c|}{Central extensions of $\mu_n^1$; dimension $n+3$}\\ \hline
$\omega^{1,3}_{1}$$^{\ast}$ & $[e_{n-2},e_{1}]=e_{n+3}$,\ $[e_{n-1},e_{1}]=e_{n+1}$,\ $[e_{n},e_{n-1}]=e_{n+2}$\\ \hline
$\omega^{1,3}_{2}$$^{\ast}$ & $[e_{n-2},e_{1}]=e_{n+3}$,\ $[e_{n-1},e_{1}]=e_{n+1}$,\ $[e_{n-1},e_{n-1}]=e_{n+3}$,\ $[e_{n},e_{n-1}]=e_{n+2}$\\ \hline
$\omega^{1,3}_{3}$$^{\ast}$ & $[e_{n-2},e_{1}]=e_{n+3}$,\ $[e_{n-1},e_{1}]=e_{n+1}$,\ $[e_{n},e_{1}]=e_{n+3}$,\ $[e_{2},e_{n-1}]=e_{n+3}$,\ $[e_{n},e_{n-1}]=e_{n+2}$\\ \hline
$\omega^{1,3}_{4}$$^{\ast}$ & $[e_{n-2},e_{1}]=e_{n+3}$,\ $[e_{n-1},e_{1}]=e_{n+1}$,\ $[e_{n},e_{1}]=e_{n+3}$,\ $[e_{2},e_{n-1}]=e_{n+3}$,\ $[e_{n-1},e_{n-1}]=e_{n+3}$,\ $[e_{n},e_{n-1}]=e_{n+2}$ ($n\neq 5$)\\ \hline
$\omega^{1,3}_{4}(\alpha)$$^{\ast}$ & $[e_{n-2},e_{1}]=e_{n+3}$,\ $[e_{n-1},e_{1}]=e_{n+1}$,\ $[e_{n},e_{1}]=e_{n+3}$,\ $[e_{2},e_{n-1}]=e_{n+3}$,\ $[e_{n-1},e_{n-1}]=\alpha e_{n+3}$,\ $[e_{n},e_{n-1}]=e_{n+2}$ ($n=5,\ \alpha\neq0$)\\ \hline
$\omega^{1,3}_{5}$ & $[e_{n-1},e_{1}]=e_{n+1}$,\ $[e_{n-1},e_{n-1}]=e_{n+2}$,\ $[e_{n},e_{n-1}]=e_{n+3}$\\ \hline
$\omega^{1,3}_{6}$$^{\ast}$ & $[e_{n-2},e_{1}]=e_{n+3}$,\ $[e_{n-1},e_{1}]=e_{n+1}$,\ $[e_{n-1},e_{n-1}]=e_{n+2}$,\ $[e_{n},e_{n-1}]=e_{n+3}$\\
\hline
\end{longtable}
\endgroup

\begingroup\footnotesize
\begin{longtable}{|p{2.0cm}|p{14.0cm}|}
\hline
\multicolumn{2}{|c|}{Central extensions of $\mu_n^1$; dimension $n+4$}\\ \hline
$\omega^{1,4}_{1}$$^{\ast}$ & $[e_{n-2},e_{1}]=e_{n+1}$,\ $[e_{n-1},e_{1}]=e_{n+2}$,\ $[e_{n-1},e_{n-1}]=e_{n+3}$,\ $[e_{n},e_{n-1}]=e_{n+4}$\\ \hline
$\omega^{1,4}_{2}$$^{\ast}$ & $[e_{n-2},e_{1}]=e_{n+1}$,\ $[e_{n-1},e_{1}]=e_{n+2}$,\ $[e_{n},e_{1}]=e_{n+1}$,\ $[e_{2},e_{n-1}]=e_{n+1}$,\ $[e_{n-1},e_{n-1}]=e_{n+3}$,\ $[e_{n},e_{n-1}]=e_{n+4}$\\ \hline
$\omega^{1,4}_{3}$$^{\ast}$ & $[e_{n-2},e_{1}]=e_{n+1}$,\ $[e_{n-1},e_{1}]=e_{n+2}$,\ $[e_{n},e_{1}]=e_{n+3}$,\ $[e_{2},e_{n-1}]=e_{n+3}$,\ $[e_{n-1},e_{n-1}]=e_{n+3}$,\ $[e_{n},e_{n-1}]=e_{n+4}$\\ \hline
$\omega^{1,4}_{4}$$^{\ast}$ & $[e_{n-2},e_{1}]=e_{n+1}$,\ $[e_{n-1},e_{1}]=e_{n+2}$,\ $[e_{n},e_{1}]=e_{n+1}+e_{n+3}$,\ $[e_{2},e_{n-1}]=e_{n+1}+e_{n+3}$,\ $[e_{n-1},e_{n-1}]=e_{n+3}$,\ $[e_{n},e_{n-1}]=e_{n+4}$ ($n\neq 5$)\\ \hline
$\omega^{1,4}_{4}(\alpha)$$^{\ast}$ & $[e_{n-2},e_{1}]=e_{n+1}$,\ $[e_{n-1},e_{1}]=e_{n+2}$,\ $[e_{n},e_{1}]=e_{n+1}+\alpha e_{n+3}$,\ $[e_{2},e_{n-1}]=e_{n+1}+\alpha e_{n+3}$,\ $[e_{n-1},e_{n-1}]=e_{n+3}$,\ $[e_{n},e_{n-1}]=e_{n+4}$ ($n=5,\ \alpha\neq0$)\\ \hline
$\omega^{1,4}_{5}$$^{\ast}$ & $[e_{n-2},e_{1}]=e_{n+1}$,\ $[e_{n-1},e_{1}]=e_{n+2}$,\ $[e_{n},e_{1}]=e_{n+3}$,\ $[e_{2},e_{n-1}]=e_{n+3}$,\ $[e_{n},e_{n-1}]=e_{n+4}$\\ \hline
$\omega^{1,4}_{6}$$^{\ast}$ & $[e_{n-2},e_{1}]=e_{n+1}$,\ $[e_{n-1},e_{1}]=e_{n+2}$,\ $[e_{n},e_{1}]=e_{n+3}$,\ $[e_{2},e_{n-1}]=e_{n+3}$,\ $[e_{n-1},e_{n-1}]=e_{n+1}$,\ $[e_{n},e_{n-1}]=e_{n+4}$\\
\hline
\end{longtable}
\endgroup

\begingroup\footnotesize
\begin{longtable}{|p{2.0cm}|p{14.0cm}|}
\hline
\multicolumn{2}{|c|}{Central extensions of $\mu_n^1$; dimension $n+5$}\\ \hline
$\omega^{1,5}_{1}$$^{\ast}$ & $[e_{n-2},e_{1}]=e_{n+1}$,\ $[e_{n-1},e_{1}]=e_{n+2}$,\ $[e_{n},e_{1}]=e_{n+4}$,\ $[e_{2},e_{n-1}]=e_{n+4}$,\ $[e_{n-1},e_{n-1}]=e_{n+3}$,\ $[e_{n},e_{n-1}]=e_{n+5}$\\ \hline
\end{longtable}
\endgroup

\begingroup\footnotesize
\begin{longtable}{|p{2.0cm}|p{14.0cm}|}
\hline
\multicolumn{2}{|c|}{Central extensions of $\mu_n^2$; dimension $n+1$}\\ \hline
$\omega^{2,1}_{1}$ & $[e_{n-1},e_{1}]=e_{n+1}$\\ \hline
$\omega^{2,1}_{2}$ & $[e_{n-1},e_{n-1}]=e_{n+1}$\\ \hline
$\omega^{2,1}_{3}$ & $[e_{n-1},e_{1}]=e_{n+1}$,\ $[e_{n-1},e_{n-1}]=e_{n+1}$\\ \hline
$\omega^{2,1}_{4}$ & $[e_{n-2},e_{1}]=e_{n+1}$,\ $[e_{n-2},e_{n-1}]=e_{n+1}$\\ \hline
$\omega^{2,1}_{5}$ & $[e_{n-2},e_{1}]=e_{n+1}$,\ $[e_{n-1},e_{1}]=e_{n+1}$,\ $[e_{n-2},e_{n-1}]=e_{n+1}$\\ \hline
$\omega^{2,1}_{6}$ & $[e_{2},e_{1}]=e_3-e_{n+1}$,\ $[e_{n},e_{1}]=e_{n+1}$\\ \hline
$\omega^{2,1}_{7}$ & $[e_{2},e_{1}]=e_3-e_{n+1}$,\ $[e_{n},e_{1}]=e_{n+1}$,\ $[e_{n-1},e_{n-1}]=e_{n+1}$\\ \hline
$\omega^{2,1}_{8}$$^{\ast}$ & $[e_{2},e_{1}]=e_3-e_{n+1}$,\ $[e_{n-2},e_{1}]=e_{n+1}$,\ $[e_{n},e_{1}]=e_{n+1}$,\ $[e_{n-2},e_{n-1}]=e_{n+1}$\\ \hline
$\omega^{2,1}_{9}$ & $[e_{2},e_{1}]=e_3-e_{n+1}$,\ $[e_{n},e_{1}]=e_{n+1}$,\ $[e_{n},e_{n-1}]=e_{n+1}$\\ \hline
$\omega^{2,1}_{10}$ & $[e_{2},e_{1}]=e_3-e_{n+1}$,\ $[e_{n-1},e_{1}]=e_{n+1}$,\ $[e_{n},e_{1}]=e_{n+1}$,\ $[e_{n},e_{n-1}]=e_{n+1}$\\ \hline
$\omega^{2,1}_{11}$ & $[e_{n},e_{n-1}]=e_{n+1}$\\ \hline
$\omega^{2,1}_{12}$ & $[e_{n-1},e_{1}]=e_{n+1}$,\ $[e_{n},e_{n-1}]=e_{n+1}$\\ \hline
$\omega^{2,1}_{13}$ & $[e_{2},e_{1}]=e_3-e_{n+1}$,\ $[e_{n-2},e_{1}]=e_{n+1}$,\ $[e_{n},e_{1}]=e_{n+1}$,\ $[e_{n-2},e_{n-1}]=e_{n+1}$,\ $[e_{n},e_{n-1}]=e_{n+1}$\\ \hline
$\omega^{2,1}_{14}$ & $[e_{2},e_{1}]=e_3-e_{n+1}$,\ $[e_{n-2},e_{1}]=e_{n+1}$,\ $[e_{n-1},e_{1}]=e_{n+1}$,\ $[e_{n},e_{1}]=e_{n+1}$,\ $[e_{n-2},e_{n-1}]=e_{n+1}$,\ $[e_{n},e_{n-1}]=e_{n+1}$\\ \hline
$\omega^{2,1}_{15}$$^{\ast}$ & $[e_{n-2},e_{1}]=e_{n+1}$,\ $[e_{n-2},e_{n-1}]=e_{n+1}$,\ $[e_{n},e_{n-1}]=e_{n+1}$\\ \hline
\end{longtable}
\endgroup

\begingroup\footnotesize
\begin{longtable}{|p{2.0cm}|p{14.0cm}|}
\hline
\multicolumn{2}{|c|}{Central extensions of $\mu_n^2$; dimension $n+2$}\\ \hline
$\omega^{2,2}_{1}$ & $[e_{n-1},e_{1}]=e_{n+1}+e_{n+2}$,\ $[e_{n-1},e_{n-1}]=e_{n+2}$\\ \hline
$\omega^{2,2}_{2}$ & $[e_{2},e_{1}]=e_3-e_{n+2}$,\ $[e_{n-1},e_{1}]=e_{n+1}$,\ $[e_{n},e_{1}]=e_{n+2}$,\ $[e_{n-1},e_{n-1}]=e_{n+1}$\\ \hline
$\omega^{2,2}_{3}$$^{\ast}$ & $[e_{2},e_{1}]=e_3-e_{n+2}$,\ $[e_{n-2},e_{1}]=e_{n+2}$,\ $[e_{n-1},e_{1}]=e_{n+1}$,\ $[e_{n},e_{1}]=e_{n+2}$,\ $[e_{n-2},e_{n-1}]=e_{n+2}$,\ $[e_{n-1},e_{n-1}]=e_{n+1}$\\ \hline
$\omega^{2,2}_{4}$ & $[e_{2},e_{1}]=e_3-2e_{n+2}$,\ $[e_{n-1},e_{1}]=e_{n+1}$,\ $[e_{n},e_{1}]=2e_{n+2}$,\ $[e_{n-1},e_{n-1}]=e_{n+1}$,\ $[e_{n},e_{n-1}]=e_{n+2}$\\ \hline
$\omega^{2,2}_{5}$$^{\ast}$ & $[e_{2},e_{1}]=e_3-2e_{n+2}$,\ $[e_{n-2},e_{1}]=e_{n+2}$,\ $[e_{n-1},e_{1}]=e_{n+1}$,\ $[e_{n},e_{1}]=2e_{n+2}$,\ $[e_{n-2},e_{n-1}]=e_{n+2}$,\ $[e_{n-1},e_{n-1}]=e_{n+1}$,\ $[e_{n},e_{n-1}]=e_{n+2}$\\ \hline
$\omega^{2,2}_{6}$ & $[e_{n-2},e_{1}]=e_{n+2}$,\ $[e_{n-1},e_{1}]=e_{n+1}$,\ $[e_{n-2},e_{n-1}]=e_{n+2}$,\ $[e_{n-1},e_{n-1}]=e_{n+1}$\\ \hline
$\omega^{2,2}_{7}$ & $[e_{n-2},e_{1}]=e_{n+2}$,\ $[e_{n-1},e_{1}]=e_{n+1}+e_{n+2}$,\ $[e_{n-2},e_{n-1}]=e_{n+2}$,\ $[e_{n-1},e_{n-1}]=e_{n+1}$\\
\hline
\end{longtable}
\endgroup

\begingroup\footnotesize
\begin{longtable}{|p{2.0cm}|p{14.0cm}|}
\hline
\multicolumn{2}{|c|}{Central extensions of $\mu_n^2$; dimension $n+3$}\\ \hline
$\omega^{2,3}_{1}$ & $[e_{n-2},e_{1}]=e_{n+3}$,\ $[e_{n-1},e_{1}]=e_{n+1}+e_{n+2}$,\ $[e_{n-2},e_{n-1}]=e_{n+3}$,\ $[e_{n-1},e_{n-1}]=e_{n+2}$\\ \hline
$\omega^{2,3}_{2}$ & $[e_{2},e_{1}]=e_3-e_{n+3}$,\ $[e_{n-1},e_{1}]=e_{n+1}+e_{n+2}$,\ $[e_{n},e_{1}]=e_{n+3}$,\ $[e_{n-1},e_{n-1}]=e_{n+2}$\\ \hline
$\omega^{2,3}_{3}$ & $[e_{2},e_{1}]=e_3-e_{n+3}$,\ $[e_{n-1},e_{1}]=e_{n+1}+e_{n+2}$,\ $[e_{n},e_{1}]=e_{n+3}$,\ $[e_{n-1},e_{n-1}]=e_{n+2}$,\ $[e_{n},e_{n-1}]=e_{n+3}$\\ \hline
$\omega^{2,3}_{4}$$^{\ast}$ & $[e_{2},e_{1}]=e_3-e_{n+3}$,\ $[e_{n-2},e_{1}]=e_{n+3}$,\ $[e_{n-1},e_{1}]=e_{n+1}+e_{n+2}$,\ $[e_{n},e_{1}]=e_{n+3}$,\ $[e_{n-2},e_{n-1}]=e_{n+3}$,\ $[e_{n-1},e_{n-1}]=e_{n+2}$\\ \hline
$\omega^{2,3}_{5}$ & $[e_{2},e_{1}]=e_3-e_{n+3}$,\ $[e_{n-2},e_{1}]=e_{n+3}$,\ $[e_{n-1},e_{1}]=e_{n+1}+e_{n+2}$,\ $[e_{n},e_{1}]=e_{n+3}$,\ $[e_{n-2},e_{n-1}]=e_{n+3}$,\ $[e_{n-1},e_{n-1}]=e_{n+2}$,\ $[e_{n},e_{n-1}]=e_{n+3}$\\ \hline
$\omega^{2,3}_{6}$ & $[e_{2},e_{1}]=e_3-2e_{n+3}$,\ $[e_{n-1},e_{1}]=e_{n+1}+e_{n+2}$,\ $[e_{n},e_{1}]=2e_{n+3}$,\ $[e_{n-1},e_{n-1}]=e_{n+2}$,\ $[e_{n},e_{n-1}]=e_{n+3}$\\ \hline
$\omega^{2,3}_{7}$$^{\ast}$ & $[e_{2},e_{1}]=e_3-2e_{n+3}$,\ $[e_{n-2},e_{1}]=e_{n+3}$,\ $[e_{n-1},e_{1}]=e_{n+1}+e_{n+2}$,\ $[e_{n},e_{1}]=2e_{n+3}$,\ $[e_{n-2},e_{n-1}]=e_{n+3}$,\ $[e_{n-1},e_{n-1}]=e_{n+2}$,\ $[e_{n},e_{n-1}]=e_{n+3}$\\
\hline
\end{longtable}
\endgroup

\begingroup\footnotesize
\begin{longtable}{|p{2.0cm}|p{14.0cm}|}
\hline
\multicolumn{2}{|c|}{Central extensions of $\mu_n^2$; dimension $n+4$}\\ \hline
$\omega^{2,4}_{1}$ & $[e_{2},e_{1}]=e_3-e_{n+3}-e_{n+4}$,\ $[e_{n-1},e_{1}]=e_{n+1}+e_{n+2}$,\ $[e_{n},e_{1}]=e_{n+3}+e_{n+4}$,\ $[e_{n-1},e_{n-1}]=e_{n+2}$,\ $[e_{n},e_{n-1}]=e_{n+4}$\\ \hline
$\omega^{2,4}_{2}$$^{\ast}$ & $[e_{2},e_{1}]=e_3-e_{n+4}$,\ $[e_{n-2},e_{1}]=e_{n+3}$,\ $[e_{n-1},e_{1}]=e_{n+1}+e_{n+2}$,\ $[e_{n},e_{1}]=e_{n+4}$,\ $[e_{n-2},e_{n-1}]=e_{n+3}$,\ $[e_{n-1},e_{n-1}]=e_{n+2}$,\ $[e_{n},e_{n-1}]=e_{n+4}$\\ \hline
$\omega^{2,4}_{3}$$^{\ast}$ & $[e_{2},e_{1}]=e_3-e_{n+4}$,\ $[e_{n-2},e_{1}]=e_{n+3}$,\ $[e_{n-1},e_{1}]=e_{n+1}+e_{n+2}$,\ $[e_{n},e_{1}]=e_{n+4}$,\ $[e_{n-2},e_{n-1}]=e_{n+3}$,\ $[e_{n-1},e_{n-1}]=e_{n+2}$\\ \hline
$\omega^{2,4}_{4}$$^{\ast}$ & $[e_{2},e_{1}]=e_3-e_{n+3}-e_{n+4}$,\ $[e_{n-2},e_{1}]=e_{n+3}$,\ $[e_{n-1},e_{1}]=e_{n+1}+e_{n+2}$,\ $[e_{n},e_{1}]=e_{n+3}+e_{n+4}$,\ $[e_{n-2},e_{n-1}]=e_{n+3}$,\ $[e_{n-1},e_{n-1}]=e_{n+2}$,\ $[e_{n},e_{n-1}]=e_{n+4}$\\ \hline
$\omega^{2,4}_{5}$$^{\ast}$ & $[e_{2},e_{1}]=e_3-e_{n+3}-e_{n+4}$,\ $[e_{n-2},e_{1}]=e_{n+3}$,\ $[e_{n-1},e_{1}]=e_{n+1}+e_{n+2}$,\ $[e_{n},e_{1}]=e_{n+3}+e_{n+4}$,\ $[e_{n-2},e_{n-1}]=e_{n+3}$,\ $[e_{n-1},e_{n-1}]=e_{n+2}$,\ $[e_{n},e_{n-1}]=e_{n+3}$\\ \hline
$\omega^{2,4}_{6}$$^{\ast}$ & $[e_{2},e_{1}]=e_3-2e_{n+4}$,\ $[e_{n-2},e_{1}]=e_{n+3}$,\ $[e_{n-1},e_{1}]=e_{n+1}+e_{n+2}$,\ $[e_{n},e_{1}]=2e_{n+4}$,\ $[e_{n-2},e_{n-1}]=e_{n+3}$,\ $[e_{n-1},e_{n-1}]=e_{n+2}$,\ $[e_{n},e_{n-1}]=e_{n+4}$\\ \hline
$\omega^{2,4}_{7}$$^{\ast}$ & $[e_{2},e_{1}]=e_3-e_{n+3}-2e_{n+4}$,\ $[e_{n-2},e_{1}]=e_{n+3}$,\ $[e_{n-1},e_{1}]=e_{n+1}+e_{n+2}$,\ $[e_{n},e_{1}]=e_{n+3}+2e_{n+4}$,\ $[e_{n-2},e_{n-1}]=e_{n+3}$,\ $[e_{n-1},e_{n-1}]=e_{n+2}$,\ $[e_{n},e_{n-1}]=e_{n+3}+e_{n+4}$\\
\hline
\end{longtable}
\endgroup

\begingroup\footnotesize
\begin{longtable}{|p{2.0cm}|p{14.0cm}|}
\hline
\multicolumn{2}{|c|}{Central extensions of $\mu_n^2$; dimension $n+5$}\\ \hline
$\omega^{2,5}_{1}$$^{\ast}$ & $[e_{2},e_{1}]=e_3-e_{n+4}-e_{n+5}$,\ $[e_{n-2},e_{1}]=e_{n+1}$,\ $[e_{n-1},e_{1}]=e_{n+2}+e_{n+3}$,\ $[e_{n},e_{1}]=e_{n+4}+e_{n+5}$,\ $[e_{n-2},e_{n-1}]=e_{n+1}$,\ $[e_{n-1},e_{n-1}]=e_{n+3}$,\ $[e_{n},e_{n-1}]=e_{n+5}$\\ \hline
\end{longtable}
\endgroup

\begingroup\footnotesize
\begin{longtable}{|p{2.0cm}|p{14.0cm}|}
\hline
\multicolumn{2}{|c|}{Central extensions of $\mu_5^3$; dimension $6$}\\ \hline
$\omega^{3,1}_{1}$ & $[e_{5},e_{1}]=e_{6}$,\ $[e_{2},e_{4}]=ie_3+e_{6}$,\ $[e_{5},e_{4}]=e_3+3i e_{6}$\\ \hline
$\omega^{3,1}_{2}$ & $[e_{4},e_{1}]=e_{6}$,\ $[e_{5},e_{1}]=e_{6}$,\ $[e_{2},e_{4}]=ie_3+e_{6}$,\ $[e_{5},e_{4}]=e_3+3i e_{6}$\\ \hline
$\omega^{3,1}_{3}$ & $[e_{4},e_{1}]=-i e_{6}$,\ $[e_{5},e_{1}]=e_{6}$,\ $[e_{2},e_{4}]=ie_3+e_{6}$,\ $[e_{4},e_{4}]=2e_{6}$,\ $[e_{5},e_{4}]=e_3+3i e_{6}$\\ \hline
$\omega^{3,1}_{4}$ & $[e_{2},e_{1}]=e_3+i e_{6}$,\ $[e_{5},e_{1}]=2e_{6}$,\ $[e_{2},e_{4}]=ie_3+e_{6}$,\ $[e_{5},e_{4}]=e_3+3i e_{6}$\\ \hline
$\omega^{3,1}_{5}$ & $[e_{4},e_{1}]=(1-i)e_{6}$,\ $[e_{5},e_{1}]=e_{6}$,\ $[e_{2},e_{4}]=ie_3+e_{6}$,\ $[e_{4},e_{4}]=2e_{6}$,\ $[e_{5},e_{4}]=e_3+3i e_{6}$\\ \hline
$\omega^{3,1}_{6}$ & $[e_{2},e_{1}]=e_3+i e_{6}$,\ $[e_{4},e_{1}]=e_{6}$,\ $[e_{5},e_{1}]=2e_{6}$,\ $[e_{2},e_{4}]=ie_3+e_{6}$,\ $[e_{5},e_{4}]=e_3+3i e_{6}$\\ \hline
$\omega^{3,1}_{7}$ & $[e_{2},e_{1}]=e_3+i e_{6}$,\ $[e_{4},e_{1}]=-i e_{6}$,\ $[e_{5},e_{1}]=2e_{6}$,\ $[e_{2},e_{4}]=ie_3+e_{6}$,\ $[e_{4},e_{4}]=2e_{6}$,\ $[e_{5},e_{4}]=e_3+3i e_{6}$\\
\hline
\end{longtable}
\endgroup

\begingroup\footnotesize
\begin{longtable}{|p{2.0cm}|p{14.0cm}|}
\hline
\multicolumn{2}{|c|}{Central extensions of $\mu_5^3$; dimension $7$}\\ \hline
$\omega^{3,2}_{1}$ & $[e_{4},e_{1}]=e_{7}$,\ $[e_{5},e_{1}]=e_{6}$,\ $[e_{2},e_{4}]=ie_3+e_{6}$,\ $[e_{5},e_{4}]=e_3+3i e_{6}$\\ \hline
$\omega^{3,2}_{2}$ & $[e_{4},e_{1}]=-i e_{6}+e_{7}$,\ $[e_{5},e_{1}]=e_{6}$,\ $[e_{2},e_{4}]=ie_3+e_{6}$,\ $[e_{4},e_{4}]=2e_{6}$,\ $[e_{5},e_{4}]=e_3+3i e_{6}$\\ \hline
$\omega^{3,2}_{3}$ & $[e_{2},e_{1}]=e_3+i e_{6}$,\ $[e_{4},e_{1}]=e_{7}$,\ $[e_{5},e_{1}]=2e_{6}$,\ $[e_{2},e_{4}]=ie_3+e_{6}$,\ $[e_{5},e_{4}]=e_3+3i e_{6}$\\ \hline
$\omega^{3,2}_{4}$ & $[e_{4},e_{1}]=(1-i)e_{7}$,\ $[e_{5},e_{1}]=e_{6}$,\ $[e_{2},e_{4}]=ie_3+e_{6}$,\ $[e_{4},e_{4}]=2e_{7}$,\ $[e_{5},e_{4}]=e_3+3i e_{6}$\\ \hline
$\omega^{3,2}_{5}$ & $[e_{2},e_{1}]=e_3+i e_{7}$,\ $[e_{4},e_{1}]=e_{7}$,\ $[e_{5},e_{1}]=e_{6}+e_{7}$,\ $[e_{2},e_{4}]=ie_3+e_{6}$,\ $[e_{5},e_{4}]=e_3+3i e_{6}$\\ \hline
$\omega^{3,2}_{6}$ & $[e_{2},e_{1}]=e_3+i e_{6}$,\ $[e_{4},e_{1}]=-i e_{6}+e_{7}$,\ $[e_{5},e_{1}]=2e_{6}$,\ $[e_{2},e_{4}]=ie_3+e_{6}$,\ $[e_{4},e_{4}]=2e_{6}$,\ $[e_{5},e_{4}]=e_3+3i e_{6}$\\ \hline
$\omega^{3,2}_{7}$ & $[e_{4},e_{1}]=-i e_{6}+(1-i)e_{7}$,\ $[e_{5},e_{1}]=e_{6}$,\ $[e_{2},e_{4}]=ie_3+e_{6}$,\ $[e_{4},e_{4}]=2e_{6}+2e_{7}$,\ $[e_{5},e_{4}]=e_3+3i e_{6}$\\
\hline
\end{longtable}
\endgroup

\begingroup\footnotesize
\begin{longtable}{|p{2.0cm}|p{14.0cm}|}
\hline
\multicolumn{2}{|c|}{Central extensions of $\mu_5^3$; dimension $8$}\\ \hline
$\omega^{3,3}_{1}$ & $[e_{4},e_{1}]=e_{7}-i e_{8}$,\ $[e_{5},e_{1}]=e_{6}$,\ $[e_{2},e_{4}]=ie_3+e_{6}$,\ $[e_{4},e_{4}]=2e_{8}$,\ $[e_{5},e_{4}]=e_3+3i e_{6}$\\ \hline
$\omega^{3,3}_{2}$ & $[e_{2},e_{1}]=e_3+i e_{6}$,\ $[e_{4},e_{1}]=e_{7}-i e_{8}$,\ $[e_{5},e_{1}]=2e_{6}$,\ $[e_{2},e_{4}]=ie_3+e_{6}$,\ $[e_{4},e_{4}]=2e_{8}$,\ $[e_{5},e_{4}]=e_3+3i e_{6}$\\ \hline
$\omega^{3,3}_{3}$ & $[e_{2},e_{1}]=e_3+i e_{7}$,\ $[e_{4},e_{1}]=e_{7}-i e_{8}$,\ $[e_{5},e_{1}]=e_{6}+e_{7}$,\ $[e_{2},e_{4}]=ie_3+e_{6}$,\ $[e_{4},e_{4}]=2e_{8}$,\ $[e_{5},e_{4}]=e_3+3i e_{6}$\\ \hline
$\omega^{3,3}_{4}$ & $[e_{2},e_{1}]=e_3+i e_{8}$,\ $[e_{4},e_{1}]=e_{7}-i e_{8}$,\ $[e_{5},e_{1}]=e_{6}+e_{8}$,\ $[e_{2},e_{4}]=ie_3+e_{6}$,\ $[e_{4},e_{4}]=2e_{8}$,\ $[e_{5},e_{4}]=e_3+3i e_{6}$\\ \hline
$\omega^{3,3}_{5}$ & $[e_{2},e_{1}]=e_3+i e_{6}+i e_{7}$,\ $[e_{4},e_{1}]=e_{7}-i e_{8}$,\ $[e_{5},e_{1}]=2e_{6}+e_{7}$,\ $[e_{2},e_{4}]=ie_3+e_{6}$,\ $[e_{4},e_{4}]=2e_{8}$,\ $[e_{5},e_{4}]=e_3+3i e_{6}$\\ \hline
$\omega^{3,3}_{6}$ & $[e_{2},e_{1}]=e_3+i e_{6}+i e_{8}$,\ $[e_{4},e_{1}]=e_{7}-i e_{8}$,\ $[e_{5},e_{1}]=2e_{6}+e_{8}$,\ $[e_{2},e_{4}]=ie_3+e_{6}$,\ $[e_{4},e_{4}]=2e_{8}$,\ $[e_{5},e_{4}]=e_3+3i e_{6}$\\ \hline
$\omega^{3,3}_{7}$ & $[e_{2},e_{1}]=e_3+i e_{7}+i e_{8}$,\ $[e_{4},e_{1}]=e_{7}-i e_{8}$,\ $[e_{5},e_{1}]=e_{6}+e_{7}+e_{8}$,\ $[e_{2},e_{4}]=ie_3+e_{6}$,\ $[e_{4},e_{4}]=2e_{8}$,\ $[e_{5},e_{4}]=e_3+3i e_{6}$\\
\hline
\end{longtable}
\endgroup

\begingroup\footnotesize
\begin{longtable}{|p{2.0cm}|p{14.0cm}|}
\hline
\multicolumn{2}{|c|}{Central extensions of $\mu_5^3$; dimension $9$}\\ \hline
$\omega^{3,4}_{1}$ & $[e_{2},e_{1}]=e_3+i e_{9}$,\ $[e_{4},e_{1}]=e_{7}-i e_{8}$,\ $[e_{5},e_{1}]=e_{6}+e_{9}$,\ $[e_{2},e_{4}]=ie_3+e_{6}$,\ $[e_{4},e_{4}]=2e_{8}$,\ $[e_{5},e_{4}]=e_3+3i e_{6}$\\ \hline
\end{longtable}
\endgroup

\begingroup\footnotesize
\begin{longtable}{|p{2.0cm}|p{14.0cm}|}
\hline
\multicolumn{2}{|c|}{Central extensions of $\mu_5^4$; dimension $6$}\\ \hline
$\omega^{4,1}_{1}$ & $[e_{2},e_{1}]=e_3+e_{6}$\\ \hline
$\omega^{4,1}_{2}$ & $[e_{2},e_{1}]=e_3+e_{6}$,\ $[e_{4},e_{1}]=e_{6}$\\ \hline
$\omega^{4,1}_{3}$ & $[e_{2},e_{1}]=e_3+e_{6}$,\ $[e_{4},e_{4}]=e_{6}$\\ \hline
$\omega^{4,1}_{4}(\alpha)$ & $[e_{2},e_{1}]=e_3+e_{6}$,\ $[e_{5},e_{1}]=\alpha e_{6}$,\ $[e_{2},e_{4}]=\alpha e_{6}$ ($\alpha\neq0;\ \alpha\sim -\alpha$)\\ \hline
$\omega^{4,1}_{5}(\alpha)$ & $[e_{2},e_{1}]=e_3+e_{6}$,\ $[e_{4},e_{1}]=\alpha e_{6}$,\ $[e_{4},e_{4}]=e_{6}$ ($\alpha\neq0;\ \alpha\sim -\alpha$)\\ \hline
$\omega^{4,1}_{6}(\alpha)$ & $[e_{2},e_{1}]=e_3+e_{6}$,\ $[e_{4},e_{1}]=e_{6}$,\ $[e_{5},e_{1}]=\alpha e_{6}$,\ $[e_{2},e_{4}]=\alpha e_{6}$ ($\alpha\neq0;\ \alpha\sim -\alpha$)\\ \hline
$\omega^{4,1}_{7}(\alpha)$ & $[e_{2},e_{1}]=e_3+e_{6}$,\ $[e_{5},e_{1}]=\alpha e_{6}$,\ $[e_{2},e_{4}]=\alpha e_{6}$,\ $[e_{4},e_{4}]=e_{6}$ ($\alpha\neq0;\ \alpha\sim -\alpha$)\\ \hline
$\omega^{4,1}_{8}(\alpha,\beta)$ & $[e_{2},e_{1}]=e_3+e_{6}$,\ $[e_{4},e_{1}]=\alpha e_{6}$,\ $[e_{5},e_{1}]=\beta e_{6}$,\ $[e_{2},e_{4}]=\beta e_{6}$,\ $[e_{4},e_{4}]=e_{6}$ ($\alpha\neq0,\ \beta\neq0;\ (\alpha,\beta)\sim(-\alpha,-\beta)$)\\ \hline
$\omega^{4,1}_{9}$ & $[e_{4},e_{1}]=e_{6}$\\ \hline
$\omega^{4,1}_{10}(\alpha)$ & $[e_{4},e_{1}]=e_{6}$,\ $[e_{4},e_{4}]=\alpha e_{6}$ ($\alpha\neq0;\ \alpha\sim -\alpha$)\\ \hline
$\omega^{4,1}_{11}$ & $[e_{4},e_{1}]=e_{6}$,\ $[e_{5},e_{1}]=e_{6}$,\ $[e_{2},e_{4}]=e_{6}$\\ \hline
$\omega^{4,1}_{12}(\alpha)$ & $[e_{4},e_{1}]=e_{6}$,\ $[e_{5},e_{1}]=e_{6}$,\ $[e_{2},e_{4}]=e_{6}$,\ $[e_{4},e_{4}]=\alpha e_{6}$ ($\alpha\neq0;\ \alpha\sim -\alpha$)\\ \hline
$\omega^{4,1}_{13}$ & $[e_{4},e_{4}]=e_{6}$\\ \hline
$\omega^{4,1}_{14}$ & $[e_{5},e_{1}]=e_{6}$,\ $[e_{2},e_{4}]=e_{6}$,\ $[e_{4},e_{4}]=e_{6}$\\ \hline
$\omega^{4,1}_{15}$ & $[e_{5},e_{1}]=e_{6}$,\ $[e_{2},e_{4}]=e_{6}$\\ \hline
\end{longtable}
\endgroup

\begingroup\footnotesize
\begin{longtable}{|p{2.0cm}|p{14.0cm}|}
\hline
\multicolumn{2}{|c|}{Central extensions of $\mu_5^4$; dimension $7$}\\ \hline
$\omega^{4,2}_{1}$ & $[e_{2},e_{1}]=e_3+e_{6}$,\ $[e_{4},e_{1}]=e_{7}$\\ \hline
$\omega^{4,2}_{2}$ & $[e_{2},e_{1}]=e_3+e_{6}$,\ $[e_{4},e_{1}]=e_{7}$,\ $[e_{4},e_{4}]=e_{6}$\\ \hline
$\omega^{4,2}_{3}(\alpha)$ & $[e_{2},e_{1}]=e_3+e_{6}$,\ $[e_{4},e_{1}]=e_{7}$,\ $[e_{5},e_{1}]=\alpha e_{6}$,\ $[e_{2},e_{4}]=\alpha e_{6}$ ($\alpha\neq0;\ \alpha\sim -\alpha$)\\ \hline
$\omega^{4,2}_{4}(\alpha)$ & $[e_{2},e_{1}]=e_3+e_{6}$,\ $[e_{4},e_{1}]=e_{7}$,\ $[e_{4},e_{4}]=\alpha e_{7}$ ($\alpha\neq0;\ \alpha\sim -\alpha$)\\ \hline
$\omega^{4,2}_{5}$ & $[e_{2},e_{1}]=e_3+e_{6}$,\ $[e_{4},e_{1}]=e_{7}$,\ $[e_{5},e_{1}]=e_{7}$,\ $[e_{2},e_{4}]=e_{7}$\\ \hline
$\omega^{4,2}_{6}(\alpha)$ & $[e_{2},e_{1}]=e_3+e_{6}$,\ $[e_{4},e_{1}]=e_{7}$,\ $[e_{5},e_{1}]=\alpha e_{6}$,\ $[e_{2},e_{4}]=\alpha e_{6}$,\ $[e_{4},e_{4}]=e_{6}$ ($\alpha\neq0;\ \alpha\sim -\alpha$)\\ \hline
$\omega^{4,2}_{7}(\alpha)$ & $[e_{2},e_{1}]=e_3+e_{6}$,\ $[e_{4},e_{1}]=e_{7}$,\ $[e_{4},e_{4}]=e_{6}+\alpha e_{7}$ ($\alpha\neq0;\ \alpha\sim -\alpha$)\\
\hline
\end{longtable}
\endgroup

\begingroup\footnotesize
\begin{longtable}{|p{2.0cm}|p{14.0cm}|}
\hline
\multicolumn{2}{|c|}{Central extensions of $\mu_5^4$; dimension $8$}\\ \hline
$\omega^{4,3}_{1}$ & $[e_{2},e_{1}]=e_3+e_{6}$,\ $[e_{4},e_{1}]=e_{7}$,\ $[e_{4},e_{4}]=e_{8}$\\ \hline
$\omega^{4,3}_{2}(\alpha)$ & $[e_{2},e_{1}]=e_3+e_{6}$,\ $[e_{4},e_{1}]=e_{7}$,\ $[e_{5},e_{1}]=\alpha e_{6}$,\ $[e_{2},e_{4}]=\alpha e_{6}$,\ $[e_{4},e_{4}]=e_{8}$ ($\alpha\neq0;\ \alpha\sim -\alpha$)\\ \hline
$\omega^{4,3}_{3}$ & $[e_{2},e_{1}]=e_3+e_{6}$,\ $[e_{4},e_{1}]=e_{7}$,\ $[e_{5},e_{1}]=e_{7}$,\ $[e_{2},e_{4}]=e_{7}$,\ $[e_{4},e_{4}]=e_{8}$\\ \hline
$\omega^{4,3}_{4}$ & $[e_{2},e_{1}]=e_3+e_{6}$,\ $[e_{4},e_{1}]=e_{7}$,\ $[e_{5},e_{1}]=e_{8}$,\ $[e_{2},e_{4}]=e_{8}$,\ $[e_{4},e_{4}]=e_{8}$\\ \hline
$\omega^{4,3}_{5}(\alpha)$ & $[e_{2},e_{1}]=e_3+e_{6}$,\ $[e_{4},e_{1}]=e_{7}$,\ $[e_{5},e_{1}]=\alpha e_{6}+e_{7}$,\ $[e_{2},e_{4}]=\alpha e_{6}+e_{7}$,\ $[e_{4},e_{4}]=e_{8}$ ($\alpha\neq0;\ \alpha\sim -\alpha$)\\ \hline
$\omega^{4,3}_{6}(\alpha)$ & $[e_{2},e_{1}]=e_3+e_{6}$,\ $[e_{4},e_{1}]=e_{7}$,\ $[e_{5},e_{1}]=\alpha e_{6}+e_{8}$,\ $[e_{2},e_{4}]=\alpha e_{6}+e_{8}$,\ $[e_{4},e_{4}]=e_{8}$ ($\alpha\neq0;\ \alpha\sim -\alpha$)\\ \hline
$\omega^{4,3}_{7}(\alpha)$ & $[e_{2},e_{1}]=e_3+e_{6}$,\ $[e_{4},e_{1}]=e_{7}$,\ $[e_{5},e_{1}]=e_{7}+\alpha e_{8}$,\ $[e_{2},e_{4}]=e_{7}+\alpha e_{8}$,\ $[e_{4},e_{4}]=e_{8}$ ($\alpha\neq0;\ \alpha\sim -\alpha$)\\
\hline
\end{longtable}
\endgroup

\begingroup\footnotesize
\begin{longtable}{|p{2.0cm}|p{14.0cm}|}
\hline
\multicolumn{2}{|c|}{Central extensions of $\mu_5^4$; dimension $9$}\\ \hline
$\omega^{4,4}_{1}$ & $[e_{2},e_{1}]=e_3+e_{6}$,\ $[e_{4},e_{1}]=e_{7}$,\ $[e_{5},e_{1}]=e_{9}$,\ $[e_{2},e_{4}]=e_{9}$,\ $[e_{4},e_{4}]=e_{8}$\\ \hline
\end{longtable}
\endgroup

\end{document}